\begin{document}

% \mmbox enables macros to survive outside of $ ... $
\newcommand{\mmbox}[1]{\mbox{${#1}$}}
\newcommand{\affine}[1]{\mmbox{{\mathbb A}^{#1}}}
\newcommand{\Ann}[1]{\mmbox{{\rm Ann}({#1})}}
\newcommand{\caps}[3]{\mmbox{{#1}_{#2} \cap \ldots \cap {#1}_{#3}}}
\newcommand{\N}{{\mathbb N}}
\newcommand{\Z}{{\mathbb Z}}
\newcommand{\R}{{\mathbb R}}
\newcommand{\Q}{{\mathbb Q}}
\newcommand{\A}{{\mathcal A}}
\newcommand{\B}{{\mathcal B}}
\newcommand{\C}{{\mathbb C}}
\newcommand{\PP}{{\mathbb P}}
\newcommand{\kk}{{\mathbb K}}
\newcommand{\Tor}{\mathop{\rm Tor}\nolimits}
\newcommand{\Ext}{\mathop{\rm Ext}\nolimits}
\newcommand{\Hom}{\mathop{\rm Hom}\nolimits}
\newcommand{\im}{\mathop{\rm Im}\nolimits}
\newcommand{\Ass}{\mathop{\rm Ass}\nolimits}
\newcommand{\rank}{\mathop{\rm rank}\nolimits}
\newcommand{\codim}{\mathop{\rm codim}\nolimits}
\newcommand{\pdim}{\mathop{\rm pdim}\nolimits}
\newcommand{\supp}{\mathop{\rm supp}\nolimits}
\newcommand{\HF}{\mathrm{HF}}
\newcommand{\spn}{\mathrm{Span}_{\kk}}
\newcommand{\reg}{\mathrm{reg}}
\newcommand{\HP}{\mathrm{HP}}
\newcommand{\HS}{\mathrm{HS}}
\newcommand{\VVert}{\nu}
\newcommand{\In}{\mathrm{in}}
\newcommand{\coker}{\mathop{\rm coker}\nolimits}
\sloppy
\newtheorem{defn0}{Definition}[section]
\newtheorem{prop0}[defn0]{Proposition}
\newtheorem{conj0}[defn0]{Conjecture}
\newtheorem{thm0}[defn0]{Theorem}
\newtheorem{lem0}[defn0]{Lemma}
\newtheorem{corollary0}[defn0]{Corollary}
\newtheorem{example0}[defn0]{Example}
\newtheorem{remark0}[defn0]{Remark}

%%%%%%%%%%%%%
%Corey's macros
\newcommand{\p}{\mathbb{P}}
\newcommand{\V}{\mathbb{V}}
\newcommand{\T}{\mathbb{T}}
\newcommand{\n}{\txbf{n}}
\newcommand{\dra}{\dashrightarrow}
\newcommand{\txbf}{\textbf}
\newcommand{\sset}{\subseteq}
\newcommand{\fns}{\footnotesize}
\newcommand{\scs}{\scriptsize}
\newcommand{\mcC}{\mathcal{C}}
\newcommand{\mcW}{W_d}
\newcommand{\txc}{\textcolor}
\newcommand{\txcbl}{\txc{Blue}}
\newcommand{\mcd}{\mathcal{D}}
\newcommand{\mcV}{\mathcal{V}}
\newcommand{\mcA}{\mathcal{A}}
\newcommand{\sss}{\scriptscriptstyle}
\newcommand{\bn}{\txbf{n}}
%%%%%%%%%%%%%

\newenvironment{defn}{\begin{defn0}}{\end{defn0}}
\newenvironment{prop}{\begin{prop0}}{\end{prop0}}
\newenvironment{conj}{\begin{conj0}}{\end{conj0}}
\newenvironment{thm}{\begin{thm0}}{\end{thm0}}
\newenvironment{lem}{\begin{lem0}}{\end{lem0}}
\newenvironment{cor}{\begin{corollary0}}{\end{corollary0}}
\newenvironment{exm}{\begin{example0}\rm}{\end{example0}}
\newenvironment{rmk}{\begin{remark0}}{\end{remark0}}

% To adjust space between rows of arrays:
\newcommand{\msp}{\renewcommand{\arraystretch}{.5}}
\newcommand{\rsp}{\renewcommand{\arraystretch}{1}}

% and to create a short pmatrix:
\newenvironment{lmatrix}{\renewcommand{\arraystretch}{.5}\small
 \begin{pmatrix}} {\end{pmatrix}\renewcommand{\arraystretch}{1}}
\newenvironment{llmatrix}{\renewcommand{\arraystretch}{.5}\scriptsize
 \begin{pmatrix}} {\end{pmatrix}\renewcommand{\arraystretch}{1}}
\newenvironment{larray}{\renewcommand{\arraystretch}{.5}\begin{array}}
 {\end{array}\renewcommand{\arraystretch}{1}}

% this defines an arrow that can be made as long as desired
\def \a{{\mathrel{\smash-}}{\mathrel{\mkern-8mu}}
{\mathrel{\smash-}}{\mathrel{\mkern-8mu}} {\mathrel{\smash-}}{\mathrel{\mkern-8mu}}}

\title[Geometry of Wachspress surfaces]%
{Geometry of Wachspress surfaces}
\author[Corey Irving]{Corey Irving}
\thanks{Irving supported by Texas Advanced Research Program 010366-0054-2007}
\address{Mathematics Department, Santa Clara University, Santa Clara CA 95053}
\email{\href{mailto:cfirving@scu.edu}{cfirving@scu.edu}}
\urladdr{\href{http://www.math.scu.edu/~cirving/}%
{http://www.math.scu.edu/\~{}cirving}}

\author[Hal Schenck]{Hal Schenck}
\thanks{Schenck supported by NSF 1068754, NSA H98230-11-1-0170}
\address{Mathematics Department,
University of Illinois Urbana-Champaign, Urbana, IL 61801}
\email{\href{mailto:schenck@math.uiuc.edu}{schenck@math.uiuc.edu}}
\urladdr{\href{http://www.math.uiuc.edu/~schenck/}%
{http://www.math.uiuc.edu/\~{}schenck}}

\subjclass[2000]{13D02, 52C35, 14J26, 14C20} 
\keywords{Barycentric coordinates, Wachspress variety, Rational surface}

\begin{abstract}
\noindent Let $P_d$ be a convex polygon with $d$ vertices. 
The associated Wachspress surface $W_d$ is a fundamental object 
in approximation theory, defined as the image of the rational map 
\[
\mathbb{P}^2 \stackrel{w_d}{\longrightarrow} \mathbb{P}^{d-1},
\]
determined by the Wachspress barycentric coordinates for $P_d$. 
We show $w_d$ is a regular map on a blowup 
$X_d$ of $\mathbb{P}^2$ and if $d>4$ is given by a very ample divisor 
on $X_d$, so has a smooth image $W_d$. We determine
generators for the ideal of $W_d$, and prove that in
graded lex order, the initial ideal of $I_{W_d}$ is given by a 
Stanley-Reisner ideal. As a consequence, we show 
that the associated surface is arithmetically Cohen-Macaulay, of 
Castelnuovo-Mumford regularity two, and determine all the graded
betti numbers of $I_{W_d}$. 
\end{abstract}
\maketitle
\vskip -.3in
\section{Introduction}\label{sec:one}
Introduced by M\"{o}bius \cite{mob} in 1827, barycentric coordinates for
triangles appear in a host of applications. Recent work in approximation
theory has shown that it is also useful to define barycentric
coordinates for a convex polygon $P_d$ with $d \ge 4$ vertices (a $d$-gon). 
The idea is as follows: 
to deform a planar shape, first place the shape inside a control polygon.
Then move the vertices of the control polygon, and use barycentric coordinates 
to extend this motion to the entire shape. 

For a $d$-gon with $d \ge 4$, barycentric coordinates were defined by 
Wachspress \cite{wachs} in his work on finite elements; these coordinates 
are rational functions depending on the vertices $\nu(P_d)$ of $P_d$. In 
\cite{WarrUniq}, Warren shows that Wachspress' coordinates are 
the unique rational barycentric coordinates of minimal degree. 
The Wachspress coordinates define a rational map $w_d$ on $\PP^2$, whose value
at a point $p \in P_d$ is the $d$-tuple of barycentric coordinates
of $p$. The closure of the image of $w_d$ is the Wachspress surface 
$W_d$, first defined and studied by Garc{\i}a--Puente and 
Sottile \cite{gp} in their work on linear precision.

In Definition~\ref{theLinearforms} we fix linear forms ${\ell}_i$ which 
are positive inside $P_d$ and vanish on an edge. % of $P_d$. 
Let $A = {\ell}_1 \cdots {\ell}_d$, $Z$ be the ${d \choose 2}$ singular 
points of $\V(A)$ and $Y = Z \setminus \VVert(P_d)$. We call $Y$ the \emph{external vertices}
of $P_d$, and show that $w_d$ has basepoints only at $Y$. Let $X_d$ be the blowup of $\mathbb{P}^2$ at $Y$. 
In \S 2, we prove that $W_d$ is the image of $X_d$, embedded 
by a certain divisor $D_{d-2}$ on $X_d$. The global sections of $D_{d-2}$ 
have a simple interpretation in terms of the edges $\V({\ell}_i)$ of $P_d$: 
we prove that
\[
H^0(\mathcal{O}_{X_d}(D_{d-2})) \mbox{ has basis } \{ {\ell}_3 \cdots {\ell}_d, {\ell}_1 {\ell}_4 \cdots {\ell}_d, \ldots,  {\ell}_2 \cdots {\ell}_{d-1} \}.
\]
We show that $D_{d-2}$ is very ample if $d>4$, hence $W_d \subseteq \PP^{d-1}$ is a smooth surface.
%, and Riemann-Roch yields the numerical invariants of $I_{W_d}$.
\subsection{Statement of main results}
For a $d$-gon $P_d$ with $d \ge 4$
\begin{enumerate}
\item We give explicit generators for $I_{W_d} \subseteq S = \kk[x_1,\ldots, x_d]$.
\item We determine $\In_{\prec}(I_{W_d})$, where $\prec$ is graded lex order.
\item We prove $\In_{\prec}(I_{W_d})$ is the Stanley-Reisner ideal of a 
graph $\Gamma$.
\item We prove that $S/I_{W_d}$ is Cohen-Macaulay, and $\reg(S/I_{W_d})=2$.
\item We determine the graded betti numbers of $S/I_{W_d}$.
\end{enumerate}
In \S 1.2 we give some quick background on geometric modeling, 
and in \S 1.3 we do the same for algebraic geometry (in particular, 
we define all the terms above). Our strategy runs as follows. 
In \S 2, we study $I_{W_d}$ by blowing up $\mathbb{P}^2$ at the external vertices. Define a divisor 
\[
D_{d-2} = (d-2)E_0 - \sum\limits_{p \in Y}E_p
\]
on $X_d$, where $E_0$ is the pullback of a line and $E_p$ is the 
exceptional fiber over $p$. We show that $D_{d-2}$ is
very ample, and that $I_{W_d}$ is the ideal of the image of 
\[
X_d \longrightarrow \mathbb{P}(H^0(D_{d-2})).
\]
Riemann-Roch then yields the Hilbert polynomial of $S/I_{W_d}$.

In \S 3 and \S 4 we find distinguished sets of quadrics and cubics 
vanishing on $W_d$, and use them to generate a subideal $I(d) \subseteq I_{W_d}$. 
In \S 5 we tie everything together, showing that in graded lex
order, $I_\Gamma(d) \subseteq \In_{\prec}I(d)$, where $I_\Gamma(d)$ is the 
Stanley-Reisner ideal of a certain graph. Using results on
flat deformations and an analysis of associated primes, we prove
\[
I_\Gamma(d) = \In_{\prec}(I(d)).
\]
The description in terms of the Stanley-Reisner ring yields the Hilbert series
for $S/I_\Gamma(d)$. We prove that $S/I_\Gamma(d)$ is Cohen-Macaulay and 
has Castelnuovo-Mumford regularity two, and 
it follows from uppersemicontinuity that the same is true for $S/I(d)$.
The differentials on the quadratic generators of $I_\Gamma(d)$ turn out to be 
easy to describe, and combining this with the regularity bound and 
knowledge of the Hilbert series yields the graded betti numbers for 
$\In_{\prec}(I(d))$. 

Finally, we show that $I(d)$ has no linear syzygies on its 
quadratic generators, which allows us to prune the resolution of $\In_{\prec}(I(d))$ to obtain the graded betti numbers of $I(d)$. Comparing Hilbert polynomials
shows that up to saturation
\[
S/I(d) = S/I_{W_d}.
\]
Since $I_{W_d}$ is prime, it is saturated, and a short exact sequence argument shows that $S/I(d)$ is also saturated,
concluding the proof.

\subsection{Geometric Modeling Background}
Let $P_d$ be a $d$-gon with vertices $v_1,\dots,v_d$ and indices taken
modulo $d$.
\begin{defn} 
\label{bcdefinition}
Functions $\{\beta_i:P_d\to \R \mid 1\leq i\leq d\}$ are \emph{barycentric
coordinates} if for all $p \in P_d$:
\begin{center}
\begin{tabular}{lcr}
$1.\ \ \beta_i(p)\geq 0$\ \ \ \ \ \ \ \ \ &
$2.\ \ \displaystyle{p=\sum_{i=1}^d\beta_i(p)v_i}$ & \ \ \ \ \ \ \
\ \ \ 
$3.\ \ \displaystyle{\sum_{i=1}^d\beta_i(p)=1}.$
\end{tabular}
\end{center}
\end{defn}
Wachspress coordinates have a geometric description in terms of areas of
subtriangles of the polygon. Let $A(a,b,c)$ denote the area of
the triangle with vertices $a, b, $ and $c$.  For $1\leq j\leq d$ set
$\alpha_j:=A(v_{j-1},v_j,v_{j+1})$
and $A_j:=A(p,v_j,v_{j+1})$.
\begin{defn} \label{wachsDefinition}
For $1\leq i\leq d$, the functions 
$$\beta_i=\frac{b_i}{\sum_{j=1}^d b_j}, \mbox{ where }\displaystyle{b_i=\alpha_i\prod_{j\neq i-1,i}A_j}$$ are Wachspress barycentric coordinates for the $d$-gon $P_d$, see Figure \ref{wachspicture}. 
\end{defn}
\vspace{10pt}
\begin{figure}[htb]
\centering
\psfrag{vi}{\Large$v_i$}
\psfrag{v_j}{\Large$v_j$}
\psfrag{v_j+1}{\Large$v_{j+1}$}
\psfrag{x}{\Large$p$}
\psfrag{A_j}{\Large$A_j$}
\psfrag{ai}{\Large$\alpha_i$}
\includegraphics[scale=.30]{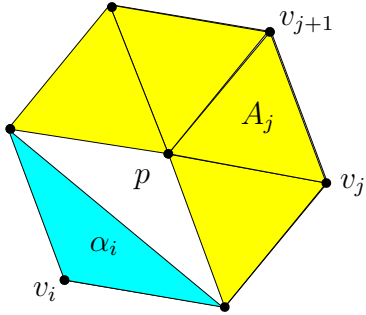}
%  wachspicture.eps: 0x0 pixel, 300dpi, 0.00x0.00 cm, bb=0 0 433 396
 \caption{Wachspress coordinates for a polygon}
\label{wachspicture}
\end{figure}
We embed $P_d$ in the plane $z=1 \subseteq \R^3$,  
and form the cone with ${\bf 0} \in \R^3$. Explicitly, to each vertex 
$v_i \in \nu(P_d)$ we associate the ray ${\bf v}_i := (v_i,1) \in \R^3$.
Let ${\bf P}_d$ denote the cone generated by the rays ${\bf v}_i$, and 
$\nu({\bf P}_d):= \{{\bf v}_i | v_i \in \nu(P_d)\}$. The cone over the edge
$[v_i,v_{i+1}]$ corresponds to a facet of ${\bf P}_d$, with normal 
vector $\bn_i:={\bf v}_i\times {\bf v}_{i+1}$. We redefine 
$\alpha_j$ and $A_j$ to be the determinants $|{\bf v}_{j-1} {\bf v}_j {\bf v}_{j+1}|$ and $|{\bf v}_j {\bf v}_{j+1} {\bf p}|$, where ${\bf p}=(x,y,z)$.
This scales the $b_i$ by a factor of $2$, so leaves the $\beta_i$ unchanged,
save for homogenizing the $A_j$ with respect to $z$, and allows us to define 
Wachspress coordinates for non-convex polygons, although Property 1 of barycentric coordinates fails when $P_d$ is non-convex. 
\begin{defn}\label{theLinearforms}
${\ell}_j:=A_j=\bn_j\cdot {\bf p}=|{\bf v}_j {\bf v}_{j+1} {\bf p}|.$
\end{defn}
The ${\ell}_j$ are homogeneous linear forms in $(x,y,z)$, and vanish 
on the cone over the edge $[v_j,v_{j+1}]$. We use Theorem~\ref{independent} 
below, but Warren's proof does not require convexity. Our results 
hold over an arbitrary field $\kk$, as long as no three of the 
lines $\V(\ell_i) \subseteq \mathbb{P}^2$ meet at a point. 
For Condition 1 of Definition~\ref{bcdefinition} to make 
sense, $\kk$ should be an ordered field.
\begin{defn}
The \emph{dual cone} to ${\bf P}_d$ is the cone spanned by the normals $\bn_1,\dots,
\bn_d$ and is denoted ${\bf P}_d^*$.
\end{defn}
Triangulating $P_d$ yields a triangulation of ${\bf P}_d$, and the volume of the parallelepiped $S$ spanned by vertices $\{{\bf v}_i,{\bf v}_j,{\bf v}_k, {\bf 0}\}$ is $a_S=|{\bf v}_i {\bf v}_j {\bf v}_k|$.
\begin{defn}
Let C be a cone defined by a polygon $P_d$ and $T(C)$ a triangulation of $C$ obtained from a triangulation of $P_d$ as above. The \emph{adjoint} of $C$ is 
$$\mcA_{T(C)}({\bf p})=\sum_{S\in T(C)}a_S\prod_{{\bf v}\in \nu({\bf P}_d) \setminus \nu(S)}({\bf v}\cdot {\bf p}) \in \kk[x,y,z]_{d-3}.$$
\end{defn}
\begin{thm}$($Warren \cite{WarrWach}$)$\label{independent} $\mcA_{T(C)}({\bf p})$ is independent of the triangulation $T(C)$.
\end{thm}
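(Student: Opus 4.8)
The plan is to reduce independence from $T(C)$ to invariance under a single local move, and then to a determinantal identity in $\R^3$. First I would invoke the classical fact that any two triangulations of the polygon $P_d$ using only its vertices are connected by a finite sequence of diagonal flips: a flip selects an internal diagonal, regards the two triangles sharing it as a quadrilateral, and swaps that diagonal for the other one. Since the flip graph on triangulations of a convex polygon is connected, it suffices to show that $\mathcal{A}_{T(C)}({\bf p})$ is unchanged by one flip.

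Next I would observe that a flip is purely local. Labelling the quadrilateral's vertices ${\bf a},{\bf b},{\bf c},{\bf d}$ in cyclic order, the flip replaces the triangles $\{{\bf a},{\bf b},{\bf c}\},\{{\bf a},{\bf c},{\bf d}\}$ by $\{{\bf a},{\bf b},{\bf d}\},\{{\bf b},{\bf c},{\bf d}\}$ and leaves every other triangle of $T(C)$ untouched. Each unchanged triangle contributes an identical summand to $\mathcal{A}$, so only the four triangles above matter. Each of these omits, among the four quadrilateral vertices, exactly the one opposite to it, whereas every vertex of $\nu({\bf P}_d)$ outside the quadrilateral is omitted by all four; factoring out the common product $Q = \prod_{{\bf v}\notin\{{\bf a},{\bf b},{\bf c},{\bf d}\}}({\bf v}\cdot{\bf p})$ reduces the claim to the scalar identity
\[
|{\bf a}{\bf b}{\bf c}|\,({\bf d}\cdot{\bf p}) + |{\bf a}{\bf c}{\bf d}|\,({\bf b}\cdot{\bf p}) = |{\bf a}{\bf b}{\bf d}|\,({\bf c}\cdot{\bf p}) + |{\bf b}{\bf c}{\bf d}|\,({\bf a}\cdot{\bf p}),
\]
where, orienting every triangle compatibly with the cyclic order of the polygon, each $a_S = |{\bf v}_i{\bf v}_j{\bf v}_k|$ is read as the signed $3\times 3$ determinant with arguments in increasing cyclic order.

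This identity I would derive from the fact that any four vectors in $\R^3$ are linearly dependent, with the dependence given by Cramer's rule as the alternating $3\times 3$ minors:
\[
|{\bf b}{\bf c}{\bf d}|\,{\bf a} - |{\bf a}{\bf c}{\bf d}|\,{\bf b} + |{\bf a}{\bf b}{\bf d}|\,{\bf c} - |{\bf a}{\bf b}{\bf c}|\,{\bf d} = {\bf 0}.
\]
Taking the dot product with ${\bf p}$ and rearranging gives exactly the displayed equality of local contributions. Since this is a polynomial identity in the coordinates of the five vectors, it holds over an arbitrary field $\kk$, so no hypothesis beyond that of the theorem is required.

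The computation itself is a single line, so the only genuine work is the bookkeeping, and this is where I expect the argument could otherwise slip. The point to watch is orientation: the $a_S$ are defined as unsigned parallelepiped volumes, and the cofactor signs of the Cramer relation must be matched against this convention. For a convex polygon with vertices listed counterclockwise, all four determinants above are positive, and the alternating signs of the dependence relation are precisely the ones needed once the two negatively-signed terms are carried across the equation. I would therefore verify this sign accounting explicitly, together with the (standard) connectivity of the flip graph and the locality of a single flip, as the three places the proof must be pinned down.
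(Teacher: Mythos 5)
Your proof is correct, but there is an important point of context: the paper does not prove this statement at all --- it is quoted from Warren \cite{WarrWach}, and the text explicitly defers to Warren's proof (noting only that it does not require convexity). So there is no internal proof to compare against; what you have supplied is a self-contained argument for a black-box citation. Your argument itself is sound: flip-connectivity of triangulations of a convex polygon is classical, a flip is local, the common factor $\prod_{{\bf v}\notin\{{\bf a},{\bf b},{\bf c},{\bf d}\}}({\bf v}\cdot{\bf p})$ pulls out of the four affected summands, and the resulting local identity follows from dotting the Cramer relation
\[
|{\bf b}{\bf c}{\bf d}|\,{\bf a} - |{\bf a}{\bf c}{\bf d}|\,{\bf b} + |{\bf a}{\bf b}{\bf d}|\,{\bf c} - |{\bf a}{\bf b}{\bf c}|\,{\bf d} = {\bf 0}
\]
with ${\bf p}$; the sign accounting you flag does come out right, since with all determinants taken in increasing cyclic order every $a_S$ is positive for a convex polygon, and moving the two negative terms across the equation gives exactly the equality of old and new local contributions. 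It is worth observing that this local identity is precisely the computational primitive the paper itself uses later: in Case 2 of the proof of Theorem~\ref{cubicgens}, the vanishing of the difference of two adjoints of a quadrilateral is verified by noting that each coefficient of ${\bf p}$ is a $4\times 4$ determinant with a repeated row --- which is your Cramer relation, there applied to the dual normals $\bn_i$ rather than to the ${\bf v}_i$. So your proof shows, in effect, that the full theorem reduces by flip connectivity to the quadrilateral computation the paper carries out by hand. One small caveat: since the paper wants the result also for non-convex polygons and over an arbitrary field, your flip-connectivity step should be read combinatorially (triangulations of the cyclic vertex order, where connectivity is the classical associahedron statement) and $a_S$ should be read throughout as the signed determinant in cyclic order, as you propose; with that reading the argument goes through verbatim without any convexity or ordered-field hypothesis.
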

\subsection{Algebraic Geometry Background}
Next, we review some background in algebraic geometry, referring
to \cite{eis1}, \cite{hart}, \cite{sch} for more detail. Homogenizing the 
numerators of Wachspress coordinates yields our main object of study:
\begin{defn} The Wachspress map defined by a polygon $P_d$ is the rational map 
$\PP^2\stackrel{w_d}{\dashrightarrow}\PP^{d-1}$, given on the open set 
$U_{z \ne 0} \subseteq \PP^2$ by $(x,y) \mapsto (b_1(x,y), \ldots, b_d(x,y))$. The Wachspress variety $W_d$ is the closure of the image of $w_d$. 
\end{defn} 
The polynomial ring $S= \kk[x_1,\ldots,x_d]$ 
is a graded ring: it has a direct sum decomposition
into homogeneous pieces. A finitely generated graded $S$-module $N$ 
admits a similar decomposition; if $s \in S_p$ and $n \in N_q$ then
$s\cdot n \in N_{p+q}$. In particular, each $N_q$ is a $S_0 = \kk$-vector space.
\begin{defn}\label{SmodN}
For a finitely generated graded $S$-module $N$, the Hilbert series
$HS(N,t) = \sum \dim_{\kk}N_q t^q$.
\end{defn}
\begin{defn}A free resolution for an $S$-module $N$ is an exact sequence 
\[
\mathbb{F} : \cdots \rightarrow F_i \stackrel{d_i}{\rightarrow} F_{i-1}\rightarrow \cdots \rightarrow F_0  \rightarrow N \rightarrow 0,
\]
where the $F_i$ are free $S$-modules. 
\end{defn}
If $N$ is graded, then the $F_i$ are also graded, so letting 
$S(-m)$ denote a rank one free module generated in degree $m$, we may 
write $F_i = \oplus_j S(-j)^{a_{i,j}}$. By the Hilbert syzygy theorem \cite{eis1} a finitely generated, graded $S$-module $N$ has 
a free resolution of length at most $d$, with all the $F_i$ of finite 
rank. 
\begin{defn} For a finitely generated graded $S$-module 
$N$, a free resolution is minimal if for each $i$,
$\im(d_i) \subseteq \mathfrak{m}F_{i-1}$, where 
$\mathfrak{m}=\langle x_1,\ldots,x_d\rangle$. 
The graded betti numbers of $N$ are the $a_{i,j}$ which appear 
in a minimal free resolution, and the Castelnuovo-Mumford 
regularity of $N$ is $\max_{i,j} \{a_{i,j}-i\}$. 
\end{defn}
While the differentials which appear in a minimal free resolution 
of $N$ are not unique, the ranks and degrees of the free modules which 
appear are unique. The graded betti numbers are displayed in 
a {\em betti table}. Reading this table right and down, starting 
at $(0,0)$, the entry $b_{ij} : = a_{i,i+j}$, and the regularity of $N$ is the 
index of the bottommost nonzero row in the betti table for $N$. 
\begin{exm}\label{braidex1}
In Examples 2.9 and 3.11 of \cite{gp} it is shown that $I_{W_6}$ is
generated by three quadrics and one cubic. The variety 
$\mathbb{V}({\ell}_1 \cdots {\ell}_6)$ of the edges of $P_6$ has 
${6 \choose 2}=15$ singular points, of which six 
are vertices of $P_6$, and $S/I_{W_6}$ has betti table
\begin{small}
$$
\vbox{\offinterlineskip %\baselineskip=15pt
\halign{\strut\hfil# \ \vrule\quad&# \ &# \ &# \ &# \ &# \ &# \ &# \ &# \ &# \ &#   \cr
total&1&4&6&3\cr \noalign {\hrule} 0&1 &--&--& -- \cr 1&--&3 &--& --
\cr 2&--&1 &6 &3 \cr \noalign{\bigskip} \noalign{\smallskip} }}
$$
\end{small}
For example, $b_{1,2} = a_{1,3} = 1$ reflects that $I_{W_6}$ 
has a cubic generator, and $\reg(S/I_{W_6})=2$. The 
Hilbert series can be read off the betti table:
\[
\HS(S/I_{W_6},t) = \frac{1 -3t^2-t^3+6t^4-3t^5}{(1-t)^6} = \frac{1 + 3t+3t^2}{(1-t)^3}.
\]
Theorem~\ref{BettiTable} gives a complete description of the betti table of $S/I_{W_d}$.
\end{exm}
\section{$H^0(D_{d-2})$ and the Wachspress surface}\label{sec:two}
\subsection{Background on blowups of $\PP^2$}
Fix points  $p_1, \ldots p_k \in \PP^2$, and let 
\begin{equation}\label{BlowP2}
X \stackrel{\pi}{\longrightarrow}\PP^2
\end{equation} 
be the blow up of ${\PP}^2$ at these points. Then $Pic(X)$ is
generated by the exceptional curves $E_i$ over the points $p_i$, 
and the proper transform $E_0$ of a line in ${\PP}^2$. 
A classical geometric problem asks for a relationship between 
numerical properties of a divisor $D_m = mE_0-\sum a_iE_i$ on $X$, 
and the geometry of $$X \stackrel{\phi}{\longrightarrow} \PP(H^0(D_m)^\vee).$$
First, some basics. Let $m$ and $a_i$ be non-negative, 
let $I_{p_i}$ denote the ideal of a point $p_i$, and define 
\begin{equation}\label{idealJ}
J = \bigcap\limits_{i=1}^k I_{p_i}^{a_i} \subseteq \kk[x,y,z]=R.
\end{equation}
Then $H^0(D_m)$ is isomorphic to the $m^{th}$ graded piece $J_m$ 
of $J$ (see \cite{h}). In \cite{DG}, Davis and 
Geramita show that if 
$\gamma(J)$ denotes the smallest degree $t$ such that $J_t$
defines $J$ scheme theoretically, then $D_m$ is very ample 
if $m > \gamma(J)$, and if $m=\gamma(J)$, then $D_m$ is very ample 
iff $J$ does not contain $m$ collinear points, counted with multiplicity. 
Note that $\gamma(J) \le reg(J)$. 

\subsection{Wachspress surfaces}
For a polygon $P_d$, fix defining linear forms ${\ell}_i$ as in 
Definition~\ref{theLinearforms} and let $A: ={\ell}_1\cdots {\ell}_d$; the 
edges of $P_d$ are defined by the $\V({\ell}_i)$. 
Let $Z$ denote the ${d \choose 2}$ 
singular points of $\V(A)$ and $Y = Z \setminus \VVert(P_d)$. Finally, $X_d$ will be the blowup of $\mathbb{P}^2$ at $Y$. 
We study the divisor
\[
D_{d-2} = (d-2)E_0 - \sum\limits_{p \in Y}E_p
\]
on $X_d$. First, some preliminaries.
\begin{defn}
Let $L$ be the ideal in $R=\kk[x,y,z]$ given by
\[
L  =  \langle {\ell}_3 \cdots {\ell}_d, {\ell}_1 {\ell}_4 \cdots {\ell}_d, \ldots,  {\ell}_2 \cdots {\ell}_{d-1} \rangle =  \langle A/{\ell}_1 {\ell}_2, A/{\ell}_2 {\ell}_3, \ldots, A/{\ell}_d {\ell}_1 \rangle,
\]
where $A = \prod_{i=1}^d {\ell}_i$.
\end{defn}
\noindent For any variety $V$, we use $I_V$ to denote the ideal of polynomials vanishing on $V$.  
\begin{lem}\label{schemeT}
The ideals $L$ and $I_Y$ are equal up to saturation at $\langle x,y,z \rangle$.
\end{lem}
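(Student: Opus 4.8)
The plan is to prove the stronger statement $L^{\mathrm{sat}} = I_Y$; since $I_Y$ is the homogeneous ideal of a finite reduced set of points it is already saturated, so ``equal up to saturation at $\langle x,y,z\rangle$'' reduces to this. The saturation of a homogeneous ideal is the ideal of global sections of its sheafification, so $L^{\mathrm{sat}}=I_Y$ holds exactly when the ideal sheaves $\widetilde{L}$ and $\widetilde{I_Y}$ on $\PP^2$ coincide; as an ideal sheaf is determined by its stalks, it suffices to check $L\,\mathcal{O}_{\PP^2,q}=I_Y\,\mathcal{O}_{\PP^2,q}$ at every point $q\in\PP^2$. Throughout I would use the standing hypothesis that no three of the lines $\V(\ell_i)$ are concurrent, so each singular point of $\V(A)$ lies on exactly two of them.

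The first step is a purely combinatorial vanishing analysis of the generators $g_i:=A/(\ell_i\ell_{i+1})=\prod_{k\ne i,i+1}\ell_k$. At a point $q=\V(\ell_a)\cap\V(\ell_b)$ with $a\ne b$, the only $\ell_k$ vanishing are those with $k\in\{a,b\}$, so $g_i(q)\ne 0$ iff $\{a,b\}\subseteq\{i,i+1\}$, which for two two-element sets means $\{a,b\}=\{i,i+1\}$, a consecutive pair. The consecutive pairs index exactly the vertices $\nu(P_d)$, and the non-consecutive pairs index exactly $Y$. Hence every $g_i$ vanishes on all of $Y$ (so $L\subseteq I_Y$), while at any point outside $Y$---a vertex, a point on a single line, or a point off all lines---at least one $g_i$ is nonzero. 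Thus $\V(L)=Y$ as sets, and at every $q\notin Y$ the stalks of $\widetilde L$ and $\widetilde{I_Y}$ agree, both being the unit ideal.

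The crux is the local computation at a point $q\in Y$, say $q=\V(\ell_a)\cap\V(\ell_b)$ with $\{a,b\}$ non-consecutive. In $\mathcal{O}_{\PP^2,q}$ the forms $\ell_k$ with $k\ne a,b$ are units, while $\ell_a,\ell_b$ form a regular system of parameters (the two lines meet transversally), so each $g_i$ is a unit times a monomial in $\ell_a,\ell_b$ whose exponents record whether $a$, resp.\ $b$, lies in $\{i,i+1\}$. I would then exhibit two generators isolating the two parameters: taking $i=b$ omits $\ell_b,\ell_{b+1}$ but retains $\ell_a$ (non-consecutiveness gives $a\ne b+1$), so $g_b$ is a unit multiple of $\ell_a$; symmetrically $g_a$ is a unit multiple of $\ell_b$. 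Therefore $L\,\mathcal{O}_{\PP^2,q}=(\ell_a,\ell_b)=\mathfrak{m}_q=I_Y\,\mathcal{O}_{\PP^2,q}$, the last equality because $Y$ is reduced.

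Combining the two local pictures, $\widetilde{L}$ and $\widetilde{I_Y}$ have equal stalks at every point of $\PP^2$, whence $L^{\mathrm{sat}}=I_Y^{\mathrm{sat}}=I_Y$. I expect the main obstacle to be precisely this local step: it is immediate that $\V(L)=Y$ set-theoretically (equivalently $\sqrt{L}=I_Y$), but upgrading this to scheme-theoretic equality requires producing, at each point of $Y$, two generators that recover \emph{both} local parameters rather than only their product or a single one. This bookkeeping---and hence the whole lemma---rests essentially on the no-three-lines-concurrent hypothesis, which simultaneously guarantees the transversality of the two lines through $q$ and that every other $\ell_k$ is a local unit there.
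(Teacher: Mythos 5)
Your proof is correct and takes essentially the same route as the paper: localize at each point, note that every $\ell_k$ not passing through it becomes a unit, and check that at each $q=\V(\ell_a)\cap\V(\ell_b)\in Y$ the generators $A/(\ell_a\ell_{a+1})$ and $A/(\ell_b\ell_{b+1})$ recover the two local parameters, so both ideals localize to $\langle \ell_a,\ell_b\rangle$. Your write-up is in fact slightly more complete than the paper's, which records only the computation at the points of $Y$ and leaves implicit the check that both ideals have unit stalks everywhere else.
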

\begin{proof}
Being equal up to saturation at $\langle x,y,z \rangle$ means that the
localizations at any associated prime except $\langle x,y,z \rangle$ are equal.
The ideal $I_p$ of a point $p$ is a prime ideal. Recall that the
localization of a ring $T$ at a prime ideal $\mathfrak{p}$ is a new ring 
$T_{\mathfrak p}$ whose elements are of the form $f/g$, with $f,g \in T$ and 
$g \not\in \mathfrak{p}$. 
Localize $R$ at $I_p$, where $p \in Y$. Then in $R_{I_p}$, ${\ell}_i$ is a unit if $p \not \in \V({\ell}_i)$. Without loss of generality, suppose forms ${\ell}_1$ and ${\ell}_2$ 
vanish on $p$ (note that all points of $Y$ are intersections of exactly two lines), and the remaining forms do not. Thus, $L_{I_p} = \langle {\ell}_1, {\ell}_2 \rangle = (I_Y)_{I_p}$.
\end{proof}
\noindent The ideal $L$ is not saturated.
\begin{lem}\label{gensL}
$I_Y$ is generated by one form $F$ of degree $d-3$ and $d-3$ 
forms of degree $d-2$. Hence a basis for $L_{d-2}$ consists of $F \cdot x, F \cdot y, F \cdot z$ and the $d-3$ forms. 
\end{lem}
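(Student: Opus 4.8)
The plan is to pin down the Hilbert function of $R/I_Y$ in the two critical degrees $d-3$ and $d-2$, exhibit explicit spanning sets there, and then use a regularity bound to rule out higher-degree generators. First I would record the numerics: since each of the $d$ lines meets the others in $d-1$ points, two of which are vertices, $Y$ consists of $|Y| = \binom{d}{2} - d = \frac{d(d-3)}{2} = \binom{d-1}{2} - 1$ points, with each line carrying exactly $d-3$ of them. In particular no curve of degree $\le d-4$ can pass through $Y$: on each $\V(\ell_i)$ it would meet $Y$ in $d-3$ points, more than its degree, forcing it to contain every $\ell_i$, hence $A=\prod\ell_i$, which is absurd. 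Thus $(I_Y)_t = 0$ for $t \le d-4$.

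Next I would compute $\dim (I_Y)_{d-2} = d$ by showing $Y$ imposes independent conditions in degree $d-2$. For each $p = \V(\ell_a)\cap\V(\ell_b)\in Y$, the form $A/(\ell_a\ell_b) = \prod_{j\ne a,b}\ell_j$ has degree $d-2$, vanishes at every other point of $Y$ (each such point lies on some $\ell_j$ with $j\ne a,b$), and is nonzero at $p$ precisely because no third line passes through $p$. These witnesses show the evaluation functionals are independent, so $\dim(I_Y)_{d-2} = \binom{d}{2} - |Y| = d$. The same bookkeeping forces $\dim(I_Y)_{d-3}=1$ once uniqueness is known: the crude bound $\dim(I_Y)_{d-3} \ge \binom{d-1}{2} - |Y| = 1$ is automatic, the unique (up to scale) such form being the adjoint $F=\mcA_{T(C)}$, so the content is the reverse inequality $\dim(I_Y)_{d-3}\le 1$.

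I expect this uniqueness of the adjoint to be the main obstacle, since $Y$ is highly special (its points lie on the $d$ lines) and general-position heuristics do not apply. I would prove it by induction on $d$, carrying along the auxiliary statement that the adjoint does not vanish at any vertex of $P_d$. Given two independent degree $d-3$ forms $F,G$ through $Y$, restriction to $\ell_1$ (which carries $d-3$ distinct points of $Y$, distinct as no three lines are concurrent) forces some nonzero combination $H = \mu F - \lambda G$ to be divisible by $\ell_1$; writing $H = \ell_1 H'$, the cofactor $H'$ has degree $d-4$ and vanishes on $Y \setminus (\V(\ell_1)\cap Y)$. This residual set is exactly the external vertices of the $(d-1)$-gon on $\ell_2,\dots,\ell_d$ together with the single extra point $\V(\ell_2)\cap\V(\ell_d)$, which is a vertex of that polygon. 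By the inductive hypothesis $H'$ is a multiple of the $(d-1)$-gon's adjoint, which does not vanish at that vertex, so $H'=0$ and $H=0$, a contradiction. The auxiliary vertex-avoidance statement propagates by the same mechanism in degree $d-5$: if the adjoint vanished at a vertex $\V(\ell_k)\cap\V(\ell_{k+1})$, restriction would force $\ell_k\ell_{k+1}\mid F$, and the cofactor would contradict the $(d-2)$-gon case. The base cases $d=4,5$ are checked directly using that no three lines are concurrent.

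Finally I would assemble the generators. Because the Hilbert function of $R/I_Y$ reaches its eventual value $|Y|$ already in degree $d-3$ (with $(I_Y)_{d-4}=0$), we get $\reg(R/I_Y) = d-3$, so $I_Y$ is generated in degrees $\le d-2$. In degree $d-3$ the single generator is $F$; since $R$ is a domain the products $F x, F y, F z$ are linearly independent elements of the $d$-dimensional space $(I_Y)_{d-2}$, and completing them to a basis supplies exactly $d-3$ further forms $G_1,\dots,G_{d-3}$, whence $I_Y = \langle F, G_1,\dots,G_{d-3}\rangle$. For the concluding assertion, evaluating a relation $\sum c_i\, A/(\ell_i\ell_{i+1}) = 0$ at the vertex $\V(\ell_i)\cap\V(\ell_{i+1})$ (where only the $i$-th term survives) shows the $d$ generators of $L$ are linearly independent, so $\dim L_{d-2} = d = \dim(I_Y)_{d-2}$; as $L\subseteq I_Y$ this yields $L_{d-2} = (I_Y)_{d-2}$, and therefore $F x, F y, F z, G_1,\dots,G_{d-3}$ is the desired basis of $L_{d-2}$.
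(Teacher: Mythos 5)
Your proof is correct, and its overall skeleton matches the paper's: show $(I_Y)_t=0$ for $t\le d-4$, show $\dim (I_Y)_{d-3}=1$, show $\dim (I_Y)_{d-2}=d$ with $L_{d-2}=(I_Y)_{d-2}$, then assemble the generators. At the two technical pivots, however, you argue genuinely differently. For $\dim(I_Y)_{d-2}=d$ you prove directly that $Y$ imposes independent conditions in degree $d-2$ via the witness forms $A/(\ell_a\ell_b)$, whereas the paper deduces this from uniqueness in degree $d-3$ together with stabilization of the Hilbert function of a finite point set. More substantially, for the crux --- uniqueness of the degree $d-3$ form --- the paper argues directly: no $\ell_i$ can divide a nonzero element of $(I_Y)_{d-3}$ (iterating the divisibility count around the polygon, the step it compresses into ``by symmetry''), hence such a form cannot vanish at any vertex $p$, hence for a putative second form $G$ the combination $F(p)G-G(p)F$ vanishes at $d-2$ collinear points, forcing a contradiction. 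You instead induct on $d$: restrict to $\ell_1$, factor it out, identify the residual set as the external vertices of the $(d-1)$-gon on $\ell_2,\dots,\ell_d$ plus one of its vertices, and carry along the invariant that the unique form avoids the vertices (propagated through the $(d-2)$-gon). The paper's route is shorter; yours is longer but makes fully explicit the two facts both proofs secretly rest on --- the line-divisibility claim and vertex-avoidance --- and yields non-vanishing of the adjoint at the vertices as a by-product. You are also more careful than the paper about why there are no generators in degree $\ge d-1$, invoking the standard regularity bound for point ideals, which the paper leaves implicit (it only surfaces later via Hilbert--Burch in Theorem~\ref{regL}). One point to make explicit in a final write-up: your induction passes through sub-``polygons'' that need not be convex, so the inductive statement should be phrased for cyclically labeled line arrangements with no three lines concurrent --- exactly the generality the paper's standing hypothesis already allows.
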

\begin{proof}
First, note that $I_Y$ cannot contain any form of degree $d-4$, since
$Y$ contains $d$ sets of $d-3$ collinear points. So the smallest degree of
a minimal generator for $I_Y$ is $d-3$. Since $Y$ consists of
${d-1 \choose 2}-1$ distinct points, and the space of forms of degree
$d-3$ has dimension ${d-1 \choose 2}$, there is at least one form $F$
of degree $d-3$ in $I_Y$. We claim that it is unique. To see this, first
note that no ${\ell}_i$ can divide $F$: by symmetry if one ${\ell}_i$ divides 
$F$, they all must, which is impossible for degree reasons.
Now suppose $G$ is a second form of degree $d-3$ 
in $I_Y$. Let $p \in \nu(P_d)$ and $\V({\ell}_i)$ be a line corresponding to 
an edge containing $p$. $F(p)$ must be nonzero, since if not $\V(F)$ would
contain $d-2$ collinear points of $\V({\ell}_i)$, forcing $\V(F)$ to contain
$\V({\ell}_i)$, a contradiction. This also holds for $G$. But in this case,
$F(p)G - G(p)F$ is a polynomial of degree $d-3$ vanishing at $d-2$ collinear
points, again a contradiction. So $F$ is unique (up to scaling), which 
shows that the Hilbert function satisfies
\[
\HF(R/L, d-3) = |Y|,
\]
so $\HF(R/L, t) = |Y|$ for all $t \ge d-3$ (see \cite{sch}). As the polynomials
$A/{\ell}_i{\ell}_{i+1}$ are linearly independent and there are the 
correct number, $L_{d-2}$ must be the degree $d-2$ component of $I_Y$.
\end{proof}
\begin{thm}\label{regL}
The minimal free resolution of $R/L$ is
\begin{small}
\[
0 \longrightarrow R(-d) \xrightarrow{d_3} R(-d+1)^d \xrightarrow{d_2}
R(-d+2)^d
\xrightarrow{\left[ \!\begin{array}{cccc}
\frac{A}{{\ell}_1{\ell}_2}& \frac{A}{{\ell}_2{\ell}_3} &\cdots & \frac{A}{{\ell}_d{\ell}_1}
\end{array}\! \right]}
 R \longrightarrow R/L  \longrightarrow 0, 
\]
\end{small}
\begin{small}
\[
\mbox{ where }d_2 = {\left[ \!
\begin{array}{cccccccc}
{\ell}_1  & 0          & \cdots & \cdots & 0          & 0    &     m_1   \\
-{\ell}_3 & {\ell}_2   & 0      & \cdots  & \vdots  & \vdots      & m_2       \\
0         &  -{\ell}_4 & \ddots & \ddots  & \vdots & \vdots       & \vdots       \\ 
\vdots    & 0          & \ddots & \ddots        &  {\ell}_{d-2} &0     & \vdots    \\ 
\vdots    & \vdots     &  \ddots      &  \ddots & -{\ell}_d &{\ell}_{d-1}& \vdots   \\ 
0         & \cdots     & \cdots &  0      & 0 &-{\ell}_1        & m_d
\end{array}\! \right]}
\]
\end{small}
and the $m_i$ are linear forms.
\end{thm}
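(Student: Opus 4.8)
The plan is to verify directly that the displayed complex is a \emph{minimal} free resolution, using the Buchsbaum--Eisenbud acyclicity criterion \cite{eis1}. Write $g_i = A/({\ell}_i{\ell}_{i+1})$ for the generators (indices mod $d$), so that $d_1 = [\,g_1 \cdots g_d\,]$ and $I_1(d_1) = L$. The first $d-1$ columns of $d_2$ are the syzygies $s_j = {\ell}_j\,\mathbf{e}_j - {\ell}_{j+2}\,\mathbf{e}_{j+1}$ for $j = 1,\dots,d-1$, and the last column is the wraparound syzygy (for instance $s_d = -{\ell}_2\,\mathbf{e}_1 + {\ell}_d\,\mathbf{e}_d$), whose entries are the linear forms $m_i$; finally $d_3 = ({\ell}_2,\dots,{\ell}_d,{\ell}_1)^{T}$.

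First I would check that this is a complex. The identity ${\ell}_j\,g_j = A/{\ell}_{j+1} = {\ell}_{j+2}\,g_{j+1}$ shows $d_1\, s_j = 0$, so $d_1 d_2 = 0$; and the cyclic relation $\sum_{j=1}^d {\ell}_{j+1}\, s_j = 0$ (its coefficient of $\mathbf{e}_k$ is ${\ell}_{k+1}{\ell}_k - {\ell}_k{\ell}_{k+1} = 0$) gives $d_2 d_3 = 0$. Injectivity of $d_3$ is immediate since $R$ is a domain, and $\coker d_1 = R/L$ because $\im d_1 = L$.

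Next I would record the expected ranks $r_1 = 1,\ r_2 = d-1,\ r_3 = 1$ and verify the grade hypotheses. Two are easy: $\mathrm{grade}(I_1(d_1)) = \mathrm{grade}(L) \ge 1$ since $L \ne 0$ in the domain $R$, and $I_1(d_3) = \langle {\ell}_1,\dots,{\ell}_d\rangle = \langle x,y,z\rangle$ because the edge forms do not share a common point and hence span the space of linear forms, so this ideal has grade $3$. The crux is $\mathrm{grade}(I_{d-1}(d_2)) \ge 2$; since $R$ is Cohen--Macaulay this is the statement that the locus $\{\,\mathrm{rank}\,d_2 \le d-2\,\}$ has codimension $\ge 2$.

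To control that locus I would analyze the cyclic bidiagonal matrix $d_2(q)$ pointwise. Writing $\lambda_i = {\ell}_i(q)$, the kernel equations read $\lambda_i w_i = \lambda_{i+1} w_{i-1}$ for all $i$. When all $\lambda_i \ne 0$ these force $w$ to be the vector $({\ell}_2,\dots,{\ell}_d,{\ell}_1)(q)$ up to scalar, so the kernel is one-dimensional and $\mathrm{rank}\,d_2 = d-1$ (in particular the generic rank equals $r_2$); a short case analysis shows the same holds when exactly one $\lambda_i$ vanishes, since the broken link in the cycle still leaves a connected chain with a single free parameter together with one forced-zero coordinate. Hence the rank can drop to $d-2$ only where at least two of the ${\ell}_i$ vanish, i.e. on $\bigcup_{i<j} \V({\ell}_i,{\ell}_j)$, a finite set of points in $\PP^2$ and thus of codimension $2$. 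This yields $\mathrm{grade}(I_{d-1}(d_2)) \ge 2$, completing the Buchsbaum--Eisenbud hypotheses and proving exactness. Finally, every entry of $d_1,d_2,d_3$ lies in $\langle x,y,z\rangle$ (the $g_i$ have degree $d-2 \ge 2$, the remaining entries are linear), so the resolution is minimal and the graded Betti numbers are as displayed. I expect the pointwise rank analysis of $d_2$ to be the main obstacle.
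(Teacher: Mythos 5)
Your proposal is correct, but it reaches the resolution by a genuinely different route than the paper. The paper never works with the cyclic syzygies directly: it first pins down the generators of the saturated ideal $I_Y$ (Lemma~\ref{gensL}), applies the Hilbert--Burch theorem to $R/I_Y$, lifts the resulting $d-3$ syzygies to $L=\langle f_1,\ldots,f_{d-3},xF,yF,zF\rangle$, adjoins the three Koszul syzygies on $\{xF,yF,zF\}$ and the single Koszul second syzygy, uses the Hilbert function of $R/L$ in degree $d-1$ to conclude these are \emph{all} the linear first syzygies, and only then invokes Buchsbaum--Eisenbud (with the check merely asserted); the displayed matrix $d_2$ is recovered at the very end by an unspecified change of basis, which is exactly why the last column $(m_1,\ldots,m_d)$ is left implicit there. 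You instead take the canonical generators $g_i=A/(\ell_i\ell_{i+1})$ at face value, write every differential explicitly --- identifying the last column as the wraparound syzygy, so $m_1=-\ell_2$, $m_d=\ell_d$, $m_i=0$ otherwise, and $d_3=(\ell_2,\ldots,\ell_d,\ell_1)^T$ --- and you actually carry out the Buchsbaum--Eisenbud verification, the crux being your pointwise kernel analysis of the cyclic bidiagonal matrix showing the rank can drop below $d-1$ only where two of the $\ell_i$ vanish (you should remark that distinct edges give pairwise non-proportional forms, so each $\V(\ell_i,\ell_j)$ genuinely has codimension two). Your route buys self-containedness (no Hilbert--Burch, no input from Lemmas~\ref{schemeT} and~\ref{gensL}), an explicit form for the $m_i$ and for $d_3$ (a sharpening of the statement), and a written-out version of the grade check the paper leaves to the reader; the paper's route buys a conceptual explanation of where the resolution comes from --- the geometry of the point set $Y$ --- and reuses lemmas it needs elsewhere anyway.
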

\begin{proof}
By Lemma~\ref{gensL} the generators of $I_Y$ are known. Since $I_Y$ is
saturated, the Hilbert-Burch theorem implies that the free resolution 
of $R/I_Y$ has the form
\[
0 \longrightarrow R(-d+1)^{d-3} \longrightarrow R(-d+3)\oplus R(-d+2)^{d-3}\longrightarrow R \longrightarrow R/I_Y \longrightarrow 0.
\]
Writing $I_Y$ as $\langle f_1,\ldots,f_{d-3},F \rangle$ and $L$ as 
 $\langle f_1,\ldots,f_{d-3},xF,yF,zF \rangle$, the task is to understand
the syzygies on $L$ given the description above of the syzygies on 
$I_Y$. From the Hilbert-Burch resolution, any minimal syzygy on 
$I_Y$ is of the form 
\[
\sum g_if_i +qF =0,
\]
where $g_i$ are linear and $q$ is a quadric (or zero). Since 
\[
qF = g_1xF+g_2yF+g_3zF \mbox{ with }g_i\mbox{ linear,}
\]
all $d-3$ syzygies on $I_Y$ lift to give linear syzygies on $L$. Furthermore,
we obtain three linear syzygies on $\{xF,yF,zF\}$ from the three Koszul syzygies on 
$\{x,y,z\}$. It is clear from the construction that these $d$ linear 
syzygies are linearly independent. Since $\HF(R/L, d-1) = |Y|$, this means
we have determined all the linear first syzygies. Furthermore, the three
Koszul first syzygies on $\{xF,yF,zF\}$ generate a linear second syzygy,
so the complex given above is a subcomplex of the minimal free resolution.
A check shows that the Buchsbaum-Eisenbud criterion \cite{be} holds,
so the complex above is actually exact, hence a free resolution. The 
differential $d_2$ above involves the canonical generators $A/{\ell}_i{\ell}_{i+1}$, rather than a set involving $\{xF,yF,zF\}$. Since the $d-1$ 
linear syzygies appearing in the first $d-1$ columns of $d_2$ are
linearly independent, they agree up to a change of basis; the last column 
of $d_2$ is a vector of linear forms determined by the change of basis.
\end{proof}
\vskip .1in
\begin{thm}~\label{sectionsD} 
$H^0(D_{d-2}) \!\simeq\! Span_{\kk}\{\frac{A}{{\ell}_1{\ell}_2}, \frac{A}{{\ell}_2{\ell}_3},\ldots \}, H^1(D_{d-2})\!=\!0\!=\!H^2(D_{d-2}).$
\end{thm}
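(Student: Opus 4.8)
The plan is to identify $H^0(D_{d-2})$ with a graded piece of the ideal $J = \bigcap_{p \in Y} I_p$ (all multiplicities one, since by Lemma~\ref{schemeT} each point of $Y$ is a transverse intersection of exactly two lines), and then to read off everything from the resolution computed in Theorem~\ref{regL}. First I would invoke the standard fact recalled in \S2.1: for the blowup $X_d \xrightarrow{\pi} \PP^2$ at $Y$ with $D_{d-2} = (d-2)E_0 - \sum_{p \in Y} E_p$, one has $H^0(\mathcal{O}_{X_d}(D_{d-2})) \simeq J_{d-2}$, the degree-$(d-2)$ piece of $J$. Since $J$ is a radical ideal of points and hence saturated, Lemma~\ref{schemeT} gives $J = I_Y$, so $H^0(D_{d-2}) \simeq (I_Y)_{d-2} = L_{d-2}$. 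By Lemma~\ref{gensL} this space has the asserted basis $\{A/\ell_1\ell_2, \ldots, A/\ell_d\ell_1\}$ (these $d$ forms are linearly independent and span $L_{d-2}$, which equals $(I_Y)_{d-2}$). This settles the $H^0$ statement.

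For the vanishing of $H^1$ and $H^2$, I would translate each into a cohomological statement about the ideal sheaf of $Y$ in $\PP^2$. The cleanest route uses the projection formula together with $R^i\pi_* \mathcal{O}_{X_d}(-\sum E_p) $: because $Y$ is a reduced zero-dimensional scheme blown up with the reduced exceptional divisors, $\pi_* \mathcal{O}_{X_d}(D_{d-2}) = \mathcal{I}_Y(d-2)$ and the higher direct images vanish, so the Leray spectral sequence degenerates and $H^i(X_d, D_{d-2}) \simeq H^i(\PP^2, \mathcal{I}_Y(d-2))$ for all $i$. Then the ideal-sheaf sequence
\[
0 \longrightarrow \mathcal{I}_Y(d-2) \longrightarrow \mathcal{O}_{\PP^2}(d-2) \longrightarrow \mathcal{O}_Y(d-2) \longrightarrow 0
\]
gives the long exact cohomology sequence. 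Since $H^2(\mathcal{O}_{\PP^2}(d-2)) = 0$ for $d \ge 4$ and $H^2(\mathcal{O}_Y) = 0$ (as $Y$ is zero-dimensional), we get $H^2(\mathcal{I}_Y(d-2)) = 0$ immediately. For $H^1$, the relevant segment reads $H^0(\mathcal{O}_{\PP^2}(d-2)) \to H^0(\mathcal{O}_Y(d-2)) \to H^1(\mathcal{I}_Y(d-2)) \to H^1(\mathcal{O}_{\PP^2}(d-2)) = 0$, so $H^1(\mathcal{I}_Y(d-2))$ is the cokernel of the restriction map $R_{d-2} \to \mathcal{O}_Y(d-2)$; its vanishing is equivalent to $\HF(R/I_Y, d-2) = |Y|$, which is exactly the content established in the proof of Lemma~\ref{gensL} (the Hilbert function of $R/I_Y$ has already stabilized at $|Y|$ by degree $d-3$). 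Alternatively, the same conclusion follows directly from the resolution in Theorem~\ref{regL}, which shows $\reg(R/I_Y) = d-3$, whence $H^1(\mathcal{I}_Y(t)) = 0$ for all $t \ge d-3$.

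The main obstacle is making the identification $H^i(X_d, D_{d-2}) \simeq H^i(\PP^2, \mathcal{I}_Y(d-2))$ airtight, i.e.\ justifying the vanishing of the higher direct images $R^i\pi_*\mathcal{O}_{X_d}(D_{d-2})$ and hence the Leray degeneration; this is where one must be careful that the blowup is along a reduced points scheme so that each $E_p$ is a $\PP^1$ on which $D_{d-2}$ restricts to $\mathcal{O}(1)$, giving $R^i\pi_* = 0$ for $i > 0$. Once that reduction is in place, the remaining cohomology computations are formal consequences of the regularity bound $\reg(R/I_Y) = d-3$ from Theorem~\ref{regL}, so I expect the bulk of the work to be this bookkeeping rather than any genuinely new estimate.
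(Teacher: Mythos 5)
Your proposal is correct, and for the $H^0$ statement it coincides with the paper's argument (both invoke the identification $H^0(D_{d-2})\simeq J_{d-2}=(I_Y)_{d-2}=L_{d-2}$ from the remark following Equation~\ref{idealJ}, together with Lemma~\ref{gensL}). For the vanishing of $H^1$ and $H^2$, however, you take a genuinely different route. The paper never leaves the surface $X_d$: it kills $H^2$ by Serre duality, $H^2(D_{d-2})\simeq H^0((-d-1)E_0+\sum_{p\in Y}E_p)=0$ using $K=-3E_0+\sum_{p\in Y}E_p$, and then applies Riemann--Roch on the rational surface to get $h^0(D_{d-2})-h^1(D_{d-2})=\tfrac{D_{d-2}^2-D_{d-2}\cdot K}{2}+1=d$; since $h^0=\dim_{\kk}L_{d-2}=d$, this forces $h^1=0$. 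You instead push everything down to $\PP^2$ via $R^i\pi_*$-vanishing and Leray, and read both vanishings off the ideal-sheaf sequence: $H^2$ from $H^1(\mathcal{O}_Y(d-2))=0=H^2(\mathcal{O}_{\PP^2}(d-2))$ (note that the segment of the long exact sequence you need uses $H^1(\mathcal{O}_Y(d-2))$, not $H^2(\mathcal{O}_Y)$ as you wrote, though both vanish since $Y$ is zero-dimensional), and $H^1$ from the stabilization $\HF(R/I_Y,d-2)=|Y|$, i.e.\ the independence of the conditions imposed by $Y$ in degree $d-2$, which is indeed available from Lemma~\ref{gensL} or from the regularity bound implicit in Theorem~\ref{regL}. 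Both arguments ultimately rest on the same Hilbert-function input, but the machinery differs: the paper's computation is purely intersection-theoretic and sidesteps any discussion of higher direct images (the cited fact from Harbourne is used only for $H^0$), whereas your approach must justify $R^i\pi_*\mathcal{O}_{X_d}(D_{d-2})=0$ for $i>0$ — standard for a blow-up at reduced points, as you note — and in exchange establishes the stronger statement $H^i(X_d,D_{d-2})\simeq H^i(\PP^2,\mathcal{I}_Y(d-2))$ for all $i$, making transparent that the vanishing of $H^1$ is exactly the statement $\reg(\mathcal{I}_Y)\le d-2$.
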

\begin{proof}
The remark following Equation~\ref{idealJ} shows 
that $H^0(D_{d-2}) \simeq L_{d-2}$. Since $K = -3E_0+\!\sum\limits_{p \in Y} E_p$ (see \cite{hart}), by Serre duality 
\[
H^2(D_{d-2})\simeq H^0((-d-1)E_0+\!\sum\limits_{p \in Y} E_p),
\]
which is clearly zero. Using that $X_d$ is rational, it follows 
from Riemann-Roch that 
$$h^0(D_{d-2})-h^1(D_{d-2})=\frac{D_{d-2}^2-D_{d-2} \cdot K}{2}+ 1.$$
The intersection pairing on $X_d$ is given by 
$E_i^2  =  1$ if $i=0$, and $-1$ if $i \ne 0$, and 
\[
E_i \cdot E_j  =  0 \mbox{ if } i \ne j.
\]
Thus,
\begin{equation}\label{rr1}
\begin{aligned}
D_{d-2}^2 &= (d-2)^2 - |Y| & \mbox{ and } & -D_{d-2}K    & = 3(d-2) -|Y|,
\end{aligned}
\end{equation}
yielding
\begin{equation}\label{rr2}
\begin{aligned}
h^0(D_{d-2})-h^1(D_{d-2}) &= \frac{d^2-d-2-2|Y|}{2}+ 1&= d.
\end{aligned}
\end{equation}
Thus $h^0(D_{d-2})-h^1(D_{d-2}) = d$. Now apply the 
remark following Equation~\ref{idealJ}.
\end{proof}
\begin{cor}\label{isAmp}
If $d\!>\!4$, $D_{d-2}$ is very ample, so the image of $X_d$ in $\mathbb{P}^{d-1}$ is
smooth. %For $d=4$, $D_2$ contracts the $-1$ curve $E_0-E_1-E_2$ to a smooth point.
\end{cor}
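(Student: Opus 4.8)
The plan is to apply the Davis--Geramita criterion recalled in \S2.1 to the ideal $J = I_Y$, taken with $m = d-2$. Here every multiplicity $a_i$ equals one, so $J = \bigcap_{p \in Y} I_p = I_Y$ is the (reduced) ideal of the points $Y$, and ``collinear points counted with multiplicity'' is simply ``collinear points.'' Note that once $D_{d-2}$ is shown to be very ample the corollary follows at once: since $h^0(D_{d-2}) = d$ by Theorem~\ref{sectionsD}, very ampleness gives a closed embedding $X_d \hookrightarrow \PP^{d-1}$, and as $X_d$ is a blowup of $\PP^2$ at points it is smooth, hence so is its image $W_d$. Thus the whole statement reduces to very ampleness of $D_{d-2}$.

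First I would pin down $\gamma(I_Y)$. The Hilbert--Burch resolution of $R/I_Y$ written out in the proof of Theorem~\ref{regL} shows $\reg(R/I_Y) = d-3$, so $\reg(I_Y) = d-2$ and therefore $\gamma(I_Y) \le d-2$ by the inequality $\gamma(J)\le \reg(J)$. For the reverse bound, recall from Lemma~\ref{gensL} that $(I_Y)_{d-3}$ is spanned by the single form $F$; the principal ideal $\langle F \rangle$ defines the degree $(d-3)$ curve $\V(F)$, not the zero-dimensional scheme $Y$, so $\gamma(I_Y) > d-3$. Combining these, $\gamma(I_Y) = d-2 = m$, which places us squarely in the boundary case of the criterion: $D_{d-2}$ is very ample if and only if $Y$ contains no $d-2$ collinear points.

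The crux, and the step I expect to carry the real content, is this collinearity count. Every point of $Y$ is an intersection $\V(\ell_i)\cap\V(\ell_j)$ of two of the $d$ edge lines, and by hypothesis no three of the $\V(\ell_i)$ are concurrent. An edge line $\V(\ell_i)$ carries exactly $d-3$ points of $Y$, by the direct count used in Lemma~\ref{gensL}. For any \emph{other} line $\Lambda$, I claim each edge line $\V(\ell_i)$ meets $Y \cap \Lambda$ in at most one point: if $p,p' \in Y\cap\Lambda$ both lay on $\V(\ell_i)$, then each would equal the unique intersection $\V(\ell_i)\cap\Lambda$, forcing $p = p'$. Hence the $k := |Y\cap\Lambda|$ points use $2k$ distinct edge lines, giving $2k \le d$, i.e. $k \le \lfloor d/2\rfloor$.

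Finally, for $d > 4$ both bounds are strictly below $d-2$: trivially $d-3 < d-2$, and $\lfloor d/2\rfloor \le d/2 < d-2$ precisely when $d > 4$. Therefore no line contains $d-2$ points of $Y$, the collinearity hypothesis of the Davis--Geramita criterion is met at $m = \gamma(I_Y)$, and $D_{d-2}$ is very ample. The smoothness of $W_d$ then follows as explained above, completing the proof.
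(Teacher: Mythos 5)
Your proof is correct and takes essentially the same route as the paper: bound $\gamma$ using the regularity supplied by Theorem~\ref{regL}, then apply the Davis--Geramita criterion after verifying that $Y$ contains no $d-2$ collinear points when $d>4$. You merely make explicit two details the paper leaves implicit --- the exact value $\gamma(I_Y)=d-2$ and the collinearity bound (each edge line carries $d-3$ points of $Y$, while any other line carries at most $\lfloor d/2\rfloor$) --- and both of these verifications are correct.
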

\begin{proof}
By Theorem~\ref{regL}, the ideal $L$ is $d-2$ regular. Furthermore, 
the set $Y$ contains $d$ sets of $d-3$ collinear points, but no set of $d-2$ collinear points if $d>4$. The result follows from the Davis-Geramita 
criterion. 
\end{proof}
\begin{thm} $W_4 \simeq \mathbb{P}^{1}\times \mathbb{P}^{1}$, and $X_4 \rightarrow W_4$ is an isomorphism away from the $(-1)$ curve $E_0-E_1-E_2$, which is contracted to a smooth point.
\end{thm}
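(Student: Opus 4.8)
The plan is to handle $d=4$ directly, since the blowup $X_4$ is very concrete. Here $A = \ell_1\ell_2\ell_3\ell_4$ has $\binom{4}{2}=6$ singular points, of which four are the vertices of $P_4$, so $Y = Z \setminus \nu(P_4)$ consists of the two \emph{external} vertices, the intersection points of the two pairs of opposite (extended) edges. Label these $p_1, p_2$, so that $X_4$ is the blowup of $\PP^2$ at exactly two points. By Theorem~\ref{sectionsD} we have $H^0(D_2) \simeq \mathrm{Span}_{\kk}\{A/\ell_1\ell_2, A/\ell_2\ell_3, A/\ell_3\ell_4, A/\ell_4\ell_1\}$, a four-dimensional space, so $w_4$ maps $X_4$ into $\PP^3$. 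The divisor class is $D_2 = 2E_0 - E_{p_1} - E_{p_2}$ on the blowup of $\PP^2$ at two points.

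First I would identify the linear system. Conics through two points form a net (after blowing up, the system $|2E_0 - E_{p_1} - E_{p_2}|$ is $3$-dimensional projectively), and the classical description of the anticanonical-type embedding of the blowup of $\PP^2$ at two points is a smooth quadric surface in $\PP^3$, i.e. $\PP^1 \times \PP^1$. Concretely, the four sections $A/\ell_i\ell_{i+1}$ satisfy the single obvious relation coming from the fact that $(A/\ell_1\ell_2)(A/\ell_3\ell_4) = (A/\ell_2\ell_3)(A/\ell_4\ell_1) = A^2/(\ell_1\ell_2\ell_3\ell_4)$; thus the image lies on the quadric $x_1 x_3 = x_2 x_4$, which is exactly the Segre quadric $\PP^1\times\PP^1 \hookrightarrow \PP^3$. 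To make this rigorous I would check that the four products are a basis (Theorem~\ref{sectionsD} already gives four independent sections) and verify that the two rulings of the quadric pull back to the two pencils of lines $\V(\ell_1),\V(\ell_3)$ and $\V(\ell_2),\V(\ell_4)$ through the external vertices, confirming the birational identification $W_4 \simeq \PP^1\times\PP^1$.

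Next I would analyze the map $X_4 \to W_4$ and locate the exceptional behavior. The curve $C = E_0 - E_{p_1} - E_{p_2}$ is the proper transform of the line through the two external vertices $p_1, p_2$; it is a $(-1)$-curve since $C^2 = E_0^2 - E_{p_1}^2 - E_{p_2}^2 = 1 - 1 - 1 = -1$. Computing its intersection with the polarizing divisor gives $D_2 \cdot C = (2E_0 - E_{p_1} - E_{p_2})\cdot(E_0 - E_{p_1} - E_{p_2}) = 2 - 1 - 1 = 0$, so $C$ is contracted by the map to $W_4$. Since $D_2$ has degree zero on $C$ but is nonnegative on the other relevant curve classes, $C$ is the only curve contracted, and its image is a single point. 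That the image point is smooth follows because $W_4 \simeq \PP^1\times\PP^1$ is smooth everywhere; alternatively one checks $D_2$ separates points and tangents away from $C$, so $X_4 \to W_4$ is an isomorphism off $C$.

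The main obstacle is the failure of very ampleness: Corollary~\ref{isAmp} excludes $d=4$ precisely because $Y$ then contains a set of $d-2 = 2$ collinear points (the two external vertices lie on the contracted line $C$), so the Davis--Geramita criterion fails at the boundary case $m = \gamma(J)$. The delicate part is therefore to argue that $D_2$ is base-point free and \emph{birational} onto a smooth image even though it is not very ample, and to verify that the unique obstruction to very ampleness is exactly the contraction of $C$ — i.e. that $D_2$ fails to separate only the pair of points (or a point and tangent direction) lying along $C$. I would make this precise by examining the restriction exact sequence $0 \to \mathcal{O}_{X_4}(D_2 - C) \to \mathcal{O}_{X_4}(D_2) \to \mathcal{O}_C(D_2\cdot C) \to 0$ and noting $\mathcal{O}_C(D_2\cdot C) = \mathcal{O}_C$, which shows all sections of $D_2$ are constant along $C$, forcing $C$ to collapse to one point while separation holds elsewhere.
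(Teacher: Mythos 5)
Your proposal is correct, and it takes a genuinely different route from the paper. The paper's proof is three sentences long: it observes that $X_4$, being $\PP^2$ blown up at two points, is toric and isomorphic to $\PP^1\times\PP^1$ blown up at a point; it quotes Proposition 6.12 of \cite{cls} to get basepoint freeness of $D_2$; then $D_2^2=2$ forces the image to be an irreducible quadric in $\PP^3$, and $D_2\cdot(E_0-E_1-E_2)=0$ contracts the $(-1)$-curve, finishing the argument. You instead exhibit the quadric explicitly: writing the four sections as $\ell_3\ell_4,\ \ell_4\ell_1,\ \ell_1\ell_2,\ \ell_2\ell_3$, the Segre relation $x_1x_3=x_2x_4$ holds on the image, and since $w_4$ is birational onto its image (as $\tau\circ w_4=\mathrm{id}$), the closed two-dimensional image must be the entire smooth quadric $\PP^1\times\PP^1$; your contraction analysis via $C^2=-1$ and $D_2\cdot C=0$ coincides with the paper's. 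What your route buys is concreteness: the actual equation of $W_4$ (the single Wachspress quadric predicted by $\dim(I_{W_d})_2=d-3$ when $d=4$) and the identification of the two rulings with the pencils of lines through the external vertices. What it costs is that you must prove by hand what the toric citation gives for free, namely that $D_2$ is basepoint free so that $w_4$ is a morphism on all of $X_4$. Your restriction sequence controls the behavior along $C$ --- note it requires $H^1(\mathcal{O}_{X_4}(D_2-C))=H^1(\mathcal{O}_{X_4}(E_0))=0$, which holds since $E_0$ is the pullback of a line --- but freeness away from $C$ still needs the (easy) check that the common zero locus of the four conics in $\PP^2$ is exactly $\{p_1,p_2\}$ and that their proper transforms share no point of $E_{p_1}$ or $E_{p_2}$, using that no three of the $\V(\ell_i)$ are concurrent. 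Likewise, ``nonnegative on the other relevant curve classes'' should be sharpened to strictly positive on every irreducible curve other than $C$: either enumerate ($D_2\cdot B=0$ forces $2a=b_1+b_2$, impossible for an irreducible curve of degree $a\ge 2$ since then $b_i\le a-1$), or note that a birational morphism from a smooth surface of Picard rank $3$ onto a smooth surface of Picard rank $2$ contracts exactly one $(-1)$-curve. These are routine completions rather than flaws, so your plan goes through.
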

\begin{proof} The surface $X_4$ is $\mathbb{P}^{2}$ blown up at two points, which is toric, and isomorphic to $\mathbb{P}^{1}\times \mathbb{P}^{1}$ blown up at a point. By Proposition 6.12 of \cite{cls}, $D_2$ is basepoint free. Since $D_2^2 =2$, $W_4$ is an irreducible quadric surface in $\mathbb{P}^{3}$. As $D_2 \cdot (E_0-E_1-E_2) =0$, the result follows.
\end{proof}
Replacing $D_{d-2}$ with $tD_{d-2}$, a computation as in Equations~(\ref{rr1}) and (\ref{rr2}) and Serre vanishing shows that the Hilbert polynomial $\HP(S/I_{W_d},t)$ is equal to 
\begin{equation}\label{HPisHP}
\begin{array}{ccc}
\frac{((d-2)^2-|Y|)t^2+(3(d-2)-|Y|)t}{2}+1&=&\frac{d^2-5d+8}{4}t^2 - \frac{d^2-9d+12}{4}t+1.
\end{array}
\end{equation}
\section{The Wachspress quadrics}\label{sec:three}
In this section, we construct a set of quadrics which vanish 
on $W_d$. These quadrics are polynomials that are expressed
as a scalar product with a fixed vector $\tau$.  The vector $\tau$ defines a
linear projection $\p^{d-1}\dra\p^2$, also denoted by $\tau$, given
by $$\txbf{x}\longmapsto
\sum_{i=1}^{d}x_i{\bf v}_i,$$ where $\txbf{x}=[x_1:\cdots :x_{d}]\in\p^{d-1}$.
By the second property of barycentric coordinates,
the composition 
$\tau\circ w_d:\p^2\dra\p^2$ is the identity map on $\p^2$. Since ${\bf v}_i\in\kk^3$, the vector $\tau$ is a triple of linear forms 
$(\tau_1,\tau_2,\tau_3) \in S^3$. The linear
subspace $\mcC$ of $\p^{d-1}$ where the projection is undefined 
is the {\em center of projection}, 
and $I_{\mcC} =\langle \tau_1,\tau_2,\tau_3 \rangle$.

\subsection{Diagonal Monomials}
 A \emph{diagonal monomial} is a monomial
$x_ix_j \in S_2$ such that $j\notin\{i-1,i,i+1\}$. We write
$\mathcal{D}$ for the subspace of $S_2$ spanned by the
diagonal monomials; identifying $x_i$ with the vertex $v_i$, 
a diagonal monomial is a diagonal in $P_d$, see Figure \ref{diag}.
\vspace{10pt}
\begin{figure}[htp]
  \begin{center}
\psfrag{xi}{\Large$x_i$}
\psfrag{xj}{\Large$x_j$}
\includegraphics[scale=.5]{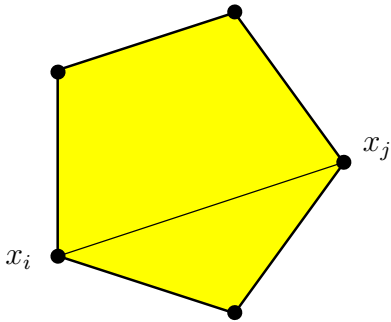}
  \end{center}
  \caption{A diagonal monomial}
\label{diag}
\end{figure}
\begin{lem}
Any quadric which vanishes on $W_d$ is a linear combination of 
elements of $\mathcal{D}$.
\label{squarefree}
\end{lem}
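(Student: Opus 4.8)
The plan is to show that any quadric $q \in S_2$ vanishing on $W_d$ has no $x_i^2$ term and no $x_ix_{i+1}$ ``edge'' term, so that $q$ is forced to lie in the span $\mcD$ of the diagonal monomials. The key observation is that a quadric vanishes on $W_d$ if and only if its pullback under $w_d$ vanishes identically as a polynomial in $R = \kk[x,y,z]$, since $W_d$ is the closure of the image of $w_d$. Writing $q = \sum_{i \le j} c_{ij} x_i x_j$, the pullback is $w_d^*(q) = \sum_{i \le j} c_{ij}\, b_i b_j$, where $b_i = \alpha_i \prod_{k \ne i-1,i} A_k = \alpha_i \cdot A/({\ell}_{i-1}{\ell}_i)$ in the homogenized notation of Definition~\ref{theLinearforms}. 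Thus each product $b_i b_j$ is a scalar multiple of $A^2/({\ell}_{i-1}{\ell}_i{\ell}_{j-1}{\ell}_j)$, and the whole relation is a statement about a $\kk$-linear dependence among rational functions of this shape.

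First I would clear the common denominator $A^2$ and reduce to an identity among the products $A^2/({\ell}_{i-1}{\ell}_i{\ell}_{j-1}{\ell}_j)$, viewed as genuine polynomials in $R$. The crucial arithmetic feature is \emph{how divisible} each such product is by a fixed linear form ${\ell}_k$. Each $b_i$ is divisible by ${\ell}_k$ for every $k \ne i-1,i$, i.e. it misses exactly the two forms adjacent to vertex $v_i$. So in the product $b_ib_j$, the form ${\ell}_k$ appears to the power $2$ minus (the number of times $k \in \{i-1,i\}$) minus (the number of times $k \in \{j-1,j\}$). The plan is to track, for a generic edge form ${\ell}_k$, exactly which monomials $x_ix_j$ give a product $b_ib_j$ that fails to be divisible by ${\ell}_k$: these are precisely the monomials whose two indices together ``use up'' both copies of ${\ell}_k$. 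A square $x_i^2$ uses ${\ell}_{i-1}$ and ${\ell}_i$ each twice; an edge monomial $x_ix_{i+1}$ uses ${\ell}_i$ twice (since $i \in \{i-1,i\}$ is false but $i \in \{(i+1)-1,(i+1)\} = \{i,i+1\}$ and $i \in \{i-1,i\}$ — here the shared form is ${\ell}_i$). A diagonal monomial $x_ix_j$ with $|i-j|$ not $0$ or $1$ mod $d$ never uses any single ${\ell}_k$ twice, so every diagonal product $b_ib_j$ is divisible by $A = \prod {\ell}_k$.

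The main step is then an isolation argument. Fixing $k$, I would set ${\ell}_k = 0$ and examine which terms of $w_d^*(q)/A$ survive. Because the only monomials whose contribution is not divisible by ${\ell}_k$ are those ``concentrated'' at the two endpoints of edge $k$ — namely the squares $x_k^2, x_{k+1}^2$ and the edge monomial $x_kx_{k+1}$ — restricting modulo ${\ell}_k$ kills every other term and leaves a short relation among these few surviving terms. Since no three of the lines $\V({\ell}_i)$ are concurrent (the standing hypothesis of the paper), the surviving factors $\alpha_\bullet^2 \prod_{\text{others}}{\ell}_\bullet$ are nonzero and linearly independent modulo ${\ell}_k$, which forces the corresponding coefficients $c_{kk}, c_{k+1,k+1}, c_{k,k+1}$ to vanish. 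Running this over all $k$ eliminates every square and every edge coefficient, leaving $q \in \mcD$.

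The hard part will be making the divisibility bookkeeping precise and confirming that the few surviving terms modulo ${\ell}_k$ really are $\kk$-linearly independent rather than secretly cancelling. The delicate point is that after restricting to the line $\V({\ell}_k)$, the products $b_i b_j$ that survive must be compared as sections on that line, and one must verify that the leftover forms (the $A_m = {\ell}_m$ for $m \ne k$, evaluated along $\V({\ell}_k)$, together with the area factors $\alpha_\bullet$) do not conspire to produce an extra relation. This is exactly where the no-three-concurrent-lines hypothesis is doing the work: it guarantees that distinct edge forms cut $\V({\ell}_k)$ in distinct points, so the surviving products have distinct vanishing patterns along the line and cannot cancel. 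I would therefore organize the proof so that the independence modulo ${\ell}_k$ is reduced to the statement that the restrictions of the relevant ${\ell}_m$ to $\V({\ell}_k)$ are pairwise non-proportional, which follows immediately from the hypothesis.
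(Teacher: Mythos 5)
Your proposal is correct and follows essentially the same route as the paper: both restrict the vanishing identity to each edge line $\V({\ell}_k)$, observe that only the terms supported on $\{x_k,x_{k+1}\}$ (i.e.\ $x_k^2$, $x_kx_{k+1}$, $x_{k+1}^2$) survive there, and conclude that those three coefficients vanish. The only difference is cosmetic: the paper finishes by evaluating at $v_k$, $v_{k+1}$, and an interior point of the edge, while you finish by noting that the restrictions of ${\ell}_{k-1}$ and ${\ell}_{k+1}$ to $\V({\ell}_k)$ are non-proportional (by the no-three-concurrent-lines hypothesis), so the three surviving products are linearly independent.
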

\begin{proof}
Let $Q$ be a polynomial in $(I_{W_d})_2$.  Then $Q(w_d)=Q(b_1,\dots,b_d)=0$.  On
the edge $[v_k,v_{k+1}]$ all the $b_i$ vanish except $b_k$ and $b_{k+1}$.  Thus
on this edge, the expression $Q(w_d)=0$ is 
\begin{equation}
\label{monomcombo}
c_1b_k^2+c_2b_kb_{k+1}+c_3b_{k+1}^2=0
\end{equation}
 for
some constants $c_1,c_2,$ and $c_3$ in $\kk$.  Recall that $b_i(v_j)=0$ if
$i\neq j$ and $b_i(v_i)\neq 0$ for each $i$. Evaluating  Equation
\ref{monomcombo} at $v_k$ and $v_{k+1}$, we conclude $c_1=c_3=0$.  At an
interior point of edge $[v_k,v_{k+1}]$ neither $b_k$ nor $b_{k+1}$
vanishes.  This implies that $c_2=0$.  A similar calculation on each edge shows
that all coefficients of non-diagonal terms in $Q$ are zero. 
 \end{proof}

\subsection{The Map to $(I_{\mcC})_2$}
We define a surjective map onto $(I_{\mcC})_2$, and use the map to
calculate the dimension of the vector space of polynomials 
in $(I_{\mcC})_2$ that are supported on diagonal monomials. 
Let $S_1^3$ denote the space of triples of linear forms on
$\p^{d-1}$.
Define the map $\Psi:S_1^3\to (I_{\mcC})_2$ by 
$F\mapsto F\cdot \tau,$ where $\cdot$ is the scalar product. 
\begin{lem}
The kernel of $\Psi$ is three-dimensional. 
\end{lem}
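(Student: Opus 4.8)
The plan is to analyze the map $\Psi : S_1^3 \to (I_{\mcC})_2$ defined by $F \mapsto F \cdot \tau$, where $\tau = (\tau_1, \tau_2, \tau_3)$ is the triple of linear forms cutting out the center of projection $\mcC$. Writing $F = (F_1, F_2, F_3)$ with each $F_j \in S_1$, we have $\Psi(F) = F_1\tau_1 + F_2\tau_2 + F_3\tau_3$, so the kernel of $\Psi$ consists precisely of the \emph{linear syzygies} on the forms $\tau_1, \tau_2, \tau_3$. The task is thus to show that these three forms admit exactly a three-dimensional space of first syzygies in degree one.

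First I would recall the structural fact established earlier: the composition $\tau \circ w_d$ is the identity on $\p^2$, which encodes the second barycentric property $p = \sum x_i \mathbf{v}_i$. This tells us the $\tau_j$ are genuinely three independent linear forms (their common zero locus $\mcC$ is a codimension-three linear subspace $\p^{d-4}$, assuming $d \ge 4$), so in particular no nontrivial \emph{constant} relation among them exists. The key observation is that the only linear syzygies on three linearly independent linear forms are the Koszul syzygies, namely the relations generated by $\tau_i \cdot \tau_j - \tau_j \cdot \tau_i = 0$; there are $\binom{3}{2} = 3$ of these. Concretely, the Koszul complex on $(\tau_1,\tau_2,\tau_3)$ gives first syzygies spanned by the columns
\[
(\tau_2, -\tau_1, 0), \quad (\tau_3, 0, -\tau_1), \quad (0, \tau_3, -\tau_2),
\]
which are three linearly independent elements of $\ker \Psi$, each of degree one in $S$ since the $\tau_i$ are linear.

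The crux is then to argue that there are \emph{no other} linear syzygies. Here I would use that $\tau_1, \tau_2, \tau_3$ form a regular sequence: since they are three linearly independent linear forms in $S$ and $S$ is a polynomial ring, they cut out a complete intersection of codimension three, so the Koszul complex on them is exact and resolves $S/\langle \tau_1,\tau_2,\tau_3\rangle = S/I_{\mcC}$. Exactness of the Koszul complex means the module of first syzygies on $(\tau_1,\tau_2,\tau_3)$ is generated precisely by the three Koszul relations, with no additional minimal syzygies in any degree, and in particular none in degree one beyond these three. Therefore $\ker\Psi$, the degree-one part of the syzygy module, is exactly the three-dimensional span of the Koszul syzygies, giving $\dim_{\kk} \ker \Psi = 3$.

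I expect the main obstacle to be cleanly justifying that $\tau_1, \tau_2, \tau_3$ really do form a regular sequence, i.e. that they are linearly independent over $\kk$ as elements of $S_1$. This follows from the identity $\tau \circ w_d = \mathrm{id}_{\p^2}$ together with the independence of the vertices' lifted coordinates $\mathbf{v}_i \in \kk^3$, but one must verify that a nontrivial $\kk$-linear combination $\sum c_j \tau_j = 0$ would force the projection $\tau$ to land in a proper subspace, contradicting that its composition with $w_d$ is the full identity. Once independence is in hand, the regular-sequence property and exactness of the Koszul complex are automatic in a polynomial ring, and the dimension count of the linear strand follows immediately; the rest is bookkeeping.
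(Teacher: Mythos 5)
Your proposal is correct and takes essentially the same approach as the paper: the paper's one-line proof is precisely that $I_{\mcC}$ is a complete intersection, so the kernel of $\Psi$ is spanned by the three Koszul syzygies on the $\tau_i$. You simply make explicit what the paper leaves implicit, namely the linear independence of $\tau_1,\tau_2,\tau_3$ (hence the regular-sequence property) and the fact that exactness of the Koszul complex forces every degree-one syzygy to be a constant combination of the three Koszul relations.
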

\begin{proof}
Since $I_{\mcC}$ is a complete intersection, the kernel is generated by 
the three Koszul syzygies on the $\tau_i$.
\end{proof}

Next we determine conditions on $F$ so that $\Psi(F) \in \mcd$. 
If ${\bf u}_i\in\kk^3$ for $i=1,\dots,d$, then $$F=\sum_{i=1}^d x_i{\bf u}_i$$
is an element of $S_1^3$.
Viewing the projection $\tau$ as an element of $S_1^3$ we have
\begin{equation}
\Psi(F)=F\cdot\tau=(\sum_{i=1}^d x_i{\bf u}_i)\cdot (\sum_{i=1}^d x_i{\bf v}_i)=
\sum_{i,j=1}^d ({\bf u}_i\cdot {\bf v}_j+{\bf u}_j\cdot {\bf v}_i)x_ix_j.
\end{equation}
If $\Psi(F) \in \mcd$ then the coefficients of non-diagonal monomials
must vanish:
\begin{equation}
\label{eq2}
 {\bf u}_i\cdot {\bf v}_i =0\ \ \ \ \mbox{and}\ \ \ \ 
 {\bf u}_i\cdot {\bf v}_{i+1}+{\bf u}_{i+1}\cdot {\bf v}_i =0 \ \ \ \mbox{for all } i.
\end{equation}
\begin{lem}
\label{dimensionComp}
The dimension of the vector space $\mcd\cap (I_{\mcC})_2$ is $d-3$.
\end{lem}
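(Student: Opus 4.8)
The plan is to reduce the statement to a dimension count for the linear system (\ref{eq2}), leveraging the surjection $\Psi$ and the previous lemma on its kernel. Since $\Psi : S_1^3 \to (I_\mcC)_2$ is surjective, its restriction to $V := \Psi^{-1}(\mcd)$ has image exactly $\mcd \cap (I_\mcC)_2$, and kernel equal to $\ker\Psi$ (which lies inside $V$ because $0 \in \mcd$). The previous lemma gives $\dim\ker\Psi = 3$, so rank--nullity yields $\dim(\mcd \cap (I_\mcC)_2) = \dim V - 3$. Everything therefore comes down to showing $\dim V = d$, i.e.\ that the $2d$ equations (\ref{eq2}) are independent on the $3d$-dimensional space $S_1^3$.

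To compute $\dim V$, I would write $F = \sum_i x_i\mathbf{u}_i$ with $\mathbf{u}_i \in \kk^3$, so that $V$ is the solution space of the conditions $\mathbf{u}_i\cdot\mathbf{v}_i = 0$ together with $\mathbf{u}_i\cdot\mathbf{v}_{i+1} + \mathbf{u}_{i+1}\cdot\mathbf{v}_i = 0$. The first $d$ equations constrain each $\mathbf{u}_i$ independently to the plane $\mathbf{v}_i^\perp$, cutting $S_1^3$ down to dimension $2d$. Here I use that each consecutive triple $\{\mathbf{v}_{i-1},\mathbf{v}_i,\mathbf{v}_{i+1}\}$ is a basis of $\kk^3$, which is precisely the nonvanishing of $\alpha_i = |\mathbf{v}_{i-1}\,\mathbf{v}_i\,\mathbf{v}_{i+1}|$ for a nondegenerate polygon. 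Consequently the functionals $\mathbf{u}_i \mapsto (\mathbf{u}_i\cdot\mathbf{v}_{i-1},\,\mathbf{u}_i\cdot\mathbf{v}_{i+1})$ restrict to an isomorphism $\mathbf{v}_i^\perp \xrightarrow{\sim} \kk^2$, furnishing coordinates $(q_i,p_i) := (\mathbf{u}_i\cdot\mathbf{v}_{i-1},\,\mathbf{u}_i\cdot\mathbf{v}_{i+1})$ on each factor.

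In these coordinates the remaining $d$ coupling equations become simply $q_{i+1} = -p_i$, with indices mod $d$. Each such equation determines one new variable $q_{i+1}$ from an independent free variable $p_i$, so the system is freely parametrized by $(p_1,\ldots,p_d) \in \kk^d$ with the $q_i$ then determined; no cyclic consistency obstruction arises, since each $q_i$ is pinned down by a distinct $p$. Hence $\dim V = d$, and $\dim(\mcd \cap (I_\mcC)_2) = d-3$.

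The step I expect to be the main obstacle is the independence of the $d$ coupling equations on the $2d$-dimensional space $\prod_i \mathbf{v}_i^\perp$: a priori they could be dependent and inflate the dimension. The change of coordinates $(q_i,p_i)$, valid because $\alpha_i \neq 0$, is exactly what makes this transparent, turning the potentially tangled cyclic relations into the manifestly triangular equations $q_{i+1} = -p_i$. As a sanity check, $d = 6$ gives $d-3 = 3$, matching the three quadric generators of $I_{W_6}$ recorded in Example~\ref{braidex1}.
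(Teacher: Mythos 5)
Your proof is correct, and it shares the paper's overall skeleton: identify $\Psi^{-1}(\mcd\cap(I_{\mcC})_2)$ with the solution space of the conditions \eqref{eq2}, show that this space is $d$-dimensional, and conclude by rank--nullity using $\dim\ker\Psi=3$ and surjectivity of the restricted map. The difference lies in how the $d$-dimensionality is established. The paper argues on the side of the conditions: it assembles them into the $2d\times 3d$ matrix $M$ and shows the rows are independent, since any dependence relation, read three columns at a time, produces a vanishing linear combination of an adjacent triple $\mathbf{v}_{i-1},\mathbf{v}_i,\mathbf{v}_{i+1}$, which is a basis of $\kk^3$. You argue on the side of the solutions: the conditions $\mathbf{u}_i\cdot\mathbf{v}_i=0$ reduce to $\prod_i\mathbf{v}_i^{\perp}$, and your dual coordinates $(q_i,p_i)=(\mathbf{u}_i\cdot\mathbf{v}_{i-1},\,\mathbf{u}_i\cdot\mathbf{v}_{i+1})$ --- legitimate for exactly the same reason, nonvanishing of the $\alpha_i$ --- convert the cyclic coupling equations into the triangular system $q_{i+1}=-p_i$, which visibly has solution space $\kk^d$. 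So the two arguments are dual rank computations from identical geometric input. Your version buys a little more, though: unwinding the parametrization, the solution with $p_i=1$ and all other $p_j=0$ is $\frac{x_i}{\alpha_i}\bn_{i-1}-\frac{x_{i+1}}{\alpha_{i+1}}\bn_{i+1}=-\Lambda_i$, so your coordinates produce, essentially for free, the basis $\{\Lambda_k\}$ of $\Psi^{-1}(\mcd\cap(I_{\mcC})_2)$ that the paper must construct and verify separately in Lemma~\ref{basis}.
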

\begin{proof}
We show the conditions in Equation \eqref{eq2} give $2d$ independent conditions
on the $3d$-dimensional vector space $S_1^3$, and the solution
space is $\Psi^{-1}(\mcd \cap (I_{\mcC})_2)$, thus $\dim(\Psi^{-1}(\mcd
\cap (I_{\mcC})_2))=d$.
 The conditions are represented by the matrix equation:
$$
\left(
\begin{array}{c}
{\bf v}_1\cdot {\bf u}_1\\
\vdots\\
{\bf v}_d\cdot {\bf u}_d\\
{\bf v}_1 \cdot {\bf u}_2+{\bf v}_2 \cdot {\bf u}_1\\
\vdots\\
{\bf v}_d \cdot {\bf u}_1+{\bf v}_1 \cdot {\bf u}_d
\end{array}
\right)=
\overbrace{\left(\begin{array}{cccc}
{\bf v}_1^T & 0 & \cdots & 0\\
0 & {\bf v}_2^T &  & \vdots\\
\vdots &  & \ddots & 0\\
0 & \cdots & 0 & {\bf v}_d^T\\
{\bf v}_2^T & {\bf v}_1^T &  & 0\\
0 &  & \ddots & \\
{\bf v}_d^T &  &  & {\bf v}_1^T
\end{array}\right)}^M
\left(\begin{array}{c}
{\bf u}_1\\
{\bf u}_2\\
\vdots\\
\vdots\\
\vdots\\
\vdots\\
{\bf u}_d
\end{array}
\right)=
\left(
\begin{array}{l}
0\\
0\\
\vdots\\
\vdots\\
\vdots\\
0
\end{array}
\right),$$
where the ${\bf v}_i$ and ${\bf u}_i$ are column vectors the superscript $T$ indicates
transpose. The matrix $M$ in the middle is
a $2d\times 3d$ matrix, and the proof will be complete if the rows are shown to
be  independent.  Denote the rows of $M$ by $r_1,\dots,r_d,r_{d+1}\dots,r_{2d}$
and let  $c_1 r_1+\cdots+c_d r_d+c_{d+1}r_{d+1}+\cdots+c_{2d} r_{2d}$ be a dependence relation among them.  The first three columns give  the dependence 
relation $c_1 {\bf v}_1+c_{d+1}{\bf v}_2+c_{2d}{\bf v}_d=0$. 
Since ${\bf v}_d,{\bf v}_1,$ and ${\bf v}_2$ define adjacent rays of a polyhedral cone, they must
be
independent, so $c_1,c_{d+1},$ and $c_{2d}$ must be zero. 
Repeating the process at each triple ${\bf v}_{i-1},{\bf v}_i,$ and ${\bf v}_{i+1}$ shows the rest of the $c_i$'s vanish. Since the restriction 
$\Psi:\Psi^{-1}(\mcd \cap
(I_{\mcC})_2)\to \mcd \cap
(I_{\mcC})_2$ remains surjective we find $\dim(\mcd \cap
(I_{\mcC})_2)=\dim(\Psi^{-1}(\mcd \cap
(I_{\mcC})_2))-\dim(\ker(\Psi))=d-3$.
\end{proof}

\subsection{Wachspress Quadrics}
We now compute the dimension and a generating set for $(I_{W_d})_2$. 
\begin{defn}\label{gammaSet}
Let $\gamma(i)$ denote the set $\{1,\dots,d\}\setminus
\{i-1,i\}$, $\gamma(i,j)=\gamma(i)\cap\gamma(j)$, and 
$\gamma(i,j,k)=\gamma(i)\cap\gamma(j)\cap\gamma(k)$.
\end{defn}
The image of a diagonal monomial $x_ix_j$ under the pullback map
$w_d^*: S \to R$ is
$$b_ib_j=\alpha_i\alpha_j\prod_{k\in
\gamma(i)}\ell_k\prod_{m\in\gamma(j)}\ell_m=\alpha_i\alpha_j\prod_{k=1}^d
\ell_k
\prod_{m\in\gamma(i,j)}  \ell_m,$$
and each diagonal monomial has a common factor
$A=\prod_{k=1}^d\ell_k$.  
To find the quadratic relations among Wachspress coordinates it suffices to
find linear relations among
products $\prod_{m\in\gamma(i,j)}\ell_m\in R_{d-4}$ for
diagonal pairs $i,j$. Define the map $\phi:\mcd\to R_{d-4}$ by 
$x_ix_j\longmapsto \frac{b_i b_j}{A},$ and extend by linearity; this is 
$w_d^*$ restricted to $\mcd$ and divided by $A$.  By Lemma
\ref{squarefree} it follows that
$(I_{W_d})_2=\ker(\phi)\sset \mcd$.
\begin{lem}\label{surjmap}
The dimension of $(I_{W_d})_2$ is $d-3$.
\end{lem}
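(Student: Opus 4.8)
The plan is to compute $\dim (I_{W_d})_2$ directly from the identification $(I_{W_d})_2=\ker(\phi)$ just established, where $\phi:\mcd\to R_{d-4}$ sends $x_ix_j$ to $b_ib_j/A=\alpha_i\alpha_j\prod_{m\in\gamma(i,j)}\ell_m$. The scalars $\alpha_i=|{\bf v}_{i-1}{\bf v}_i{\bf v}_{i+1}|$ are nonzero (three adjacent rays are independent), so they may be dropped when computing the rank. First I would record the two dimensions: $\mcd$ is spanned by the diagonal monomials, so $\dim\mcd=\binom d2-d=\tfrac{d(d-3)}2$, while $\dim R_{d-4}=\binom{d-2}2$. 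Since $\operatorname{im}(\phi)\subseteq R_{d-4}$, rank--nullity gives at once the lower bound
\[
\dim (I_{W_d})_2=\dim\mcd-\rank\phi\ \ge\ \tfrac{d(d-3)}{2}-\binom{d-2}{2}=d-3 ,
\]
so the entire content of the lemma is the reverse inequality, namely that $\phi$ is \emph{surjective}.

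To prove surjectivity I would show the degree-$(d-4)$ forms $g_{ij}:=\prod_{m\in\gamma(i,j)}\ell_m$ span $R_{d-4}$, testing them against evaluation at the singular locus $Z$ of $\V(A)$, i.e. the $\binom d2$ pairwise intersections $P_{ab}=\V(\ell_a)\cap\V(\ell_b)$. Two facts drive this. First, because no three lines are concurrent, $g_{ij}(P_{ab})\ne 0$ exactly when $a,b\in\{i-1,i,j-1,j\}$; in particular, at the polygon vertices $v_k=\V(\ell_{k-1})\cap\V(\ell_k)$ one gets the clean rule $g_{ij}(v_k)\ne 0\iff k\in\{i,j\}$. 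Second, no nonzero form of degree $\le d-2$ vanishes on all of $Z$: restricted to a line $\V(\ell_i)$ such a form would vanish at the $d-1$ distinct points $P_{im}$, hence vanish on the line, forcing $\ell_i$ to divide it for every $i$ and so $A\mid$ it, impossible on degrees. This second fact shows evaluation at $Z$ is injective on $R_{d-4}$, so $\rank\phi$ equals the rank of the incidence matrix $N=\big(g_{ij}(P_{ab})\big)$, whose support is governed by the first fact; it then suffices to prove $\rank N=\binom{d-2}2$.

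The main obstacle is precisely this combinatorial rank computation, and it is \emph{not} a generic-position statement: only those $(d-4)$-fold products $g_{ij}$ occur whose four omitted indices $\{i-1,i,j-1,j\}$ form two vertex-disjoint edges of the $d$-cycle, so surjectivity cannot be quoted from a general spanning principle for products of linear forms but must be checked for this restricted shape. My approach would be to extract a $\binom{d-2}2\times\binom{d-2}2$ submatrix of $N$ that becomes triangular after ordering a chosen set of diagonals and points; its diagonal entries are products $\prod_m\ell_m(P_{ab})$, which are nonzero for \emph{every} admissible configuration by the no-three-concurrent hypothesis, so triangularity forces $\rank N\ge\binom{d-2}2$ uniformly rather than merely generically. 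I expect the delicate step to be selecting this triangular system--using the vertex rule $g_{ij}(v_k)\ne0\iff k\in\{i,j\}$ to peel off diagonals one index at a time and then descending to the external points $Y=Z\setminus\nu(P_d)$. An appealing alternative that sidesteps the matching is to exhibit $d-3$ explicit relations $\sum c_{ij}g_{ij}=0$ and prove they span $\ker\phi$; this would simultaneously produce the generating Wachspress quadrics that are the real goal of this section.
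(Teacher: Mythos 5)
Your reduction is exactly the paper's: identify $(I_{W_d})_2=\ker\phi$, note $\dim\mcd=d(d-3)/2$ and $\dim R_{d-4}=\binom{d-2}{2}$, and observe via rank--nullity that the whole lemma is the surjectivity of $\phi$. Your supporting facts about evaluation at $Z$ are also essentially right: $g_{ij}(P_{ab})\ne 0$ iff $\{a,b\}\subseteq\{i-1,i,j-1,j\}$, and no nonzero form of degree at most $d-2$ can vanish on all of $Z$. The genuine gap is that you never produce the triangular system: you do not specify which $\binom{d-2}{2}$ diagonals, which $\binom{d-2}{2}$ points of $Z$, or in what order, and you yourself flag this selection as the delicate step. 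But that selection \emph{is} the content of the lemma; everything preceding it is bookkeeping. The paper carries it out (Table~\ref{prooftable}): take the diagonal monomials \emph{not} involving $x_1$ (there are $d(d-3)/2-(d-3)=\binom{d-2}{2}$ of them), evaluate at the external vertices \emph{not} lying on $\ell_d$ (also $\binom{d-2}{2}$ of them), order both sets lexicographically, and check the resulting matrix is lower triangular with nonzero diagonal; independence of the images, hence surjectivity of $\phi$, follows.

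Moreover, the one concrete tool you offer for building the triangular system is false, and it contradicts your own first fact. You claim the ``clean rule'' $g_{ij}(v_k)\ne 0\iff k\in\{i,j\}$, but since $v_{i+1}=\V(\ell_i)\cap\V(\ell_{i+1})$ and $g_{i,i+2}$ omits precisely the factors $\ell_{i-1},\ell_i,\ell_{i+1},\ell_{i+2}$, one has $g_{i,i+2}(v_{i+1})\ne 0$ although $i+1\notin\{i,i+2\}$. The correct vertex rule, obtained from your first fact with $\{a,b\}=\{k-1,k\}$, is $g_{ij}(v_k)\ne 0\iff\{k-1,k\}\subseteq\{i-1,i,j-1,j\}$, which has the extra case $j=i+2$, $k=i+1$. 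So ``peeling off diagonals one index at a time'' by your stated rule produces a wrong zero pattern exactly at the short diagonals $x_ix_{i+2}$; the paper avoids this issue entirely by using only external vertices as evaluation points. Your closing alternative---exhibit $d-3$ explicit relations and prove they span $\ker\phi$---does not escape the problem either: rank--nullity already gives $\dim\ker\phi\ge d-3$ for free, and proving the exhibited relations \emph{span} the kernel is equivalent to the surjectivity you have not established.
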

\begin{proof}
We will show $\phi:\mcd \to R_{d-4}$ is surjective with
$\dim(\ker \phi)=d-3$. To see this, note that there are $d-3$ 
diagonal monomials that have $x_1$ as a factor.  We show
that the images of the remaining
\begin{equation*}
d(d-3)/2-(d-3)=(d-3)(d-2)/2=\dim(R_{d-4})
\end{equation*}
diagonal monomials are independent. Let $p_{s,t}=\ell_s\cap\ell_t$ and
$x_{p,q} = x_px_q$. In Table \ref{prooftable}, a star, $*$,
represents a nonzero number, a blank space is zero.  The $(i,j)$ entry in 
Table \ref{prooftable} represents the value of the image of the diagonal
monomial in column $j$ at the external vertex in row $i$.  The external
vertices not lying on $\ell_d$ are arranged down the rows with their indices in
lexicographic order. 
\begin{table}[htp]
\begin{center}
$\begin{array}{cccccccccccc}
 &  x_{2,4}  &  \cdots &  x_{2,d} & x_{3,5} & \cdots &
 x_{3,d} & \cdots  &  x_{d-3, d-1}  &
 x_{d-3,d} & x_{d-2,d} & \\
 %&  &  &  &  &  &  &  &  & \\%exp row
 p_{1,3}  & * &  &  &  &  &  &  &  & \\
% &  &  &  &  &  &  &  &  & \\%exp row
\vdots &  &  \ddots &  &  &  &  &  &  & \\
% &  &  &  &  &  &  &  &  & \\%exp row
 p_{1,d-1}   &  &  & * &  &  &  &  &  & \\
% &  &  &  &  &  &  &  &  & \\%exp row
 p_{2,4}  & * &  &  & * &  &  &  &  & \\
% &  &  &  &  &  &  &  &  & \\%exp row
\vdots&  &  &  &  & \ddots &  &  &  & \\
% &  &  &  &  &  &  &  &  & \\%exp row
 p_{2,d-1}  &  &  & * &  &  & * &  &  & \\
% &  &  &  &  &  &  &  &  & \\%exp row
\vdots&  &  &  &  &  &  & \ddots &  & \\
% &  &  &  &  &  &  &  &  & \\%exp row
 p_{(d-4)(d-2)}  &  &  &  &  &  &  &  & * & \\
% &  &  &  &  &  &  &  &  & \\%exp row
 p_{(d-4)(d-1)}  &  &  &  &  &  &  &   & * & * & \ \ \\
% &  &  &  &  &  &  &  &  & \\%exp row
 {p_{(d-3)(d-1)}}  &  &  &  &  &  &  &  & * & * & * \ \ 
\end{array}$
\vskip .2in
\caption{Values of images of diagonal monomials at external vertices}
\label{prooftable}
\end{center}
\end{table}

Since Table~\ref{prooftable} is lower triangular, the images are 
independent. We have found $\dim(R_{d-4})$ independent images and hence
$\phi$ is surjective.  This is a map from
a vector space of dimension $d(d-3)/2$ to one of dimension $(d-2)(d-3)/2$. 
The map is surjective, so the kernel has
dimension $d(d-3)/2-(d-2)(d-3)/2=d-3$.
\end{proof}

There is a generating set for $(I_{W_d})_2$ where each generator is a scalar
product with the vector $\tau$.  The other vectors in these scalar products
are $$\Lambda_k=\frac{x_{k+1}}{\alpha_{k+1}} \bn_{k+1}-\frac{x_k}{\alpha_k}
 \bn_{k-1}\in S_1^3.$$
\begin{lem}
\label{basis}
The vectors $\{\Lambda_1\dots,\Lambda_d\}$ form a basis for the space
$\Psi^{-1}(\mcd\cap (I_{\mcC})_2)$.
\end{lem}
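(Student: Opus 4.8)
The plan is to leverage the dimension count already established: in the proof of Lemma~\ref{dimensionComp} the $2d$ conditions of Equation~\eqref{eq2} were shown to be independent, so $\Psi^{-1}(\mcd\cap (I_{\mcC})_2)$ has dimension $d$. Since we are given exactly $d$ vectors $\Lambda_1,\dots,\Lambda_d$, it suffices to prove two things: that each $\Lambda_k$ lies in $\Psi^{-1}(\mcd\cap (I_{\mcC})_2)$, and that the $\Lambda_k$ are linearly independent. A spanning statement is then automatic.

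For membership I would write $\Lambda_k$ in the form $\sum_{i=1}^d x_i{\bf u}_i$ used in Equation~\eqref{eq2}: its only nonzero coefficient vectors are ${\bf u}_{k+1}=\frac{1}{\alpha_{k+1}}\bn_{k+1}$ and ${\bf u}_k=-\frac{1}{\alpha_k}\bn_{k-1}$. Membership amounts to verifying the $2d$ scalar conditions, and most of them hold trivially because the relevant ${\bf u}_i$ vanish. The substantive checks use only two facts. First, $\bn_j={\bf v}_j\times {\bf v}_{j+1}$ is orthogonal to both ${\bf v}_j$ and ${\bf v}_{j+1}$, which makes every ``diagonal'' condition ${\bf u}_i\cdot {\bf v}_i=0$ automatic, and also handles the cross-term at index $i=k-1$, since ${\bf u}_k\cdot {\bf v}_{k-1}=-\frac{1}{\alpha_k}\bn_{k-1}\cdot {\bf v}_{k-1}=0$. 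Second, for the one genuinely nontrivial cross-term, at $i=k$, I would use the scalar triple product to compute $\bn_{k-1}\cdot {\bf v}_{k+1}=|{\bf v}_{k-1}{\bf v}_k{\bf v}_{k+1}|=\alpha_k$ and $\bn_{k+1}\cdot {\bf v}_k=|{\bf v}_k{\bf v}_{k+1}{\bf v}_{k+2}|=\alpha_{k+1}$. The normalizations by $\alpha_k$ and $\alpha_{k+1}$ are chosen precisely so that ${\bf u}_k\cdot {\bf v}_{k+1}+{\bf u}_{k+1}\cdot {\bf v}_k=-1+1=0$. This cancellation is the one step requiring care, and it is the heart of the argument.

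For independence, suppose $\sum_k c_k\Lambda_k=0$ in $S_1^3$ and collect the coefficient of each variable $x_i$. Only $\Lambda_{i-1}$ and $\Lambda_i$ contribute to $x_i$, yielding $\frac{1}{\alpha_i}(c_{i-1}\bn_i-c_i\bn_{i-1})=0$, that is, $c_{i-1}\bn_i=c_i\bn_{i-1}$. The adjacent normals $\bn_{i-1}$ and $\bn_i$ are linearly independent: were they parallel, the rays ${\bf v}_{i-1},{\bf v}_i,{\bf v}_{i+1}$ would be coplanar, forcing $\alpha_i=0$ and contradicting the fact (used already in Lemma~\ref{dimensionComp}) that adjacent rays of the cone are independent. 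Hence $c_{i-1}=c_i=0$, and letting $i$ range over all indices kills every coefficient.

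The main obstacle is exactly the $i=k$ cross-term in the membership check, where the specific $\alpha$-normalizations must be tracked through the scalar triple product identities; the remaining conditions follow from orthogonality and the independence argument is then routine. Having verified both claims, the $d$ linearly independent vectors $\Lambda_k$ lie in the $d$-dimensional space $\Psi^{-1}(\mcd\cap (I_{\mcC})_2)$ and therefore form a basis.
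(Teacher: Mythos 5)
Your proposal is correct and follows essentially the same route as the paper: it combines the dimension count $\dim\Psi^{-1}(\mcd\cap (I_{\mcC})_2)=d$ from Lemma~\ref{dimensionComp} with a verification that each $\Lambda_k$ satisfies the conditions of Equation~\eqref{eq2} (with the crucial cancellation $-\alpha_k/\alpha_k+\alpha_{k+1}/\alpha_{k+1}=0$ at the $i=k$ cross-term) and the same coefficient-of-$x_i$ argument for linear independence using that adjacent normals $\bn_{i-1},\bn_i$ are not parallel. The only cosmetic difference is the order of the two steps and that you leave the $i=k+1$ cross-term implicit (it follows from the same orthogonality $\bn_{k+1}\cdot{\bf v}_{k+2}=0$ you state), which matches the paper's own level of detail there.
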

\begin{proof}
Suppose that $\sum_{k=1}^dc_k\Lambda_k=0$ is a linear
dependence relation
among the $\Lambda_k.$ The coefficient of a variable $x_k$ is 
$$ \frac{1}{\alpha_{k}}(c_{k-1} \bn_k-c_k \bn_{k-1}).$$ By the
dependence relation this must be zero, which implies that $ \bn_{k-1}$ and
$ \bn_k$ are scalar multiples.  This is impossible since they
are normal vectors of adjacent facets of a polyhedral cone.  Hence,
$c_{k-1}=c_k=0$ for all $k$ which shows that the $\Lambda_k$ are independent. 

In the proof of Lemma \ref{dimensionComp} we showed that
$\dim(\Psi^{-1}(\mcd\cap
(I_{\mcC})_2))=d$ and we have just shown $\dim(\langle \Lambda_k\mid
k=1,\dots,d\rangle)=d$. To prove the result, it suffices to show 
$\langle \Lambda_k\mid
k=1,\dots,d\rangle\sset \Psi^{-1}(\mcd\cap (I_{\mcC})_2)$.
The conditions of Equation \eqref{eq2} are required for
$\Lambda_k \in S_1^3$ to lie in $\Psi^{-1}(\mcd\cap (I_{\mcC})_2)$.
We show these conditions are satisfied for each $\Lambda_k$. 
 
Let ${\bf u}_i=0$ if $i\neq k,k+1$, 
${\bf u}_k=- \bn_{k-1}/\alpha_k$, and ${\bf u}_{k+1}= \bn_{k+1}/\alpha_{k+1}$
for each fixed $k$.  Then
$$\Lambda_k=\frac{x_{k+1}}{\alpha_{k+1}} \bn_{k+1}-\frac{x_k}{\alpha_k}
 \bn_{k-1}=\sum_{i=1}^d{\bf u}_ix_i.$$
Since $\bn_{k-1}\cdot {\bf v}_k=0$, $\bn_{k+1}\cdot {\bf v}_{k+1}=0$, and ${\bf u}_i=0$ for $i\neq
k,k+1$ we have that ${\bf u}_i\cdot {\bf v}_i=0$ for
each $i=1,\dots d$. The expression ${\bf u}_i\cdot {\bf v}_{i+1}+{\bf u}_{i+1}\cdot {\bf v}_i$ is
zero for all $i\neq k-1,k,k+1$ simply because ${\bf u}_i=0$ for $i\neq k,k+1$. 
We have
\begin{eqnarray*}
{\bf u}_k\cdot {\bf v}_{k+1}+{\bf u}_{k+1}\cdot {\bf v}_k&=&-\frac{ \bn_{k-1}}{\alpha_k}\cdot
{\bf v}_{k+1}+\frac{ \bn_{k+1}}{\alpha_{k+1}} \cdot {\bf v}_k\\
 &=& -\frac{{\bf v}_{k-1}\times
{\bf v}_k\cdot
{\bf v}_{k+1}}{\alpha_k}+\frac{{\bf v}_{k+1}\times {\bf v}_{k+2}\cdot
{\bf v}_k}{
\alpha_{k+1}} \\
&=&-\frac{ |{\bf v}_{k-1} {\bf v}_k {\bf v}_{k+1}|}{\alpha_k}+\frac{|{\bf v}_{k+1} {\bf v}_{ k+2 } {\bf v}_k| } {\alpha_{k+1}}=0,
\end{eqnarray*}
as $\alpha_j=|{\bf v}_{j-1} {\bf v}_j {\bf v}_{j+1}|$.  It is easy to show
that the expression ${\bf u}_i\cdot {\bf v}_{i+1}+{\bf u}_{i+1}\cdot {\bf v}_i$ is zero for
$i=k\pm 1$. Thus the ${\bf u}_i$ satisfy the
conditions in Equation~\eqref{eq2}, so 
$\Lambda_k\in\Psi^{-1}(\mcd\cap (I_{\mcC})_2)$.
\end{proof}

\begin{thm}\label{Wquads}\txbf{(Wachspress Quadrics)}\\
The Wachspress quadrics $(I_{W_d})_2$ are those elements of $S_2$ which 
are diagonally supported and vanish on $\mathcal{C}$. The 
quadrics $Q_k=\Lambda_k\cdot \tau$ for $k=1,\dots,d$ span $(I_{W_d})_2$. 
\end{thm}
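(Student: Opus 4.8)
The plan is to verify directly that each $Q_k$ vanishes on $W_d$, and then to identify $\spn\{Q_1,\dots,Q_d\}$ with $\mcd\cap(I_{\mcC})_2$ and finish by a dimension count against Lemma~\ref{surjmap}. The two halves of the statement (the characterization and the spanning claim) then fall out simultaneously.

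First I would show $Q_k\in(I_{W_d})_2$ by pulling back along $w_d$. Writing $\tau=\sum_{i}x_i{\bf v}_i$ and expanding $Q_k=\Lambda_k\cdot\tau$ gives
\[
Q_k=\frac{x_{k+1}}{\alpha_{k+1}}\sum_{i}x_i(\bn_{k+1}\cdot{\bf v}_i)-\frac{x_k}{\alpha_k}\sum_{i}x_i(\bn_{k-1}\cdot{\bf v}_i).
\]
Applying $w_d^*$ replaces each $x_i$ by $b_i$, and the reproduction identity $\tau\circ w_d=\mathrm{id}$ (the second barycentric property) yields $\sum_i b_i{\bf v}_i=\lambda\,{\bf p}$ for a scalar $\lambda$. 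Hence $\sum_i b_i(\bn_{k+1}\cdot{\bf v}_i)=\lambda(\bn_{k+1}\cdot{\bf p})=\lambda\,\ell_{k+1}$ and likewise $\sum_i b_i(\bn_{k-1}\cdot{\bf v}_i)=\lambda\,\ell_{k-1}$, using $\ell_j=\bn_j\cdot{\bf p}$ from Definition~\ref{theLinearforms}. Using $b_i=\alpha_i\prod_{j\in\gamma(i)}\ell_j$, both resulting terms collapse to the same factor:
\[
\frac{b_{k+1}\,\ell_{k+1}}{\alpha_{k+1}}=\prod_{j\neq k}\ell_j=\frac{A}{\ell_k}=\frac{b_k\,\ell_{k-1}}{\alpha_k},
\]
since reinserting $\ell_{k+1}$ into $\prod_{j\in\gamma(k+1)}\ell_j$, resp.\ $\ell_{k-1}$ into $\prod_{j\in\gamma(k)}\ell_j$, restores exactly the one missing factor. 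Therefore $w_d^*(Q_k)=\lambda(A/\ell_k-A/\ell_k)=0$, so $Q_k$ vanishes on $W_d$.

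Next I would pin down the span. By construction $Q_k=\Psi(\Lambda_k)$, and Lemma~\ref{basis} identifies $\{\Lambda_1,\dots,\Lambda_d\}$ as a basis of $\Psi^{-1}(\mcd\cap(I_{\mcC})_2)$, a $d$-dimensional space containing $\ker\Psi$. Since $\ker\Psi$ is three-dimensional, the restriction of $\Psi$ to this preimage surjects onto $\mcd\cap(I_{\mcC})_2$ with three-dimensional kernel, so the $Q_k$ span $\mcd\cap(I_{\mcC})_2$, which has dimension $d-3$ by Lemma~\ref{dimensionComp}. Combining with the previous step,
\[
\mcd\cap(I_{\mcC})_2=\spn\{Q_1,\dots,Q_d\}\subseteq(I_{W_d})_2.
\]
By Lemma~\ref{surjmap}, $\dim(I_{W_d})_2=d-3$, so this inclusion of equal-dimensional spaces is an equality. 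This shows at once that the $Q_k$ span $(I_{W_d})_2$ and that $(I_{W_d})_2=\mcd\cap(I_{\mcC})_2$, i.e.\ the Wachspress quadrics are exactly the diagonally supported quadrics vanishing on $\mcC$.

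I expect the main obstacle to be the first step: the vanishing $w_d^*(Q_k)=0$ relies on marrying the reproduction identity $\sum_i b_i{\bf v}_i=\lambda\,{\bf p}$ with the explicit product formula for $b_i$ so that the two terms telescope to the common value $A/\ell_k$. Once that cancellation is in place, the remainder is pure linear algebra using Lemmas~\ref{dimensionComp}, \ref{basis}, and~\ref{surjmap}, already established.
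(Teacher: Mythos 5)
Your proposal is correct and follows essentially the same route as the paper: a direct pullback computation showing $w_d^*(Q_k)=0$ via the reproduction identity $\sum_i b_i{\bf v}_i = \lambda\,{\bf p}$ and the product formula for the $b_i$ (the paper organizes the cancellation as the antisymmetry of ${\bf p}\cdot[\bn_{k+1}(\bn_{k-1}\cdot{\bf p})-\bn_{k-1}(\bn_{k+1}\cdot{\bf p})]$, while you telescope both terms to $A/\ell_k$, a cosmetic difference), followed by the identical linear-algebra finish using Lemmas~\ref{basis}, \ref{dimensionComp}, and~\ref{surjmap}. Your phrasing of the scalar as an unspecified $\lambda$ is in fact slightly more careful than the paper's, since the proportionality factor is the degree-$(d-3)$ adjoint rather than $\sum_i b_i$ itself.
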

\begin{proof}
Let $\txbf{p}$ be the vector $(x,y,z)$.  By definition of Wachspress
coordinates,
$$ \tau(w_d(\txbf{p}))=\sum_{i=1}^d b_i(\txbf{p}){\bf v}_i= 
\txbf{p} \sum_{i=1}^d b_i(\txbf{p}).$$
We have
{\allowdisplaybreaks\begin{align*}
\Lambda_k(w_d(\txbf{p}))&=\frac{b_{k+1}(\txbf{p})}{\alpha_{k+1}} \bn_{k+1} -\frac{b_{k}(\txbf{p})}{\alpha_{k}} \bn_{k-1}\\
&=(\prod_{j\neq k,k+1}\ell_j)\bn_{k+1} - (\prod_{j\neq k-1,k}\ell_j)\bn_{k-1}\\
&=(\prod_{j\neq k-1,k,k+1}\ell_j)(\ell_{k-1} \bn_{k+1} - \ell_{k+1} \bn_{k-1})\\
&= H  [ \bn_{k+1}\ ( \bn_{k-1}\cdot\txbf{p})
- \bn_{k-1}\ ( \bn_{k+1}\cdot\txbf{p})],
\end{align*}}
where $H= \displaystyle{\prod_{j\neq k-1,k,k+1}\ell_j}$. Set $\overline{H}:=H\ \sum_{i=1}^d b_i(\txbf{p})$.
Then we have
{\allowdisplaybreaks\begin{align*}
Q_k(w_d(\txbf{p}))
&=\tau(w_d(\txbf{p}))\cdot\Lambda_k(w_d(\txbf{p}))\\&=
\overline{H}\ \txbf{p}\cdot
[ \bn_{k+1}\ ( \bn_{k-1}\cdot\txbf{p})
- \bn_{k-1}\ ( \bn_{k+1}\cdot\txbf{p})]\\
&=
\overline{H}\
[(  \bn_{k+1}\cdot\txbf{p} )( \bn_{k-1}\cdot\txbf{p})
-( \bn_{k-1}\cdot\txbf{p} )( \bn_{k+1}\cdot\txbf{p})]=0.
\end{align*}}
We have just shown that $ Q_k\in (I_{W_d})_2$. By Lemma~\ref{basis} $\Psi^{-1}(\mcd\cap (I_{\mcC})_2)$ is spanned by the $\Lambda_k$. Observe that
$\langle Q_1,\dots,Q_d\rangle = \Psi(\langle \Lambda_k\rangle)=\mcd\cap
(I_{\mcC})_2$. Thus $\dim(\langle Q_1\dots,Q_d\rangle)=d-3$, 
and by Lemma~\ref{surjmap} $\dim((I_{W_d})_2)=d-3$. 
Therefore, since $\langle Q_1\dots,Q_d\rangle\sset (I_{W_d})_2$, we 
have $\langle Q_1,\dots,Q_d\rangle=(I_{W_d})_2=\mcd\cap (I_{\mcC})_2$.
\end{proof}
%%%%%%%%%%%%%%%%%%%%%%%%%%%%%%%%%%%%%%%%%%%%%%%%%%%%%%%%%%
%%%%%%%%%%%%%%%%%%%%%%%%%%%%%%%%%%%%%%%%%%%%%%%%%%%%%%%%%%%%

\begin{cor}\label{inQuadrics}
The quadrics $\{ \Lambda_2 \cdot \tau,\ldots, \Lambda_{d-2} \cdot \tau \}$ are a
basis for the quadrics in $I_{W_d}$, and
in graded lex order, $\{x_1x_3,\ldots, x_1x_{d-1}\}$ is a basis for
$in_{\prec}(I_{W_d})_2$.
\end{cor}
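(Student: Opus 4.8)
The plan is to deduce both statements from facts already in hand. By Theorem~\ref{Wquads}, $(I_{W_d})_2 = \mcd \cap (I_{\mcC})_2$ has dimension $d-3$ and is spanned by $Q_k = \Lambda_k \cdot \tau$, $k = 1,\ldots,d$, all of which are diagonally supported. Decompose $\mcd = \mcd_1 \oplus \mcd'$, where $\mcd_1 = \spn\{x_1 x_3, \ldots, x_1 x_{d-1}\}$ is spanned by the $d-3$ diagonal monomials divisible by $x_1$ and $\mcd'$ is spanned by the remaining $(d-2)(d-3)/2$ diagonal monomials. The proof of Lemma~\ref{surjmap} shows exactly that $\phi$ carries the monomials spanning $\mcd'$ to a basis of $R_{d-4}$ (Table~\ref{prooftable} is lower triangular), so $\phi|_{\mcd'}$ is an isomorphism and $(I_{W_d})_2 = \ker\phi$ meets $\mcd'$ trivially. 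A dimension count, $(d-3) + (d-2)(d-3)/2 = d(d-3)/2 = \dim\mcd$, then yields the direct sum $\mcd = (I_{W_d})_2 \oplus \mcd'$; consequently the projection $\pi \colon \mcd \to \mcd_1$ along $\mcd'$ restricts to an isomorphism $\pi|_{(I_{W_d})_2}\colon (I_{W_d})_2 \to \mcd_1$.

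The initial ideal statement follows immediately. In graded lex order every monomial divisible by $x_1$ exceeds every diagonal monomial that is not, so for nonzero $g \in (I_{W_d})_2$ the leading term of $g$ lies in $\mcd_1$ whenever $\pi(g) \ne 0$; but $\pi|_{(I_{W_d})_2}$ is injective, so $\pi(g) \ne 0$ always. Hence $\In_{\prec}(I_{W_d})_2 \sset \mcd_1$, and as both spaces have dimension $d-3$ they coincide, giving $\In_{\prec}(I_{W_d})_2 = \spn\{x_1 x_3, \ldots, x_1 x_{d-1}\}$.

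For the basis statement it then suffices, via $\pi|_{(I_{W_d})_2}$, to show that $\pi(Q_2), \ldots, \pi(Q_{d-2})$ form a basis of $\mcd_1$. I would read off the $x_1$-part of each $Q_k = \Lambda_k\cdot\tau$ directly from the expansion in Theorem~\ref{Wquads}: the only $x_1$-monomials arise from pairing $x_1 {\bf v}_1$ against the two terms of $\Lambda_k$, so
\[
\pi(Q_k) = -\frac{|{\bf v}_{k-1}\,{\bf v}_k\,{\bf v}_1|}{\alpha_k}\,x_1 x_k + \frac{|{\bf v}_{k+1}\,{\bf v}_{k+2}\,{\bf v}_1|}{\alpha_{k+1}}\,x_1 x_{k+1},
\]
the first term being absent when $k=2$ since $x_1 x_2$ is not diagonal. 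Ordering rows by $Q_2, \ldots, Q_{d-2}$ and columns by $x_1 x_3, \ldots, x_1 x_{d-1}$ expresses these $d-3$ vectors as a lower triangular matrix whose diagonal entries are the coefficients $|{\bf v}_{k+1}\,{\bf v}_{k+2}\,{\bf v}_1|/\alpha_{k+1}$ of $x_1 x_{k+1}$, for $k=2,\ldots,d-2$.

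The one thing to verify is that these diagonal entries are nonzero, which is the main (though routine) point. Since $\alpha_{k+1}\ne 0$ and $|{\bf v}_1\,{\bf v}_{k+1}\,{\bf v}_{k+2}| = \bn_{k+1}\cdot{\bf v}_1 = \ell_{k+1}({\bf v}_1)$, a vanishing entry would force ${\bf v}_1 \in \V(\ell_{k+1})$; as $v_1 = \V(\ell_d)\cap\V(\ell_1)$, this would place the three distinct lines $\V(\ell_d), \V(\ell_1), \V(\ell_{k+1})$ through the single point $v_1$ (the indices $1, k+1, k+2$ are distinct for $k\in\{2,\ldots,d-2\}$), contradicting our standing hypothesis that no three of the $\V(\ell_i)$ are concurrent. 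Thus the matrix is triangular with nonzero diagonal, so $\pi(Q_2),\ldots,\pi(Q_{d-2})$ span $\mcd_1$; pulling back through $\pi|_{(I_{W_d})_2}$ shows $\{\Lambda_2\cdot\tau, \ldots, \Lambda_{d-2}\cdot\tau\}$ is a basis of $(I_{W_d})_2$, completing the proof.
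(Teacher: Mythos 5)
Your proposal is correct, and its first half takes a genuinely different route than the paper's proof. The paper works directly with the generators: it expands the $x_1$-part of each $\Lambda_i \cdot \tau$, observes that the coefficients ${\bf v}_1 \cdot \bn_j$ involved are nonzero by the non-concurrency hypothesis, and performs explicit elimination ($\Lambda_2\cdot\tau$ has lead term $x_1x_3$, which is used to reduce $\Lambda_3\cdot\tau$ to lead term $x_1x_4$, and so on), thereby exhibiting elements whose lead terms are $x_1x_3,\ldots,x_1x_{d-1}$; this gives the containment $\spn\{x_1x_3,\ldots,x_1x_{d-1}\} \subseteq \In_{\prec}(I_{W_d})_2$, and the dimension count from Lemma~\ref{surjmap} finishes both claims at once. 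You instead extract from the \emph{proof} of Lemma~\ref{surjmap} (the lower-triangularity of Table~\ref{prooftable}) the structural fact that $\phi$ is injective on the span $\mcd'$ of diagonal monomials not divisible by $x_1$, which yields the direct sum $\mcd = (I_{W_d})_2 \oplus \mcd'$ and hence that every nonzero quadric in $I_{W_d}$ has nonzero $x_1$-part; this gives the reverse containment $\In_{\prec}(I_{W_d})_2 \subseteq \spn\{x_1x_3,\ldots,x_1x_{d-1}\}$, closed by the same dimension count. Your second half --- the triangular matrix of $x_1$-coefficients of the $\pi(Q_k)$, with nonzero diagonal entries $\ell_{k+1}({\bf v}_1)/\alpha_{k+1}$ forced by non-concurrency --- is essentially the paper's elimination recast in matrix form, with the same index bookkeeping. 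The trade-off: your argument establishes the cleaner fact that projection onto the $x_1$-diagonal monomials is an isomorphism on $(I_{W_d})_2$, so the constraint on lead terms holds for every element of the space rather than just for the exhibited generators; on the other hand, it depends on the internal content of another proof (that Table~\ref{prooftable} is lower triangular with nonzero diagonal) rather than only on the stated conclusion of Lemma~\ref{surjmap}, which the paper's self-contained reduction avoids.
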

\begin{proof}
Expanding the expression for $\Lambda_i \cdot \tau$ yields
\[
\Lambda_i \cdot \tau = x_1 x_{i+1}(\frac{{\bf v}_1 \cdot {\bf
n}_{i+1}}{\alpha_{i+1}}) -  x_1 x_{i}(\frac{{\bf v}_1 \cdot {\bf
n}_{i-1}}{\alpha_{i}})+ \zeta_i,
\]
where $\zeta_i \in \kk[x_2,\ldots,x_d]$. Since ${\bf n}_i = {\bf v}_i \times {\bf v}_{i+1}$, 
\[
\Lambda_2 \cdot \tau = x_1 x_3(\frac{{\bf v}_1 \cdot {\bf n}_3}{\alpha_3})+
\zeta_2.
\]
Since no three of the lines $\mathbb{V}(l_i)$ are concurrent, ${\bf v}_i \cdot {\bf n}_j$ is nonzero unless $j \in \{i,i+1\}$, so we may use the lead term of
$\Lambda_2 \cdot \tau$ to reduce $\Lambda_3 \cdot \tau$ to $x_1x_4 +
f(x_2,\ldots,x_d)$. Repeating the process proves
that 
\[
\{x_1x_3,\ldots, x_1x_{d-1}\} \subseteq in_{\prec}(I_{W_d})_2.
\]
By Lemma~\ref{surjmap},  $(I_{W_d})_2$ has dimension $d-3$, which
concludes the proof.
\end{proof}

\begin{cor}\label{noLsyz}
There are no linear first syzygies on $(I_{W_d})_2$.
\end{cor}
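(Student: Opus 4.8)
The plan is to show that no nontrivial linear combination $\sum_{k} L_k \cdot Q_{i_k} = 0$ exists, where the $L_k$ are linear forms and the $Q_{i_k}$ range over a basis of the quadrics $(I_{W_d})_2$. By Corollary~\ref{inQuadrics}, a convenient basis is $\{Q_2, \ldots, Q_{d-2}\}$, where $Q_i = \Lambda_i \cdot \tau$, and in graded lex order the corresponding initial terms are $\{x_1 x_3, \ldots, x_1 x_{d-1}\}$. A linear syzygy on $(I_{W_d})_2$ is an element of the kernel of the map $S(-2)^{d-3} \to S(-1)$ in degree $3$; equivalently, a relation $\sum_{i=3}^{d-1} g_i\, Q_i = 0$ with each $g_i \in S_1$. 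The strategy is to pass to initial terms: the key point is that the leading monomials $x_1 x_3, \ldots, x_1 x_{d-1}$ are especially constrained, so I would first argue that any genuine linear syzygy would force a linear syzygy on these initial monomials, then rule the latter out directly.

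First I would record the structure of the initial terms. From the proof of Corollary~\ref{inQuadrics}, after reducing we may take representatives $\widetilde{Q}_i = x_1 x_{i+1} + f_i(x_2, \ldots, x_d)$ with $f_i \in \kk[x_2,\ldots,x_d]_2$, so $\In_{\prec}(\widetilde Q_i) = x_1 x_{i+1}$ for $i = 2, \ldots, d-2$. Suppose $\sum_i g_i \widetilde Q_i = 0$ with $g_i \in S_1$, not all zero. I would examine the monomials in this relation that are divisible by $x_1^2$. Writing $g_i = c_i x_1 + h_i$ with $h_i \in \kk[x_2,\ldots,x_d]_1$, the only source of an $x_1^2 x_m$ term is $c_i x_1 \cdot x_1 x_{i+1}$, since the $f_i$ and $h_i$ involve no $x_1$. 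Hence the $x_1^2$-divisible part of the relation is $x_1^2 \sum_i c_i x_{i+1}$, and this must vanish. As the monomials $x_{i+1}$ for $i = 2, \ldots, d-2$ (namely $x_3, \ldots, x_{d-1}$) are distinct, every $c_i = 0$, so each $g_i \in \kk[x_2,\ldots,x_d]_1$.

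With the $x_1$-coefficients eliminated, I would next collect the terms divisible by exactly $x_1$. These come only from $h_i \cdot x_1 x_{i+1}$ (the $f_i$ contribute nothing divisible by $x_1$), giving $x_1 \sum_i h_i x_{i+1} = 0$, hence $\sum_{i=2}^{d-2} h_i x_{i+1} = 0$ in $\kk[x_2,\ldots,x_d]$. Here the multiplier monomials $x_3, \ldots, x_{d-1}$ are distinct variables, and each $h_i$ is a linear form in $x_2,\ldots,x_d$; a relation $\sum_i h_i x_{i+1} = 0$ is precisely a linear syzygy among distinct variables, whose only solutions are the Koszul syzygies $x_{i+1} \cdot x_{j+1} - x_{j+1}\cdot x_{i+1}$. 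I would argue that such Koszul relations cannot occur: substituting $h_i = \lambda\, x_{j+1}$ and $h_j = -\lambda\, x_{i+1}$ and tracking it back through the full relation $\sum_i g_i \widetilde Q_i = 0$ produces, via the tails $f_i$, monomials not of Koszul type that fail to cancel, forcing $\lambda = 0$. Thus all $h_i = 0$ and the syzygy is trivial.

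I expect the main obstacle to be the last step: the initial-ideal syzygies are not automatically obstructed, since the squarefree monomials $x_1 x_3, \ldots, x_1 x_{d-1}$ do admit Koszul linear syzygies among themselves (they share the common factor $x_1$). The substance of the argument is therefore that the \emph{tails} $f_i$ of the actual quadrics $\widetilde Q_i$ destroy these would-be syzygies; making this precise requires understanding enough of the $f_i$ to see that the Koszul combination $x_{j+1} \widetilde Q_i - x_{i+1} \widetilde Q_j$ has a nonzero remainder. An alternative, cleaner route that avoids this bookkeeping is to use the geometry directly: since $\In_\prec(I_{W_d})_2 = \langle x_1 x_3,\ldots,x_1 x_{d-1}\rangle$ by Corollary~\ref{inQuadrics}, and these monomials all share the factor $x_1$, the ideal they generate is $x_1 \cdot \langle x_3,\ldots,x_{d-1}\rangle$, which is isomorphic as a module to $\langle x_3, \ldots, x_{d-1}\rangle(-1)$; the linear syzygies on the latter are all Koszul, so one checks whether these lift, and compares with the graded betti numbers forced by the Hilbert series from Theorem~\ref{sectionsD}. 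I would present the direct substitution argument as primary, since it is self-contained given the explicit form of the $\widetilde Q_i$ already extracted in Corollary~\ref{inQuadrics}.
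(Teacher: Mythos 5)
Your reduction is correct as far as it goes, and it coincides with the first half of the paper's own argument: take the basis $\widetilde Q_i = x_1x_{i+1} + f_i(x_2,\ldots,x_d)$ from Corollary~\ref{inQuadrics}, split $g_i = c_ix_1 + h_i$, and use the $x_1^2$-divisible and $x_1$-divisible parts of the relation to get $c_i=0$ and $\sum_i h_i x_{i+1}=0$, so that any linear syzygy must be a combination of the Koszul syzygies on the distinct variables $x_3,\ldots,x_{d-1}$. The gap is exactly where you anticipate it: you must still show that no nonzero antisymmetric combination $h_i=\sum_j \lambda_{ij}x_{j+1}$ satisfies the residual relation $\sum_i h_i f_i = 0$, and for this you offer only the assertion that the tails ``produce monomials not of Koszul type that fail to cancel, forcing $\lambda=0$.'' That assertion is the entire content of the corollary, and nothing in your sketch supplies it: the $f_i$ are known only implicitly (via the $\Lambda_i\cdot\tau$ and the reduction in Corollary~\ref{inQuadrics}), and a priori some Koszul combination could annihilate them. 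Your fallback route is also not available: the Hilbert series determines only alternating sums of betti numbers, the betti table of Theorem~\ref{BettiTable} is deduced in the paper \emph{from} this corollary (so invoking it would be circular), and the initial ideal $\langle x_1x_3,\ldots,x_1x_{d-1}\rangle$ genuinely does have linear syzygies, with semicontinuity of betti numbers under passing to initial ideals giving bounds in the wrong direction.

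The paper closes the gap not by computing with the tails but by symmetry: there is nothing special about $x_1$. For each $i$, rerunning the argument of Corollary~\ref{inQuadrics} in graded lex order with $x_i$ largest gives a basis $x_ix_{i+2}+\zeta_{i+2},\ldots,x_ix_{i-2}+\zeta_{i-2}$ whose tails do not involve $x_i$, so by your same two-step reduction the components of any linear syzygy must lie in the span of $\{x_{i+2},\ldots,x_{i-2}\}$ (indices mod $d$), i.e.\ must avoid $x_{i-1},x_i,x_{i+1}$. Since the bases for different $i$ differ by an invertible constant matrix, the span of the components of a fixed syzygy is independent of which basis is used; intersecting the conditions over all $i$ excludes every variable and forces all components to vanish. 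Some such step --- exploiting that the distinguished variable can be cycled around the polygon --- is indispensable, and it is precisely what your proposal is missing.
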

\begin{proof}
By Corollary~\ref{inQuadrics}, we may assume that a basis for $(I_{W_d})_2$ 
has the form 
\vskip .1in
\begin{center}
$\begin{array}{ccc}
x_1x_3 & + & \zeta_3(x_2,\ldots,x_d)\\
x_1x_4 & + & \zeta_4(x_2,\ldots,x_d)\\
x_1x_5 & + & \zeta_5(x_2,\ldots,x_d)\\
\vdots& + & \vdots\\
x_1x_{d-1} & + & \zeta_{d-1}(x_2,\ldots,x_d).
\end{array}$
\end{center}
\vskip .1in
Since the $\zeta_i$ do not involve $x_1$, this implies that any linear first syzygy on $(I_{W_d})_2$ 
must be a linear combination of the Koszul syzygies on $\{x_3,\ldots,x_{d-1}\}$. Now change the
term order to graded lex with $x_i > x_{i+1} > \cdots > x_d > x_1 >x_2 \cdots > x_{i-1}$. In this order,
arguing as in the proof of Corollary~\ref{inQuadrics} shows that we may 
assume a basis for $(I_{W_d})_2$ has the 
form
\vskip .1in
\begin{center}
$\begin{array}{ccc}
x_ix_{i+2} & + & \zeta_{i+2}(x_1,\ldots,\widehat{x_i}, \ldots,x_d)\\
x_ix_{i+3} & + & \zeta_{i+3}(x_1,\ldots,\widehat{x_i}, \ldots,x_d)\\
x_ix_{i+4} & + & \zeta_{i+4}(x_1,\ldots,\widehat{x_i}, \ldots,x_d)\\
\vdots& + & \vdots\\
x_ix_{i-2} & + & \zeta_{i-2}(x_1,\ldots,\widehat{x_i}, \ldots,x_d).
\end{array}$
\end{center}
\vskip .1in
Hence, any linear first syzygy on $(I_{W_d})_2$ must be a combination of Koszul syzygies on 
$x_{i+2},x_{i+3}, \ldots, x_{i-2}$. Iterating this process for the term orders above shows
there can be no linear first syzygies on $(I_{W_d})_2$.
\end{proof}
\subsection{Decomposition of $\V(\langle (I_{W_d})_2 \rangle)$}
We now prove that $\V(\langle (I_{W_d})_2 \rangle) = \mathcal{C} \bigcup W_d$.
The results in \S 4 and \S 5 are independent of this fact. 
%The key is that if a point of $\V(\langle (I_{W_d})_2 \rangle)$ does not lie in the linear space $\mcC$, then it lies in $\mcW$.  
\begin{lem}
\label{normformulas}
 For any $i,j,$ and $k$ we have
$$| \bn_i\  \bn_j\  \bn_k|=|{\bf v}_j\ {\bf v}_k\ {\bf v}_{k+1}|\cdot |{\bf v}_i\ {\bf v}_{i+1}\
{\bf v}_{j+1}|-|{\bf v}_{j+1}\
{\bf v}_k\ {\bf v}_{k+1}|\cdot |{\bf v}_i\
{\bf v}_{i+1}\ {\bf v}_j|$$
\end{lem}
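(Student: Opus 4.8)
The plan is to read the determinant $|\bn_i\ \bn_j\ \bn_k|$ as the scalar triple product $\bn_i\cdot(\bn_j\times\bn_k)$ and to expand the inner cross product of two cross products using the standard vector identity
\[
(\mathbf{a}\times\mathbf{b})\times(\mathbf{c}\times\mathbf{d}) = |\mathbf{a}\ \mathbf{c}\ \mathbf{d}|\,\mathbf{b} - |\mathbf{b}\ \mathbf{c}\ \mathbf{d}|\,\mathbf{a},
\]
where $|\mathbf{x}\ \mathbf{y}\ \mathbf{z}|=\mathbf{x}\cdot(\mathbf{y}\times\mathbf{z})$. This identity follows from the BAC--CAB rule $\mathbf{x}\times(\mathbf{y}\times\mathbf{z})=\mathbf{y}(\mathbf{x}\cdot\mathbf{z})-\mathbf{z}(\mathbf{x}\cdot\mathbf{y})$ after rewriting $(\mathbf{a}\times\mathbf{b})\times\mathbf{e}=-\mathbf{e}\times(\mathbf{a}\times\mathbf{b})$ with $\mathbf{e}=\mathbf{c}\times\mathbf{d}$, and then recognizing $(\mathbf{c}\times\mathbf{d})\cdot\mathbf{a}=|\mathbf{a}\ \mathbf{c}\ \mathbf{d}|$.

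First I would specialize this to $\mathbf{a}={\bf v}_j$, $\mathbf{b}={\bf v}_{j+1}$, $\mathbf{c}={\bf v}_k$, $\mathbf{d}={\bf v}_{k+1}$, using that $\bn_j={\bf v}_j\times{\bf v}_{j+1}$ and $\bn_k={\bf v}_k\times{\bf v}_{k+1}$ by Definition~\ref{theLinearforms}. This gives
\[
\bn_j\times\bn_k = |{\bf v}_j\ {\bf v}_k\ {\bf v}_{k+1}|\,{\bf v}_{j+1} - |{\bf v}_{j+1}\ {\bf v}_k\ {\bf v}_{k+1}|\,{\bf v}_j.
\]
Next I would take the dot product with $\bn_i={\bf v}_i\times{\bf v}_{i+1}$ and apply the elementary relation $({\bf v}_i\times{\bf v}_{i+1})\cdot\mathbf{w}=|{\bf v}_i\ {\bf v}_{i+1}\ \mathbf{w}|$ to the two terms $\mathbf{w}={\bf v}_{j+1}$ and $\mathbf{w}={\bf v}_j$. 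The two scalar factors $|{\bf v}_j\ {\bf v}_k\ {\bf v}_{k+1}|$ and $|{\bf v}_{j+1}\ {\bf v}_k\ {\bf v}_{k+1}|$ pass through the dot product unchanged, and the result is exactly the right-hand side of the claimed formula.

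The only point requiring care is that the quadruple-product identity admits two equally valid expansions: one resolving $\bn_j\times\bn_k$ onto the pair $\{{\bf v}_j,{\bf v}_{j+1}\}$ and the other onto $\{{\bf v}_k,{\bf v}_{k+1}\}$. Only the former produces determinants whose index pattern matches the statement, since dotting ${\bf v}_j$ and ${\bf v}_{j+1}$ against $\bn_i$ is what manufactures the factors $|{\bf v}_i\ {\bf v}_{i+1}\ {\bf v}_{j+1}|$ and $|{\bf v}_i\ {\bf v}_{i+1}\ {\bf v}_j|$. Thus the main (and essentially only) obstacle is bookkeeping: choosing the expansion that isolates $\{{\bf v}_j,{\bf v}_{j+1}\}$ and tracking the sign coming from the antisymmetry of the triple product. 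Beyond that the proof is a single substitution, so no nondegeneracy hypothesis on the ${\bf v}_i$ is needed; the identity holds formally for arbitrary vectors.
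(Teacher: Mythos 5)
Your proof is correct and follows essentially the same route as the paper: both amount to a single BAC--CAB expansion that resolves $\bn_j={\bf v}_j\times{\bf v}_{j+1}$ onto the pair $\{{\bf v}_j,{\bf v}_{j+1}\}$, followed by recognizing the remaining dot products $\bn_i\cdot{\bf v}_j$, $\bn_i\cdot{\bf v}_{j+1}$, $\bn_k\cdot{\bf v}_j$, $\bn_k\cdot{\bf v}_{j+1}$ as triple products of vertices. The only cosmetic difference is the grouping of the scalar triple product --- the paper expands $(\bn_i\times\bn_j)\cdot\bn_k$ directly, while you expand $\bn_i\cdot(\bn_j\times\bn_k)$ via the quadruple-product identity, which you in turn derive from the same BAC--CAB rule --- so the two computations coincide term by term.
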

\begin{proof}
Apply the formulas
$\txbf{a}\times (\txbf{b}\times \txbf{c})=\txbf{b}(\txbf{a}\cdot
\txbf{c})-\txbf{c}(\txbf{a}\cdot \txbf{b})$ and $|\txbf{a}\ \txbf{b} \
\txbf{c}|=\txbf{a}\times \txbf{b}\cdot \txbf{c}$,
\begin{eqnarray*}
| \bn_i\  \bn_j\  \bn_k|&=& \bn_i\times  \bn_j\cdot
 \bn_k
= ( \bn_i\times ({\bf v}_j\times {\bf v}_{j+1}))\cdot \bn_k\\
&=& [{\bf v}_j( \bn_i\cdot {\bf v}_{j+1})-{\bf v}_{j+1}( \bn_i\cdot {\bf v}_j)]\cdot
 \bn_k\\
&=&
({\bf v}_j\cdot  \bn_k)( \bn_i\cdot {\bf v}_{j+1})-({\bf v}_{j+1}\cdot
 \bn_k)( \bn_i\cdot {\bf v}_j) \\
&=&
 |{\bf v}_j\ {\bf v}_k\ {\bf v}_{k+1}|\cdot |{\bf v}_i\ {\bf v}_{i+1}\
{\bf v}_{j+1}|
-|{\bf v}_{j+1}\
{\bf v}_k\ {\bf v}_{k+1}|\cdot |{\bf v}_i\
{\bf v}_{i+1}\ {\bf v}_j|.
\end{eqnarray*}
\end{proof}
\begin{cor}
\label{normformcor}
 $$| \bn_i\  \bn_j\  \bn_{j+1}|=\alpha_{j+1}|{\bf v}_i\ {\bf v}_{i+1}\
{\bf v}_{j+1}|$$
\end{cor}
\begin{proof}
This follows from Lemma \ref{normformulas} and the definition of $\alpha_{j+1}$.
\end{proof}
\begin{cor}
$$ | \bn_{i-1}\  \bn_i\  \bn_{i+1}|=\alpha_i\alpha_{i+1}$$
\end{cor}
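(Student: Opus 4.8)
The plan is to obtain this identity as an immediate specialization of the preceding Corollary~\ref{normformcor}, together with the definition of $\alpha_i$. Corollary~\ref{normformcor} reads $| \bn_i\ \bn_j\ \bn_{j+1}| = \alpha_{j+1}|{\bf v}_i\ {\bf v}_{i+1}\ {\bf v}_{j+1}|$ for arbitrary indices, so the only task is to choose the indices in that formula so that its left-hand side becomes the target $| \bn_{i-1}\ \bn_i\ \bn_{i+1}|$.

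Concretely, I would substitute $i-1$ for the first index and $i$ for the second. With $j = i$ the third normal is $\bn_{j+1} = \bn_{i+1}$, so the left-hand side is exactly $| \bn_{i-1}\ \bn_i\ \bn_{i+1}|$, and Corollary~\ref{normformcor} yields $| \bn_{i-1}\ \bn_i\ \bn_{i+1}| = \alpha_{i+1}\,|{\bf v}_{i-1}\ {\bf v}_i\ {\bf v}_{i+1}|$. It then remains only to identify the bracketed determinant: recalling that $\alpha_i = |{\bf v}_{i-1}\ {\bf v}_i\ {\bf v}_{i+1}|$, this factor is precisely $\alpha_i$. Combining the two gives $| \bn_{i-1}\ \bn_i\ \bn_{i+1}| = \alpha_{i+1}\alpha_i = \alpha_i\alpha_{i+1}$, as claimed.

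There is essentially no obstacle here: the statement is a one-line consequence of Corollary~\ref{normformcor}, and the only thing to verify is that the substitution $(i,j)\mapsto(i-1,i)$ lines up the triples on both sides correctly---in particular that $j+1 = i+1$ when $j = i$, so that no stray reindexing of the $\alpha$ factor or of the ${\bf v}$-determinant is needed. Since all indices are taken modulo $d$, the identity then holds for every $i$ with no boundary cases to check.
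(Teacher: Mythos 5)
Your proof is correct and takes essentially the same route as the paper: the paper derives the identity by specializing Lemma~\ref{normformulas} directly (setting the indices to $i-1$, $i$, $i+1$, whereupon the second term vanishes by repeated columns), while you specialize Corollary~\ref{normformcor}, which is itself just that same specialization carried out one index earlier. The substitution $(i,j)\mapsto(i-1,i)$ and the identification $|{\bf v}_{i-1}\,{\bf v}_i\,{\bf v}_{i+1}|=\alpha_i$ are both valid, so nothing is missing.
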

\begin{proof}
This follows from Lemma \ref{normformulas} and the definition of $\alpha_{i}$
and $\alpha_{i+1}$.
\end{proof}
\begin{lem}
\label{decomplemma}
Let ${\bf x}=[x_1:\cdots:x_d]\in\V(\langle (I_{W_d})_2 \rangle)\setminus\mcC$.
If $\tau({\bf x})$ is a base point $p_{ij}=\bn_i\times \bn_j$, then ${\bf x}$ lies on the
exceptional line $\hat{p}_{ij}$ over $p_{ij}$. 
\end{lem}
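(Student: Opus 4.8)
The plan is to turn the two hypotheses $\,{\bf x}\in\V(\langle (I_{W_d})_2\rangle)\,$ and $\,\tau({\bf x})=p_{ij}\,$ into an explicit linear system in the coordinates $x_k$ and then read off that its solution set is exactly the exceptional line. Write $w_k=x_k/\alpha_k$ and $L_m=\ell_m({\bf P})=\bn_m\cdot{\bf P}$, where ${\bf P}:=\tau({\bf x})=p_{ij}=\bn_i\times\bn_j$. By Theorem~\ref{Wquads} the quadrics $Q_k=\Lambda_k\cdot\tau$ span $(I_{W_d})_2$, and expanding $Q_k$ and evaluating at ${\bf x}$ (using $\sum_m x_m{\bf v}_m={\bf P}$) turns $Q_k({\bf x})=0$ into
\[
w_{k+1}L_{k+1}=w_kL_{k-1},\qquad k=1,\ldots,d.
\]
Since $p_{ij}$ lies on $\V(\ell_i)$ and $\V(\ell_j)$ and, by the no-three-concurrent hypothesis, on no other edge line, we have $L_i=L_j=0$ and $L_m\neq 0$ for $m\notin\{i,j\}$.

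The key device is to set $a_k:=w_kL_{k-1}L_k$. Multiplying the $k$-th relation by $L_k$ gives $a_{k+1}=a_k$, so all the $a_k$ equal one constant $a$; evaluating at $k=i$, where $L_i=0$, forces $a=0$. Hence for every $k\notin\{i,i+1,j,j+1\}$ the factor $L_{k-1}L_k$ is nonzero and $a_k=0$ yields $x_k=0$, confining ${\bf x}$ to the coordinate $\PP^3$ on $x_i,x_{i+1},x_j,x_{j+1}$. Substituting these vanishings back, the relations at $k=i\pm1,\,j\pm1$ reduce to $0=0$ precisely because $L_i=L_j=0$, while the relations at $k=i$ and $k=j$ survive as honest proportionalities $w_{i+1}L_{i+1}=w_iL_{i-1}$ and $w_{j+1}L_{j+1}=w_jL_{j-1}$ (all four coefficients are nonzero, since $j\neq i\pm1$). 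These are two independent linear conditions on the disjoint variable pairs $\{x_i,x_{i+1}\}$ and $\{x_j,x_{j+1}\}$, so inside the $\PP^3$ they cut out a single line $\Lambda\subseteq\PP^{d-1}$; thus ${\bf x}\in\Lambda$.

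It remains to identify $\Lambda$ with the exceptional line $\hat p_{ij}$. I would argue geometrically: since $\tau\circ w_d=\mathrm{id}_{\PP^2}$ where defined, composing $\tau$ with the regular extension $X_d\to W_d$ of $w_d$ recovers the blowdown $\pi:X_d\to\PP^2$, so $\tau(\hat p_{ij})=\pi(E_{p_{ij}})=p_{ij}$ and $\hat p_{ij}\cap\mcC=\emptyset$. Therefore every point of $\hat p_{ij}$ satisfies the hypotheses of the lemma (it lies in $W_d\subseteq\V(\langle (I_{W_d})_2\rangle)$, avoids $\mcC$, and maps to $p_{ij}$), so the previous paragraph gives $\hat p_{ij}\subseteq\Lambda$. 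Finally $D_{d-2}\cdot E_{p_{ij}}=1$ shows $\hat p_{ij}$ is itself a line, and a line contained in the line $\Lambda$ must equal it. Hence $\Lambda=\hat p_{ij}$ and ${\bf x}\in\hat p_{ij}$.

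I expect the main obstacle to be this last identification rather than the algebraic core: the $a_k$-trick is a short and uniform computation, but confirming that the line $\Lambda$ carved out by the quadrics is \emph{exactly} the exceptional line requires the blowup facts that $w_d$ extends to a morphism sending $E_{p_{ij}}$ isomorphically onto a line disjoint from $\mcC$ and that $\tau$ contracts it back to $p_{ij}$. A fully self-contained alternative avoids the blowup geometry by computing the exceptional fibre directly in the local coordinates $(\ell_i,\ell_j)$ near $p_{ij}$: exactly the coordinates $b_i,b_{i+1},b_j,b_{j+1}$ vanish to first order while the rest vanish to higher order, which simultaneously pins down the four surviving coordinates and exhibits the two proportionalities, recovering $\hat p_{ij}=\Lambda$ by hand. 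I would use whichever is cleaner given the conventions already fixed in the paper.
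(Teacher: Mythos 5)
Your algebraic core is correct, and it is essentially the paper's own argument in streamlined form: the paper derives the same cyclic relations $x_{r+1}\,\bn_{r+1}\cdot\tau({\bf x}) = x_r\,\bn_{r-1}\cdot\tau({\bf x})$ and kills the coordinates $x_k$, $k\notin\{i,i+1,j,j+1\}$, by an index-chasing induction starting from $Q_2({\bf x})=0$; your telescoping invariant $a_k=w_kL_{k-1}L_k$ accomplishes the same thing in one stroke, and both arguments end with the same two surviving relations cutting out a line in the coordinate $\PP^3$.

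The genuine flaw is in your identification of $\Lambda$ with $\hat p_{ij}$: the claim that $\hat p_{ij}\cap\mcC=\emptyset$ is false, and in fact it contradicts the other claim in the same sentence, namely that $\tau$ carries $\hat p_{ij}$ to the single point $p_{ij}$. Since $\tau$ is linear, if every point of $\hat p_{ij}\setminus\mcC$ maps to $p_{ij}$, then $\tau$ maps the two-dimensional cone in $\kk^d$ over the line $\hat p_{ij}$ into the one-dimensional span of $\bn_i\times\bn_j$ in $\kk^3$, and such a map must have a nontrivial kernel; hence $\hat p_{ij}$ meets $\mcC=\PP(\ker\tau)$ in exactly one point. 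Concretely, $\hat p_{ij}$ is spanned by a point ${\bf e}$ supported on coordinates $i,i+1$ and a point ${\bf f}$ supported on $j,j+1$, with $\tau({\bf e})=\kappa_1 p_{ij}$ and $\tau({\bf f})=\kappa_2 p_{ij}$ for nonzero $\kappa_1,\kappa_2$, so $\kappa_2{\bf e}-\kappa_1{\bf f}\in\hat p_{ij}\cap\mcC$. Consequently your assertion that "every point of $\hat p_{ij}$ satisfies the hypotheses of the lemma" fails at that one point, and $\hat p_{ij}\subseteq\Lambda$ does not follow as written. The repair is short: points of $\hat p_{ij}$ off $\mcC$ do satisfy the hypotheses and hence lie in $\Lambda$; since $\hat p_{ij}$ is an irreducible curve and $\Lambda$ is Zariski closed, taking closures gives $\hat p_{ij}\subseteq\Lambda$, and your degree argument ($D_{d-2}\cdot E_{p_{ij}}=1$) then forces equality. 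With that fix your write-up is actually more careful than the paper, which simply asserts that $\V(L(1),L(j),x_k\mid k\notin\{1,2,j,j+1\})$ "is" the exceptional line; your proposed fallback computation in the local coordinates $(\ell_i,\ell_j)$ is the honest way to verify that identification, and it exhibits precisely the one point where $\hat p_{ij}$ meets $\mcC$.
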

\begin{proof}
Since indices are cyclic we assume that $i=1$.  Thus
$\tau({\bf x})=p_{1,j}=\bn_1\times \bn_j$ for some $j\notin\{d,1,2\}$.  The relation
$Q_1({\bf x})=\Lambda_1\cdot\tau({\bf x})=\Lambda_1\cdot (\bn_1\times \bn_j)=0$ yields 
\begin{equation}
L(1):= x_2\ \bn_2\cdot p_{1,j}-x_1\ \bn_d\cdot p_{1,j}=0.
\end{equation}
The relation $Q_j({\bf x})=0$ implies
\begin{equation}
 L(j):=x_{j+1}\ |\ \bn_{j+1}\ \bn_1\ \bn_j |- x_j\ |\ \bn_2\ \bn_1\ \bn_j\ |=0.
\end{equation}
Also,
\begin{align*}
 Q_2({\bf x})=(x_3\bn_3-x_2\bn_1)\cdot \bn_1\times \bn_j
=x_3\ |\ \bn_3\ \bn_1\ \bn_j\ |=0,
\end{align*}
implying $x_3=0$ since $|\ \bn_3\ \bn_1\ \bn_j\ |\neq 0$ if $j\neq 3$.
Assume $x_k=0$ for $3\leq k<j-1$.  Note that 
\begin{align*}
 Q_k({\bf x})=  
 (x_{k+1}\bn_{k+1}-x_k\bn_{k-1})\cdot \bn_1\times \bn_j
=x_{k+1}\ |\ \bn_{k+1}\ \bn_1\ \bn_j\ |=0,
\end{align*}
hence $x_{k+1}=0$ since $|\ \bn_{k+1}\ \bn_1\ \bn_j\ |\neq 0$ and by induction
$x_k=0$ for $3\leq k\leq j-1$. 
An analogous argument shows that $x_k=0$ for $j+2\leq k\leq d$.  Hence ${\bf x}$
lies on the line $\V(L(1),L(j),x_k\mid k\notin\{ 1,2,j,j+1 \})$, which is the
exceptional line $\hat{p}_{1,j}$.
\end{proof}
\begin{thm}\label{quadsNogen}
\label{Notcenter}
The subset $\V(\langle (I_{W_d})_2 \rangle)\setminus\mcC$ is contained in $\mcW$.  It
follows that the
variety $\V(\langle (I_{W_d})_2 \rangle)$ has irreducible decomposition $\mcW\cup
\mcC$.
\end{thm}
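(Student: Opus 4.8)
The plan is to analyze an arbitrary point ${\bf x}=[x_1:\cdots:x_d]\in\V(\langle (I_{W_d})_2\rangle)\setminus\mcC$ by projecting it to $\PP^2$ via $\tau$. Since ${\bf x}\notin\mcC$, the image $q:=\tau({\bf x})=\sum_{i=1}^d x_i{\bf v}_i$ is a well-defined point of $\PP^2$, and I would split the argument according to whether or not $q$ is a basepoint of $w_d$, i.e. whether $q\in Y$. The first step is to rewrite the vanishing of the Wachspress quadrics in terms of $q$: since $Q_k=\Lambda_k\cdot\tau$ and $\bn_j\cdot q=\ell_j(q)$ by Definition~\ref{theLinearforms}, the conditions $Q_k({\bf x})=0$ read
\[
\frac{x_{k+1}}{\alpha_{k+1}}\,\ell_{k+1}(q)=\frac{x_k}{\alpha_k}\,\ell_{k-1}(q)\qquad(k=1,\dots,d),
\]
which I will call the \emph{projected relations}.

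If $q\in Y$, then $q=p_{ij}=\bn_i\times\bn_j$ for a unique pair of edges, and Lemma~\ref{decomplemma} already places ${\bf x}$ on the exceptional line $\hat p_{ij}$. Here I would invoke \S2: the map $w_d$ extends to the embedding of $X_d$ in $\PP^{d-1}$ given by the very ample divisor $D_{d-2}$ (Corollary~\ref{isAmp}), under which the exceptional curve over $p_{ij}$ is carried isomorphically onto $\hat p_{ij}$. Thus $\hat p_{ij}\sset\mcW$ and ${\bf x}\in\mcW$, disposing of the basepoint case.

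The heart of the argument is the case $q\notin Y$, where $w_d(q)$ is a genuine point of $\mcW$ satisfying $\tau(w_d(q))=q$. Writing $b_i(q)=\alpha_i\prod_{m\neq i-1,i}\ell_m(q)$, one checks immediately that $w_d(q)$ satisfies the projected relations; what must be shown is that these relations admit no other solution with image $q$. When $q$ lies on no edge line, every $\ell_m(q)\neq 0$, and the projected relations telescope to show that the ratios $x_k/b_k(q)$ are all equal, forcing ${\bf x}=w_d(q)$. The delicate part is the degenerate positions: if $\ell_j(q)=0$ for exactly one $j$, I would use the projected relations to propagate vanishing and conclude $x_m=0$ for every $m\notin\{j,j+1\}$, while the single surviving relation pins $x_{j+1}/x_j$ to the Wachspress ratio $b_{j+1}(q)/b_j(q)$; and if $q$ is a vertex $v_m\in\nu(P_d)$, so that $\ell_{m-1}(q)=\ell_m(q)=0$, the same propagation collapses ${\bf x}$ to the coordinate point $e_m=w_d(v_m)$. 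In every case the support and the ratios on the support of ${\bf x}$ coincide with those of $w_d(q)$, hence ${\bf x}=w_d(q)\in\mcW$. I expect this bookkeeping over points lying on one or two edge lines---verifying that the solution set of the projected relations collapses to a single Wachspress point with exactly the right support---to be the main obstacle, the generic and basepoint cases being comparatively quick.

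Together these cases give the containment $\V(\langle (I_{W_d})_2\rangle)\setminus\mcC\sset\mcW$. To deduce the decomposition, I would first note the reverse containments $\mcW\sset\V(\langle (I_{W_d})_2\rangle)$, which is immediate from $(I_{W_d})_2\sset I_{W_d}$, and $\mcC\sset\V(\langle (I_{W_d})_2\rangle)$, which follows because Theorem~\ref{Wquads} identifies $(I_{W_d})_2=\mcd\cap(I_{\mcC})_2\sset I_{\mcC}$, so every Wachspress quadric vanishes on $\mcC=\V(I_{\mcC})$. Hence $\mcW\cup\mcC\sset\V(\langle (I_{W_d})_2\rangle)$, and the containment just established gives the opposite inclusion, so $\V(\langle (I_{W_d})_2\rangle)=\mcW\cup\mcC$. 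Since $\mcW$ is irreducible as the closure of the image of $\PP^2$, $\mcC$ is a linear subspace and so irreducible, and neither is contained in the other, this is the irreducible decomposition.
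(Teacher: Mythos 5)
Your argument is correct, but the engine driving the main case is genuinely different from the paper's. Both treatments dispose of the basepoint case identically, via Lemma~\ref{decomplemma} together with the fact that the exceptional lines are images of exceptional curves of $X_d$ and hence lie on $\mcW$ (you are actually more explicit about this last point than the paper is). For the remaining case, however, the paper never stratifies by the position of $q=\tau({\bf x})$: it proves the single identity $w_d(\tau({\bf x}))=\mcA(\tau({\bf x}))\,{\bf x}$ (Equation~\eqref{gg}), valid for \emph{every} ${\bf x}$ in the quadric locus off $\mcC$, via the telescoping computation $\sigma_3=\sigma_4=\cdots=\sigma_d$, which rests on the adjoint polynomial and Warren's triangulation-independence theorem (Theorem~\ref{independent}); then $\mcA(\tau({\bf x}))\neq 0$ puts ${\bf x}$ on $\mcW$ at once, while $\mcA(\tau({\bf x}))=0$ forces $\tau({\bf x})$ to be a basepoint and routes to Lemma~\ref{decomplemma}. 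You instead fix $q$ and show that the fiber of $\tau$ over $q$ inside the quadric locus is the single projective point $w_d(q)$, by elementary propagation through the cyclic linear relations, stratified by how many edge lines contain $q$ (none, one, or two, the two-line case splitting into vertex versus basepoint; no further cases occur since no three lines are concurrent). I checked each stratum and the bookkeeping works: for instance, when $q$ lies on exactly one line $\V(\ell_j)$, the relations $k=j\pm 1$ kill $x_{j-1}$ and $x_{j+2}$, the relations with $k\notin\{j-1,j,j+1\}$ have nonzero coefficients and spread the vanishing to all $m\notin\{j,j+1\}$, and the relation $k=j$ pins $x_{j+1}:x_j=b_{j+1}(q):b_j(q)$. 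So your route trades the paper's adjoint machinery for case analysis: it is more elementary and self-contained (no adjoint, no Theorem~\ref{independent}), whereas the paper's identity is uniform, case-free, and conceptually ties the quadric locus to the adjoint/linear-precision structure that recurs in \S 4. One caveat, which applies equally to the paper's proof and yours: the final claim that neither $\mcW$ nor $\mcC$ contains the other is asserted, not proved, and it actually fails for small $d$ (for $d=5$, $W_5$ is a quartic del Pezzo surface, the complete intersection of the two Wachspress quadrics, so $\mcC\subseteq \mcW$ and the union is just $\mcW$); for $d\geq 6$ it does hold, by comparing dimensions, and degrees when $\dim\mcC=2$. The set-theoretic equality you establish is what the rest of the paper uses, so this does not affect anything downstream.
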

\begin{proof}
Let ${\bf x}=[x_1:\cdots:x_d]\in\V(\langle (I_{W_d})_2
\rangle)\setminus\mcC$.
The Wachspress quadrics give the relations
\begin{align}
\label{qrelations}
x_{r+1}\ \bn_{r+1}\cdot\tau=x_r\ \bn_{r-1}\cdot\tau 
\end{align}
for each $r=1,\dots d$.
\vspace{10pt}
\begin{figure}[htp!]
 \centering
\psfrag{k-2}{\Large$\bn_{k-2}$}
\psfrag{k-1}{\Large$\bn_{k-1}$}
\psfrag{k}{\Large$\bn_k$}
\includegraphics[scale=.27]{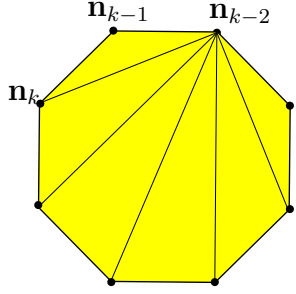}
 %triangulationForAdjointFactor.eps: 0x0 pixel, 300dpi, 0.00x0.00 cm, bb=0 0 390
%396
\caption{Triangulation used for adjoint}
\label{triForAdj}
\end{figure} 
By Theorem~\ref{independent}, the adjoint is independent of triangulation, 
so we use $\mcA$ to denote the adjoint, specifying the triangulation if necessary. We now show for each $k\in \{1,\dots,d\}$, 
$b_k(\tau({\bf x}))=\mcA(\tau({\bf x}))\,x_k$, where the
triangulation above is used for the adjoint $\mcA$.
It follows from the uniqueness of Wachpress coordinates that the denominator 
$\sum_{i=1}^d b_i$ of $\beta_i$ is the adjoint of ${\bf P}_d^*$, so it follows that 
\begin{equation}
\label{gg}
w_d(\tau({\bf x}))=\mcA(\tau({\bf x}))\,{\bf x}.
\end{equation}
Provided $\mcA(\tau({\bf x}))\neq 0$, the result follows since
$w_d(\tau({\bf x}))\in \p^{d-1}$ is a
nonzero scalar multiple of ${\bf x}$, hence ${\bf x}$ is in
the image of the Wachspress map and thus lies on $\mcW$. If ${\bf x}\in\V(\langle (I_{W_d})_2 \rangle)\setminus\mcC$
and $\mcA(\tau({\bf x}))=0$, then by Equation \eqref{gg}  $w_d(\tau({\bf x}))=0$ and hence
$\tau({\bf x})$ is a basepoint of $w_d$.  Thus $\tau({\bf x})=\bn_i\times \bn_j$
for some diagonal pair $(i,j)$.  By Lemma \ref{decomplemma} ${\bf x}$ lies on an
exceptional line and hence lies on $\mcW$. To prove the claim, note
that since all indices are cyclic it suffices to assume
$k=3$. Let $|\bn_i \bn_j \bn_k| = |\bn_{ijk}|$ and 
\begin{equation*}
\bn_{i_1,\dots,i_m}\cdot\tau:=\prod_{j=1}^m (\bn_{i_j}\cdot\tau).
\end{equation*}
This is the product of $m$ linear forms in $S$, and
with this notation 
\[
b_3(\tau)=\bn_{1,4,5,\dots,d}\cdot\tau.
\]
For each $r\in\{3,\dots,d\}$ define 
\begin{eqnarray*}
\sigma_r:&=& (\bn_{4,\dots,r}\cdot\tau)\
\bn_1\cdot\bigg[\sum_{i=3}^{r}{\bf v}_{i}(\bn_{r+1,\dots,
d}\cdot\tau)\ x_{i}+\\
&&\ \ \ \sum_{i=r+1}^{d}{\bf v}_{i}(\bn_{r-1,\dots,i-2 } \cdot
\tau)\ (\bn_{i+1,\dots,d} \cdot\tau)\ x_r\bigg],
\end{eqnarray*}
where we set $\bn_{i,\dots,j}\cdot\tau =1$ if $j<i$.  We show
$x_3 \mcA(\tau({\bf x}))=\sigma_3=\sigma_d=b_3(\tau({\bf x}))$.
First, we show $\sigma_3=x_3\,\mcA(\tau)$: to see this, note that 
%{\allowdisplaybreaks
\begin{align}\label{deceq}
x_3 \mcA(\tau) = |\bn_{123}|\ (\bn_{4\cdots
d}\cdot\tau)\ x_3+\!\!\ \sum_{i=4}^{d}|\bn_{1,i-1,i}|\ (\bn_{2,\dots,i-2} \cdot \tau)\
(\bn_{i+1,\dots,d} \cdot\tau)\ x_3,
\end{align}
where we express the adjoint $\mcA$ using the triangulation in Figure
\ref{triForAdj}.  Applying the scalar triple product to $|\bn_{123}|$ and $|\bn_{1,i-1,i}|$ in the expression \eqref{deceq} yields, 
\begin{align}
\label{deceq2}
\bn_1\!\cdot\!(\bn_2\times \bn_3)\,(\bn_{4\cdots
d}\cdot\tau)\,x_3 \!+\!
 \sum_{i=4}^{d}\!\bn_1\cdot(\bn_{i-1}\times
\bn_{i})(\bn_{2,\dots,i-2}\cdot
\tau)\,(\bn_{i+1,\dots,d} \cdot\tau) x_3.
\end{align}
Factoring an $\bn_1$ and noting that $\bn_i\times
\bn_{i+1}={\bf v}_{i+1}$, \eqref{deceq2} becomes
\begin{align*}
\bn_1\cdot\bigg[{\bf v}_3(\bn_{4\cdots
d}\cdot\tau)\ x_3+ 
\,\sum_{i=4}^{d}{\bf v}_{i}(\bn_{2,\dots,i-2}\cdot
\tau) (\bn_{i+1,\dots,d} \cdot\tau)\,x_3\bigg]=\sigma_3.
\end{align*}
Now we show $\sigma_d=b_3(\tau)$.  Since $\bn_{d+1,\dots,d}\cdot\tau=1$
{\begin{align}
\label{deceq3}
\sigma_d=(\bn_{4,\dots,d}\cdot\tau) \bn_1\cdot\bigg(\sum_{i+3}^d {\bf v}_i
(\bn_{d+1,\dots,d}\cdot\tau) x_i\bigg)
=&
(\bn_{4,\dots,d}\cdot\tau) \bn_1\cdot\bigg(\sum_{i+3}^d {\bf v}_i
x_i\bigg).
\end{align}
Observing that
$\displaystyle{\bn_1\cdot\sum_{i=1}^2x_i{\bf v}_i=0}$ we see that \eqref{deceq3} is
\begin{align*}
(\bn_{4,\dots,d}\cdot\tau)\ 
(\bn_1\cdot\tau)
= \bn_{1,4,\dots,d}\cdot\tau=b_3(\tau).
\end{align*}}
We now claim that for $r\in\{3,\dots,d-1\}$ we have $\sigma_r=\sigma_{r+1}$.
Indeed,
\begin{align}
\sigma_r=&\,(\bn_{4,\dots,r}\cdot\tau)\,\bn_1\cdot\bigg[\sum_{i=3}^{r}{\bf v}_{i}(\bn_
{ r+1 ,\dots,
d}\cdot\tau)\ x_{i}+\\ \nonumber
&\,\sum_{i=r+1}^{d}{\bf v}_{i}(\bn_{r,\dots,i-2 } \cdot
\tau)\,(\bn_{i+1,\dots,d} \cdot\tau) (\bn_{r-1}\cdot\tau)\ x_r\,\bigg]\\ 
\nonumber
&=(\bn_{4,\dots,r}\cdot\tau)\,\bn_1\cdot\bigg[\sum_{i=3}^{r}{\bf v}_{i}(\bn_{r+1,\dots
,
d}\cdot\tau)\,x_{i}+\\ \nonumber
&\,\sum_{i=r+1}^{d}{\bf v}_{i}(\bn_{r,\dots,i-2 } \cdot
\tau)\,(\bn_{i+1,\dots,d} \cdot\tau)\,(\bn_{r+1}\cdot\tau)\ x_{r+1}\bigg],
\end{align}
where we have applied \eqref{qrelations} to the last term. Factoring 
out $\bn_{r+1}\cdot\tau$ yields
\begin{align*}
&\,(\bn_{4,\dots,r+1}\cdot\tau)\,\bn_1\cdot\bigg[\sum_{i=3}^{r}{\bf v}_{i}(\bn_{r+2,
\dots,
d}\cdot\tau)\,x_{i}+
 \sum_{i=r+1}^{d}{\bf v}_{i}(\bn_{r,\dots,i-2 } \cdot
\tau)\,(\bn_{i+1,\dots,d} \cdot\tau)\,x_{r+1}\bigg] 
\end{align*}
Lastly, since the expressions in both summations agree at the index $i=r+1$ we
can shift the indices of summation,
\begin{align*}
&\ 
(\bn_{4,\dots,r+1}\cdot\tau)\,\bn_1\cdot
\bigg[\sum_{i=3}^{r+1}{\bf v}_{i}(\bn_{r+2,\dots,
d}\cdot\tau)\ x_{i}+
 \sum_{i=r+2}^{d}{\bf v}_{i}(\bn_{r,\dots,i-2 } \cdot
\tau)\,(\bn_{i+1,\dots,d} \cdot\tau)\,x_{r+1}\bigg],
\end{align*}
which is precisely $\sigma_{r+1}$, proving the claim. The claim shows that
$\sigma_3=\sigma_{d}$, hence \eqref{gg} holds and so ${\bf x}$ lies in
$\mcW$ if $\mcA(\tau({\bf x}))\neq 0$. 
\end{proof}
\section{The Wachspress cubics}\label{sec:four}
Theorem~\ref{quadsNogen} shows that the Wachspress quadrics do not suffice to
cut out the Wachspress variety $\mcW$.  We now construct cubics, the
\emph{Wachspress cubics}, that lie in $I_{W_d}$ and do not arise from 
the Wachspress quadrics. These cubics are determinants
of $3\times 3$ matrices of linear forms. The key to showing that they are 
in $I_{W_d}$ is to write them as a difference of adjoints 
$\mathcal{A}_{T_1(C)} - \mathcal{A}_{T_2(C)}$, where $T_1(C)$ and $T_2(C)$ are
two different triangulations of a subcone $C$ of the dual cone 
${\bf P}_d^*$. By Theorem~\ref{independent}, the difference is zero, 
so the cubic is in $I_{W_d}$.
 
\subsection{Construction of Wachspress Cubics}
As in Lemma~\ref{basis}, let
$$\Lambda_r=\frac{x_{r+1}}{\alpha_{r+1}} \bn_{r+1}-\frac{x_r}{\alpha_r} \bn_{r-1}.$$ 
\begin{thm}
\label{cubicgens}
If $i \ne j \ne k \ne i$, then $w_{i,j,k}:=|\Lambda_i,\Lambda_j,\Lambda_k| \in I_{W_d}$.
\end{thm}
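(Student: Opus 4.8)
The plan is to prove the cubic vanishes on $W_d$ by pulling it back along $w_d$: since $W_d$ is the closure of the image of $w_d$, it suffices to check that $w_{i,j,k}(w_d(\txbf{p}))=0$ for all $\txbf{p}$. The necessary computation is already carried out inside the proof of Theorem~\ref{Wquads}, where it is shown that for each $r$
\[
\Lambda_r(w_d(\txbf{p})) = H_r\,\bigl[(\bn_{r-1}\cdot\txbf{p})\,\bn_{r+1} - (\bn_{r+1}\cdot\txbf{p})\,\bn_{r-1}\bigr],
\]
with $H_r=\prod_{j\neq r-1,r,r+1}\ell_j$ a product of $d-3$ linear forms in $\txbf{p}$ that factors out of the $r$-th column as a common scalar. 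First I would record this formula for $r=i,j,k$.

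The key observation is that the vector triple product identity $\txbf{a}\times(\txbf{b}\times\txbf{c})=\txbf{b}(\txbf{a}\cdot\txbf{c})-\txbf{c}(\txbf{a}\cdot\txbf{b})$ rewrites each bracket as $\txbf{p}\times(\bn_{r+1}\times\bn_{r-1})$, so that
\[
\Lambda_r(w_d(\txbf{p})) = H_r\,\bigl(\txbf{p}\times(\bn_{r+1}\times\bn_{r-1})\bigr).
\]
Every such vector is orthogonal to $\txbf{p}$. Hence the three columns $\Lambda_i(w_d(\txbf{p})),\Lambda_j(w_d(\txbf{p})),\Lambda_k(w_d(\txbf{p}))$ all lie in the two-dimensional subspace $\txbf{p}^{\perp}\subseteq\kk^3$, and being three vectors in a plane they are linearly dependent. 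Therefore
\[
w_{i,j,k}(w_d(\txbf{p})) = |\,\Lambda_i(w_d(\txbf{p}))\ \ \Lambda_j(w_d(\txbf{p}))\ \ \Lambda_k(w_d(\txbf{p}))\,| = 0
\]
identically, and since this holds for all $\txbf{p}$ we conclude $w_{i,j,k}\in I_{W_d}$.

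I expect no serious obstacle here: the only point requiring care is verifying that the scalars $H_r$ split off cleanly so that each column is exactly a cross product with $\txbf{p}$, after which the orthogonality, and hence the vanishing of the determinant, is immediate. One can also avoid the ``three vectors in a plane'' phrasing by computing $(\txbf{p}\times\txbf{v})\times(\txbf{p}\times\txbf{w})=|\,\txbf{p}\ \txbf{v}\ \txbf{w}\,|\,\txbf{p}$ and dotting with $\txbf{p}\times\txbf{u}$, which exhibits the scalar triple product as zero directly. This is a shortcut relative to the adjoint route flagged before the statement, which would instead exhibit $w_{i,j,k}$ as a difference $\mathcal{A}_{T_1(C)}-\mathcal{A}_{T_2(C)}$ of the adjoint evaluated on two triangulations of a subcone $C\subseteq{\bf P}_d^{*}$ and kill it via Theorem~\ref{independent}; for that approach the main difficulty is constructing the two triangulations, a step the orthogonality argument avoids entirely.
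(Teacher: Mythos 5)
Your proof is correct, and it takes a genuinely different and much shorter route than the paper's. You reuse the factorization $\Lambda_r(w_d(\txbf{p})) = H_r\,[\bn_{r+1}(\bn_{r-1}\cdot\txbf{p}) - \bn_{r-1}(\bn_{r+1}\cdot\txbf{p})]$, which indeed appears verbatim in the paper's proof of Theorem~\ref{Wquads}, then note via the triple-product identity that each column is $H_r\,\txbf{p}\times(\bn_{r+1}\times\bn_{r-1})$; all three columns are therefore annihilated by the functional $\txbf{v}\mapsto\txbf{p}\cdot\txbf{v}$, so the determinant vanishes, and since the identities involved are polynomial identities with integer coefficients, $w_{i,j,k}\circ w_d$ is the zero polynomial over any field, whence $w_{i,j,k}$ vanishes on the image of $w_d$ and on its closure $W_d$. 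The paper instead expands the $3\times 3$ determinant into eight monomials (Equation~\eqref{PCeqn}), splits into cases according to adjacencies among $i,j,k$ (four cases for $T$-triples, plus two further cases when the triple contains consecutive indices), and in each case identifies the residual factor, after pulling out common linear forms, as a difference $\mathcal{A}_{T_1(C)}-\mathcal{A}_{T_2(C)}$ of adjoints of two triangulations of a subcone of ${\bf P}_d^*$, which vanishes by Warren's Theorem~\ref{independent}. Your argument is uniform in $(i,j,k)$, needs no case analysis, and never invokes Warren's theorem. What the paper's longer route buys is structural information: it exhibits the Wachspress cubics explicitly as adjoint differences (the geometric picture advertised at the start of \S 4), and its expansion displays the monomial support and the coefficients $|\bn_{i\pm 1}\,\bn_{j\pm 1}\,\bn_{k\pm 1}|$, which is what underlies the later observation (used in Theorem~\ref{GBthm}) that $\In_{\prec}(w_{i,j,k})=x_ix_jx_k$ for $i<j<k$ with $k\ne d$; with your proof, that initial-term computation would still need to be carried out separately.
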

\begin{proof}
We break the proof into two parts. First, suppose 
no pair of $(i,j,k)$ corresponds to an edge 
of $P_d$. We call such an $(i,j,k)$ a $T$-triple.
A direct calculation shows that if $(i,j,k)$ is a $T$-triple, 
then evaluating the monomial $x_ix_jx_k$ at Wachspress coordinates yields
\begin{equation}
x_ix_jx_k(w_d)=b_i b_j b_k=A^2\prod_{m\in\gamma(i,j,k)} \ell_m,
\end{equation}
where $\gamma(i,j,k)$ is as in Definition~\ref{gammaSet}.
Since there are no $T$-triples if $d< 6$, we may assume $d\geq 6$. 
Changing variables by replacing $x_i$ with $x_i/\alpha_i$, we may ignore
the constants $\alpha_i$. Using the definition of the $\Lambda$'s, observe
that $w_{i,j,k}=$
\begin{equation}\label{PCeqn}
\begin{array}{cccc}
|\bn_{i+1}\ \bn_{j+1}\ \bn_{k+1}|x_{i+1}x_{j+1}x_{k+1} &- &|\bn_{i+1}\
\bn_{j+1}\ \bn_{k-1}|x_{i+1}x_{j+1}x_{k} &-\\
\!\!\!\!\!\! |\bn_{i+1}\ \bn_{j-1}\ \bn_{k+1}|x_{i+1}x_{j}x_{k+1}
&+&\!\!\!\!\!\! |\bn_{i+1}\ \bn_{j-1}\ \bn_{k-1}|x_{i+1}x_{j}x_{k}&-\\
\!\!\!\!\!\! |\bn_{i-1}\ \bn_{j+1}\ \bn_{k+1}|x_{i}x_{j+1}x_{k+1}
&+&\!\!\!\!\!\! |\bn_{i-1}\ \bn_{j+1}\ \bn_{k-1}|x_{i}x_{j+1}x_{k}&+\\
\!\!\!\! \!\!\!\!\!\!\!\! |\bn_{i-1}\ \bn_{j-1}\ \bn_{k+1}|x_{i}x_{j}x_{k+1}
&-&\!\!\!\! \!\!\!\!\!\!\!\! |\bn_{i-1}\ \bn_{j-1}\ \bn_{k-1}|x_{i}x_{j}x_{k}.&
\end{array}
\end{equation}
There are several situations to consider, depending on various 
possibilities for interactions among the indices. Interactions may occur 
if $i+1=j-1$ or $j+1=k-1$ or $k+1=i-1$, so there are four cases:
\begin{center}
$\begin{array}{cccc}
\mbox{1. All three hold } & \mbox{2. Two hold } &  \mbox{3. One holds }&
\mbox{4. None hold}.
\end{array}$
\end{center}

\txbf{Case 1:}
The indices $(i,j,k)$ satisfy Case 1 if and only if $d=6$.  For
$d=6$ there are only two $T$-triples; $(1,3,5)$ and $(2,4,6)$.
We show that $w_{1,3,5}$ vanishes on Wachspress coordinates; the 
case of $w_{2,4,6}$ is similar. All but two of the determinants 
in Equation~\eqref{PCeqn} vanish, leaving
\begin{align}
\label{theabove}
w_{1,3,5}=|\Lambda_1,\Lambda_3,\Lambda_5|=|\bn_{2}\ \bn_{4}\
\bn_{6}|x_{2}x_{4}x_{6}-|\bn_{6}\ \bn_{2}\
\bn_{4}|x_{1}x_{3}x_{5}.
\end{align}
Notice that the coefficients are equal, and we conclude by showing
that 
\[
x_1x_3x_5-x_2x_4x_6
\]
vanishes on Wachspress coordinates.  The monomials
$x_1x_3x_5$ and $x_2x_4x_6$ evaluated at Wachspress coordinates are $b_1b_3b_5$
and $b_2b_4b_6$, respectively. Both of these are equal to $A^2$, so
$x_1x_3x_5-x_2x_4x_6$ vanishes on Wachspress coordinates.\\

\pagebreak
\txbf{Case 2:}  We can assume without loss of generality $i+1\neq j-1,$
$j+1=k-1$, and $k+1=i-1$. Four coefficients vanish in Equation~\ref{PCeqn}, yielding
\begin{eqnarray*}
 w_{ijk}&=&|\bn_{i+1}\ \bn_{j+1}\ \bn_{i-1}|x_{i+1}x_{j+1}x_{i-1}\\
       &-&|\bn_{i+1}\ \bn_{j-1}\ \bn_{i-1}|x_{i+1}x_{j}x_{i-1}\\
       &+&|\bn_{i+1}\ \bn_{j-1}\ \bn_{j+1}|x_{i+1}x_{j}x_{i-2}\\
       &-&|\bn_{i-1}\ \bn_{j-1}\ \bn_{j+1}|x_{i}x_{j}x_{i-2}.
\end{eqnarray*}
Evaluating this at Wachspress coordinates yields,
\begin{eqnarray*}
\label{Quadeval}
w_{ijk}\circ w_d&=&
|\bn_{i+1} \bn_{j+1} \bn_{i-1} |\prod_{m\in\gamma(i+1,j+1,i-1)}\ell_m+
|\bn_{i+1}
\bn_{j-1} \bn_{i-1}|\prod_{m\in\gamma(i+1,j,i-1)} \ell_m-\\
&&|\bn_{i+1} \bn_{j-1} \bn_{j+1}|\prod_{m\in\gamma(i+1,j,i-1)}\ell_m  -
|\bn_{i-1}
\bn_{j-1} \bn_{j+1}|\prod_{m\in\gamma(i,j,i-1)}\ell_m\\
&=&A^2\ \bigg( \prod_{m\in\gamma(i-1,i+1,j+1,j)}\ell_m \bigg) (|\bn_{i+1}
\bn_{j+1}
\bn_{i-1}
|\ell_{j-1}-|\bn_{i+1}\
\bn_{j-1}\ \bn_{i-1}|\ell_{j+1}+\\
&&|\bn_{i+1}\ \bn_{j-1}\ \bn_{j+1}|\ell_{i-1}-|\bn_{i-1}
\bn_{j-1} \bn_{j+1}|\ell_{i+1})\\
&=&A^2\ \bigg( \prod_{m\in\gamma(i-1,i+1,j+1,j)}\ell_m \bigg) 
\bigg[ \big(|\bn_{i+1} \bn_{j+1} \bn_{i-1}|\ell_{j-1}+
|\bn_{i-1} \bn_{j+1} \bn_{j-1}|\ell_{i+1}\big)- \\
&&\big(|\bn_{i+1}\ \bn_{j-1}\ \bn_{i-1}|\ell_{j+1}+
|\bn_{i+1}\ \bn_{j+1}\ \bn_{j-1}|\ell_{i-1}\big)\bigg], \mbox{ where }A=\prod_{i=1}^d\ell_i.
\end{eqnarray*}
%where $A=\prod_{i=1}^d\ell_i$.
%\vspace{10pt}
\begin{figure}[htp]
\centering
\psfrag{v1}{\Large$\bn_{i+1}$}
\psfrag{v2}{\Large$\bn_{j+1}$}
\psfrag{v3}{\Large$\bn_{j-1}$}
\psfrag{v4}{\Large$\bn_{i-1}$}
\includegraphics[scale=.6]{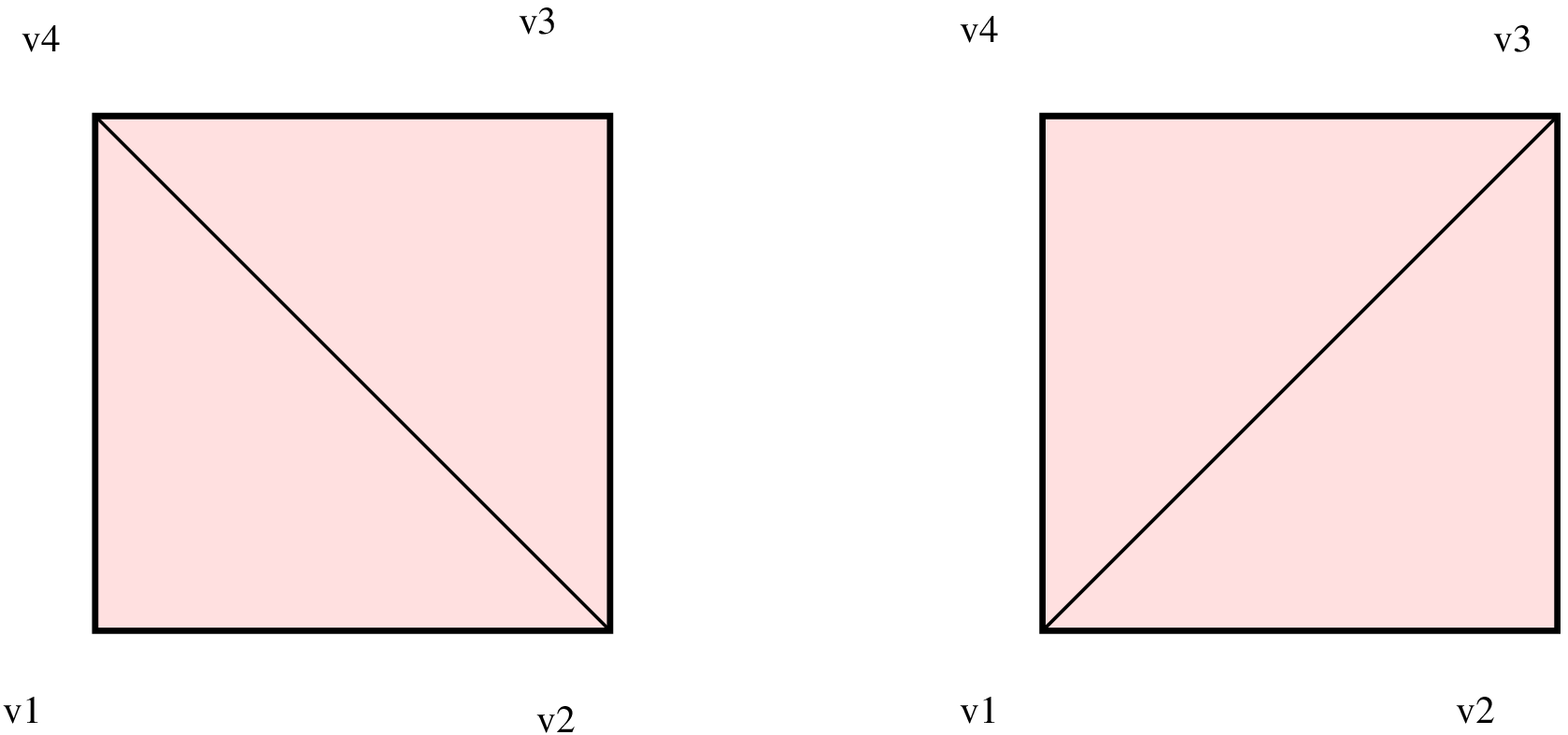}
%case2trang.eps: 0x0 pixel, 300dpi, 0.00x0.00 cm, bb=0 0 477 223
\caption{Case 2 triangulation}
\label{case2fig}
\end{figure}

The last factor is the difference of two adjoints with respect to the 
triangulations of the quadrilateral in Figure~\ref{case2fig}. 
The vanishing can be seen directly: 
write ${\bf n}_1,\ldots, {\bf n}_4$ for 
${\bf n}_{i-1},{\bf n}_{i+1}, {\bf n}_{j-1},{\bf n}_{j+1}$. Then the last factor is
\[
|\bn_{2} \bn_{3} \bn_{4}|\ell_{1}-
|\bn_{1} \bn_{3} \bn_{4}|\ell_{2}+
|\bn_{1} \bn_{2} \bn_{4}|\ell_{3}-
|\bn_{1} \bn_{2} \bn_{3}|\ell_{4}.
\]
Applying $\frac{d}{dx}$ to this shows the $x$ coefficient is 
\[
|\bn_{2} \bn_{3} \bn_{4}|\bn_{11}-
|\bn_{1} \bn_{3} \bn_{4}|\bn_{21}+
|\bn_{1} \bn_{2} \bn_{4}|\bn_{31}-
|\bn_{1} \bn_{2} \bn_{3}|\bn_{41}.
\]
This is the determinant
of the matrix of the $\bn_{i}$ with a repeat row for the 
$x$ coordinates $\bn_{i1}$, so it vanishes. Reason similarly for
the $y$ and $z$ coefficients. 
\pagebreak

\noindent \txbf{Case 3:}
Assume without loss of generality  $i+1\neq j-1,$ $j+1\neq k-1,$  and
$k+1=i-1$.
In this case two coefficients vanish in Equation~\eqref{PCeqn} and
after evaluating at Wachspress coordinates we obtain,
\begin{eqnarray*}
w_{ijk}\circ w_d &=&|\bn_{i+1} \bn_{j+1}
\bn_{i-1}|\prod_{\sss{m\in\gamma(i+1,j+1,k+1)}}\ell_m - |\bn_{i+1}
\bn_{j+1} \bn_{k-1}|\prod_{\sss{m\in\gamma(i+1,j+1,k)}} \ell_m-\\
&&| \bn_{i+1} \bn_{j-1} \bn_{i-1}|\prod_{\sss{m\in\gamma(i+1,j,k+1)}}\ell_m +
|\bn_{i+1} \bn_{j-1} \bn_{k-1} |\prod_{\sss{m\in\gamma(i+1,j,k)}}\ell_m+\\
&&|\bn_{i-1} \bn_{j+1} \bn_{k-1}|\prod_{\sss{m\in\gamma(i,j+1,k)}}\ell_m\ \  -\
\ |
\bn_{i-1} \bn_{j-1} \bn_{k-1}|\prod_{\sss{m\in\gamma(i,j,k)}}\ell_m\\
&=&A^2\
\bigg(\prod_{\stackrel{m\in\gamma(i,j,k}{\scriptscriptstyle{i+1,j+1,k+1)}}}
\ell_m\bigg)\ \big(\ |\bn_{i+1} \bn_{j+1}
\bn_{i-1}|\ell_{j-1}\ell_{k-1}- \\
&& |\bn_{i+1}
\bn_{j+1} \bn_{k-1}|\ell_{i-1}\ell_{j-1}-| \bn_{i+1} \bn_{j-1}
\bn_{i-1}|\ell_{j+1}\ell_{k-1}+\\
&&|\bn_{i+1} \bn_{j-1} \bn_{k-1} |\ell_{j+1}\ell_{i-1}+|\bn_{i-1} \bn_{j+1}
\bn_{k-1}|\ell_{i+1}\ell_{j-1}-\\
&&|\bn_{i-1} \bn_{j-1} \bn_{k-1}|\ell_{i+1}\ell_{j+1}\big)
\end{eqnarray*}
The last factor is the difference of adjoints with respect to 
the triangulations of the pentagon in Figure~\ref{case3}. 
\begin{figure}[htp]
\centering
\psfrag{v1}{\Large$\bn_{i-1}$}
\psfrag{v2}{\Large$\bn_{j-1}$}
\psfrag{v3}{\Large$\bn_{i+1}$}
\psfrag{v4}{\Large$\bn_{k-1}$}
\psfrag{v5}{\Large$\bn_{j+1}$}
\includegraphics[scale=.5]{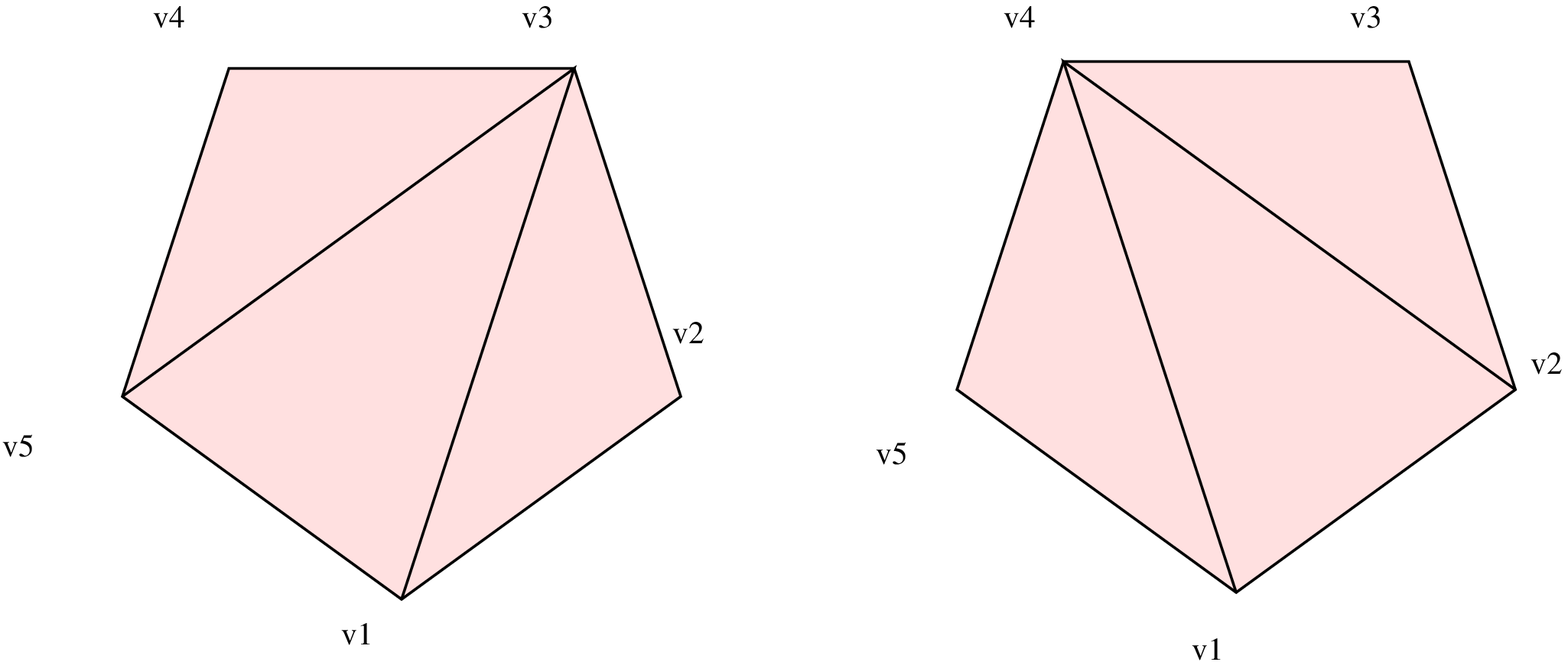}
%case3trang.eps: 595x842 pixel, 300dpi, 5.04x7.13 cm, bb=0 0 574 260
\caption{Case 3 triangulation}
\label{case3}
\end{figure}

\noindent\txbf{Case 4:} In this case evaluation at Wachspress coordinates yields
{\allowdisplaybreaks\begin{align*}
w_{ijk}\circ w_d&=
|\bn_{i+1} \bn_{j+1} \bn_{k+1}|\prod_{\sss{m\in\gamma(i+1,j+1,k+1)}}\ell_m - 
|\bn_{i+1} \bn_{j+1} \bn_{k-1}|\prod_{m\in\gamma(i+1,j+1,k)}\ell_m-\\
&|\bn_{i+1} \bn_{j-1} \bn_{k+1}|\prod_{m\in\gamma(i+1,j,k+1)}\ell_m + 
|\bn_{i+1} \bn_{j-1} \bn_{k-1}|\prod_{m\in\gamma(i+1,j,k)}\ell_m-\\
& |\bn_{i-1} \bn_{j+1} \bn_{k+1}|\prod_{m\in\gamma(i,j+1,k+1)}\ell_m + 
|\bn_{i-1} \bn_{j+1} \bn_{k-1}|\prod_{m\in\gamma(i,j+1,k)}\ell_m+\\
&|\bn_{i-1} \bn_{j-1} \bn_{k+1}|\prod_{m\in\gamma(i,j,k+1)}\ell_m- 
|\bn_{i-1} \bn_{j-1} \bn_{k-1}|\prod_{m\in\gamma(i,j,k)} \ell_m\\
&= A^2\ \bigg( \prod_{\stackrel{m\in\gamma(i,j,k}{\sss{i+1,j+1,k+1)}}}
\ell_m\bigg)
\big( |\bn_{i+1} \bn_{j+1} \bn_{k+1}|\ell_{i-1}\ell_{j-1}\ell_{k-1} -\\ 
&|\bn_{i+1} \bn_{j+1} \bn_{k-1}|\ell_{i-1}\ell_{j-1}\ell_{k+1}-
|\bn_{i+1} \bn_{j-1} \bn_{k+1}|\ell_{i-1}\ell_{j+1}\ell_{k-1} +\\
&|\bn_{i+1} \bn_{j-1} \bn_{k-1}|\ell_{i-1}\ell_{j+1}\ell_{k+1}-
|\bn_{i-1} \bn_{j+1} \bn_{k+1}|\ell_{i+1}\ell_{j-1}\ell_{k-1} + \\
&|\bn_{i-1} \bn_{j+1} \bn_{k-1}|\ell_{j+1}\ell_{i-1}\ell_{k+1}+
|\bn_{i-1} \bn_{j-1} \bn_{k+1}|\ell_{i+1}\ell_{j+1}\ell_{k-1}- \\
&|\bn_{i-1} \bn_{j-1} \bn_{k-1}| \ell_{i+1}\ell_{j+1}\ell_{k+1}\big)
\end{align*}}
The last factor is the difference of adjoints expressed using the
triangulations of the hexagon in Figure~\ref{case 4}.
\begin{figure}[htp]
 \centering
\psfrag{v1}{\Large$\bn_{i-1}$}
\psfrag{v2}{\Large$\bn_{j-1}$}
\psfrag{v3}{\Large$\bn_{i+1}$}
\psfrag{v4}{\Large$\bn_{k+1}$}
\psfrag{v5}{\Large$\bn_{j+1}$}
\psfrag{v6}{\Large$\bn_{k-1}$}
\includegraphics[scale=.5]{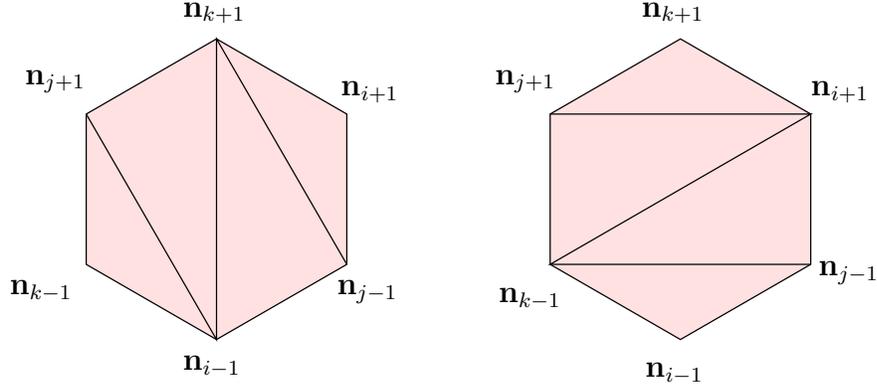}
 %case4trang.eps: 0x0 pixel, 300dpi, 0.00x0.00 cm, bb=0 0 620 272
\caption{Case 4 triangulation}
\label{case 4}
\end{figure}
This completes the analysis when $(i,j,k)$ is a $T$-triple.\\

Next, we consider the situation when $(i,j,k)$ contains a pair of 
consecutive indices. Suppose first that there are exactly two 
consecutive vertices; without loss of generality we assume the indices are 
$(2,3,i)$ with $i>4$.  We have 

\begin{align*}
 w_{2,3,i}:=|\Lambda_2\, \Lambda_3\, \Lambda_i|=&|\bn_2\,
\bn_4\,\bn_{i+1}|x_3x_4x_{i+1}-|\bn_3\, \bn_4\,\bn_{i-1}|x_3x_4x_{i}-\\
&|\bn_3\, \bn_2\,\bn_{i+1}|x_3x_3x_{i+1}+|\bn_3\,
\bn_2\,\bn_{i-1}|x_3x_3x_{i}-\\
&|\bn_1\, \bn_4\,\bn_{i+1}|x_2x_4x_{i+1}+|\bn_1\,
\bn_4\,\bn_{i-1}|x_2x_4x_{i}+\\
&|\bn_1\, \bn_2\,\bn_{i+1}|x_2x_3x_{i+1}-|\bn_1\, \bn_2\,\bn_{i-1}|x_2x_3x_{i}.
\end{align*}
We show $ w_{2,3,i}\circ w_d$ is a multiple of the difference between two
expressions of the adjoint polynomial of a polygon with respect to two
different triangulations. After evaluation at $w_d$ each monomial has a
common factor of $A\prod_{j\neq 2,3}\ell_j$. Thus

\begin{align*}
 \frac{w_{2,3,i}(w_d)}{A\prod_{j\neq 2,3}\ell_j}=&|\bn_2\,
\bn_4\,\bn_{i+1}|\prod_{j\neq 3,4,i+1}\ell_j-|\bn_3\,
\bn_4\,\bn_{i-1}|\prod_{j\neq
3,4,i-1}\ell_j-\\
&|\bn_3\, \bn_2\,\bn_{i+1}|\prod_{j\neq 2,3,i+1}\ell_j+|\bn_3\,
\bn_2\,\bn_{i-1}|\prod_{j\neq 2,3,i-1}\ell_j-\\
&|\bn_1\, \bn_4\,\bn_{i+1}|\prod_{j\neq 1,4,i+1}\ell_j+|\bn_1\,
\bn_4\,\bn_{i-1}|\prod_{j\neq 1,4,i-1}\ell_j+\\
&|\bn_1\, \bn_2\,\bn_{i+1}|\prod_{j\neq 1,2,i+1}\ell_j-|\bn_1\,
\bn_2\,\bn_{i-1}|\prod_{j\neq 1,2,i-1}\ell_j\\
=&\bigg(\prod_{\stackrel{j\in\gamma(2,4}{\sss{i,i+1)}}}
\ell_j\bigg)\cdot\\
&\bigg(|\bn_2\,\bn_4\,\bn_{i+1}|\ell_1\ell_3\ell_{i-1}-|\bn_3\,
\bn_4\,\bn_{i-1}|\ell_1\ell_2\ell_{i+1}-\\
&|\bn_3\, \bn_2\,\bn_{i+1}|\ell_1\ell_4\ell_{i-1}+|\bn_3\,
\bn_2\,\bn_{i-1}|\ell_1\ell_4\ell_{i+1}-\\
&|\bn_1\, \bn_4\,\bn_{i+1}|\ell_2\ell_3\ell_{i-1}+|\bn_1\,
\bn_4\,\bn_{i-1}|\ell_2\ell_3\ell_{i+1}+\\
&|\bn_1\, \bn_2\,\bn_{i+1}|\ell_3\ell_4\ell_{i-1}-|\bn_1\,
\bn_2\,\bn_{i-1}|\ell_3\ell_4\ell_{i+1}\bigg).
\end{align*}
The factor in parentheses is the difference of the adjoints computed with
respect to the triangulations of the polygon in Figure \ref{nonDelta}.
\begin{figure}[htp]
 \centering
\psfrag{v1}{\Large$\bn_{1}$}
\psfrag{v2}{\Large$\bn_{i+1}$}
\psfrag{v3}{\Large$\bn_{3}$}
\psfrag{v4}{\Large$\bn_{4}$}
\psfrag{v5}{\Large$\bn_{i-1}$}
\psfrag{v6}{\Large$\bn_{2}$}
\includegraphics[scale=.5]{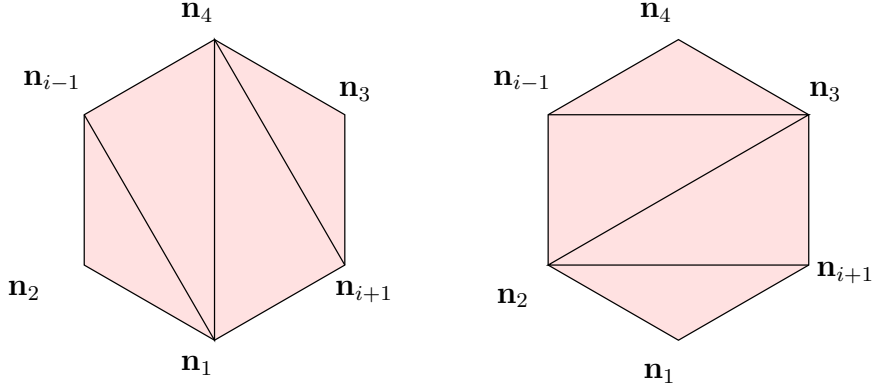}
 %case4trang.eps: 0x0 pixel, 300dpi, 0.00x0.00 cm, bb=0 0 620 272
\caption{Triangulations for the non-$T$-triples}
\label{nonDelta}
\end{figure}

Finally, for the case where the three vertices are consecutive, assume without
loss of generality the triple is $(2,3,4)$, and proceed as above. In 
this case, the triangulations which arise are those which appear in
Figure 5.
\end{proof}
\begin{defn}
$I(d)$ is the ideal generated by the Wachspress quadrics appearing in 
Corollary~\ref{inQuadrics} and the Wachspress cubics appearing in 
Theorem~\ref{cubicgens}.
\end{defn}

%%%%%%%%%%%%%%
%%%%%%%%%%%%%%%%%%
\pagebreak
\section{Gr\"obner basis, Stanley-Reisner ring, and free resolution}\label{sec:five}
In this section, we determine the initial ideal of $I(d)$ in graded lex order,
and prove $I(d) = I_{W_d}$. First, some preliminaries.
\subsection{Simplicial complexes and combinatorial commutative algebra}
An abstract $n$-simplex is a set consisting of all subsets of an 
$n+1$ element ground set. Typically a simplex is viewed as a geometric
object; for example a two-simplex on the set $\{a,b,c\}$ can be visualized
as a triangle, with the subset $\{a,b,c\}$ corresponding to the whole 
triangle, $\{a,b\}$ an edge, and $\{a\}$ a vertex. For this reason, 
elements of the ground set are called the vertices. 
\begin{defn}\cite{ziegler}
A simplicial complex $\Delta$ on a vertex set $V$ is a collection of subsets
$\sigma$ of $V$, such that if $\sigma \in \Delta$ and $\tau \subset \sigma$, 
then $\tau \in \Delta$. If $|\sigma| = i+1$ then $\sigma$ is called an $i-$face.
Let $f_i(\Delta)$ denote the number of $i$-faces of $\Delta$, and define 
$\dim(\Delta) = \max\{i \mid f_i(\Delta) \ne 0\}$. If $\dim(\Delta) = n-1$, 
we define $f_\Delta(t) = \sum_{i=0}^n f_{i-1}t^{n-i}$. The ordered list of 
coefficients of $f_\Delta(t)$ is the $f$-vector of $\Delta$, and the 
coefficients of $h_{\Delta}(t):=f_{\Delta}(t-1)$ are the $h$-vector of $\Delta$.
\end{defn}
\begin{exm}\label{1SkelTet}
Consider the one-skeleton of a tetrahedron, with vertices labelled
$\{x_1,x_2,x_3,x_4\}$, as below:
\begin{figure}[htp]
\begin{center}
\psfrag{x4}{$x_4$}
\psfrag{x3}{$x_3$}
\psfrag{x2}{$x_2$}
\psfrag{x1}{$x_1$}
\includegraphics[scale=.15]{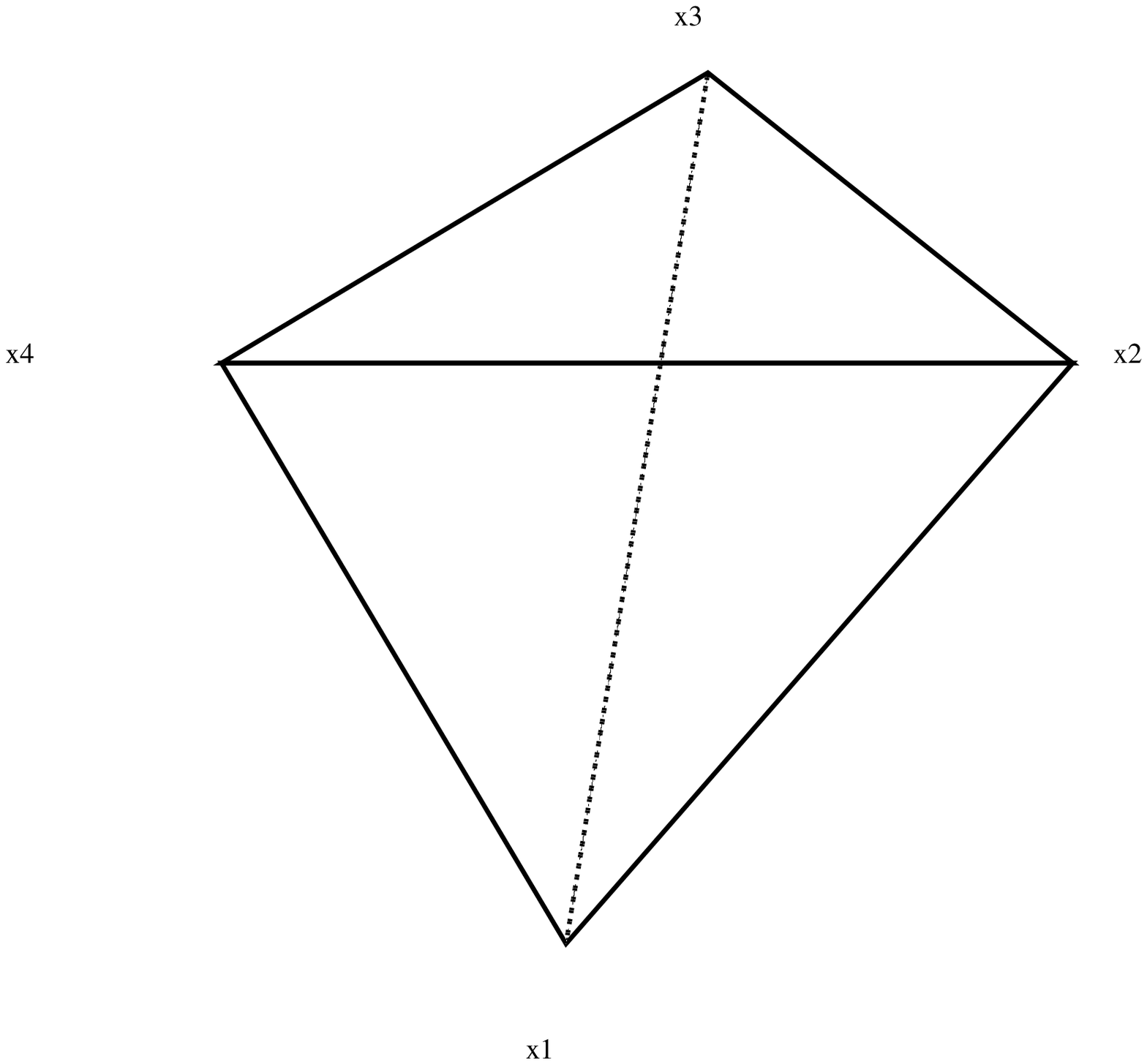}
% tetraa.eps: 0x0 pixel, 300dpi, 0.00x0.00 cm, bb=0 0 428 419
\caption{One skeleton of a three simplex}
\label{tetra}
\end{center}
\end{figure}
The corresponding simplicial complex $\Delta$ 
consists of all vertices and edges, so $\Delta = \{\emptyset, \{x_i\}, \{x_i, x_j\} \mid 1 \le i \le 4 \mbox{ and } i < j \le 4\}$. Thus, $f(\Delta) = (1,4,6)$ and $h(\Delta) = (1,2,3)$; 
the empty face gives $f_{-1}(\Delta) =1$.\end{exm}
A simplical complex $\Delta$ can be used to define a 
commutative ring, known as the Stanley-Reisner ring.
This construction allows us to use tools of commutative 
algebra to prove results about the topology or combinatorics of $\Delta$.
\begin{defn}
Let $\Delta$ be a simplicial complex on vertices $\{ x_1, \ldots,x_n \}$. 
The Stanley-Reisner ideal $I_\Delta$ is  
\[
I_\Delta = \langle x_{i_1}\cdots x_{i_j} \mid \{x_{i_1},\ldots,x_{i_j}\} \mbox{ is not a face of }\Delta \rangle \subseteq \kk[x_1,\ldots x_n],
\]
and the Stanley-Reisner ring is $\kk[x_1,\ldots x_k]/I_\Delta$.
\end{defn}
In Example~\ref{1SkelTet}, since $\Delta$ has no two-faces,
\[
I_\Delta = \langle x_1x_2x_3, x_1x_2x_4,x_1x_3x_4,x_2x_3x_4 \rangle
= \cap_{1\le i < j \le 4} \langle x_i, x_j \rangle.
\]
\begin{defn}\label{AssPrimes}
A prime ideal $P$ is associated
to a graded $S$-module $N$ if $P$ is the annihilator of some $n \in N$, and $\Ass(N)$ is the set  of all associated primes of $N$. 
\end{defn}
\begin{defn}\label{CMdef}
For a finitely generated graded $S$-module $N$, $\codim(N) = \min\{\codim(P) 
\mid P \in \Ass(N)\}$. The projective dimension $\pdim(N)$ is the length
of a minimal free resolution of $N$; $N$ is Cohen-Macaulay 
if $\codim(N) = \pdim(N)$. $S/I$ is arithmetically Cohen-Macaulay
if it is Cohen-Macaulay as an $S$-module.
%$\depth(N)$ is the maximal length of a sequence $\{s_1,s_2,\ldots$ where $s_i \%in \mathfrak{m}$ and $s_1$ is a nonzero divisor on $N$, $s_2$ is a nonzerodivis%or on $N/s_1N, \ldots$. 
\end{defn}
\subsection{Application to Wachpress surfaces}
\begin{defn}
Define $I_\Gamma(d) \subseteq \kk[x_1,\ldots,x_d]$ as
\[
I_\Gamma(d) = \langle x_1x_3,\ldots,x_1x_{d-1}\rangle + K_{2,d-1},
\]
where $K_{2,d-1}$ consists of all squarefree cubic monomials in $x_2,\ldots,x_{d-1}$.
\end{defn}
\begin{thm}\label{GG}
The quotient $S/I_\Gamma(d)$ is arithmetically Cohen-Macaulay, of Castelnuovo-Mumford regularity two, and has Hilbert series
\[
\HS(S/I_\Gamma(d),t) = \frac{1 + (d-3)t + {d-3 \choose 2}t^2}{(1-t)^3}.
\]
\end{thm}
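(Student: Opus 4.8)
The plan is to recognize $I_\Gamma(d)$ as a squarefree monomial ideal, hence as the Stanley--Reisner ideal $I_\Delta$ of a simplicial complex $\Delta$ on the vertex set $\{x_1,\dots,x_d\}$, and then to read off all three conclusions from the combinatorics of $\Delta$. The minimal nonfaces of $\Delta$ are the pairs $\{x_1,x_j\}$ for $3\le j\le d-1$ and the triples $\{x_i,x_j,x_k\}$ with $2\le i<j<k\le d-1$. The decisive observation is that the variable $x_d$ divides none of the generators of $I_\Gamma(d)$; equivalently, adjoining $x_d$ to any face produces a face, so every facet of $\Delta$ contains $x_d$ and $\Delta$ is the cone $x_d * B$ over the graph $B=\mathrm{lk}_\Delta(x_d)$. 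Here $B$ is the one-dimensional complex on $\{x_1,\dots,x_{d-1}\}$ given by the complete graph on $\{x_2,\dots,x_{d-1}\}$ together with the single pendant edge $\{x_1,x_2\}$. At the level of rings this says $S/I_\Gamma(d)\cong \kk[B][x_d]$, a polynomial extension of the Stanley--Reisner ring $\kk[B]$.

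For the Hilbert series I would first compute the $f$-vector of $B$: it has $f_0=d-1$ vertices and $f_1=\binom{d-2}{2}+1$ edges (the $\binom{d-2}{2}$ edges of the complete graph plus the pendant edge). Since $\dim B = 1$, the standard face-counting formula gives
\[
\HS(\kk[B],t)=\frac{(1-t)^2+f_0\,t(1-t)+f_1\,t^2}{(1-t)^2}=\frac{1+(d-3)t+\binom{d-3}{2}t^2}{(1-t)^2},
\]
where the last equality is the routine simplification $-2+f_0=d-3$ and $1-f_0+f_1=\binom{d-3}{2}$. Adjoining the free variable $x_d$ multiplies the series by $1/(1-t)$, yielding exactly $\HS(S/I_\Gamma(d),t)=\bigl(1+(d-3)t+\binom{d-3}{2}t^2\bigr)/(1-t)^3$.

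For Cohen--Macaulayness I would use that a polynomial extension preserves depth and dimension, so $S/I_\Gamma(d)$ is Cohen--Macaulay if and only if $\kk[B]$ is. For a one-dimensional complex (a graph) the Stanley--Reisner ring is Cohen--Macaulay precisely when the graph is connected (equivalently, $B$ is shellable: order the edges so each new edge meets the union of its predecessors), and $B$ is connected because $\{x_2,\dots,x_{d-1}\}$ spans a complete graph to which $x_1$ is attached through $x_2$. Hence $\kk[B]$, and therefore $S/I_\Gamma(d)$, is Cohen--Macaulay of Krull dimension $3$. Finally, for a Cohen--Macaulay graded algebra the Castelnuovo--Mumford regularity equals the degree of the $h$-polynomial (quotient by a maximal linear regular sequence leaves an Artinian ring whose Hilbert series is the $h$-polynomial, and regularity is unchanged); since $\binom{d-3}{2}\ne 0$ for $d\ge 5$, this degree is $2$, giving $\reg(S/I_\Gamma(d))=2$.

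The computations are all routine once the cone structure is identified, so the only genuine content is the structural observation that $x_d$ is a free variable together with the reduction of Cohen--Macaulayness to connectivity of the graph $B$; I expect no serious obstacle beyond correctly enumerating the nonfaces of $\Delta$. One caveat worth flagging is the degenerate case $d=4$, where $K_{2,d-1}$ is empty and $I_\Gamma(4)=\langle x_1x_3\rangle$ is a single quadric with $\reg=1$; the regularity-two assertion is the case $d\ge 5$, consistent with $W_4\cong\PP^1\times\PP^1$ being treated separately.
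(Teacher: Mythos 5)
Your proof is correct and takes essentially the same route as the paper's: both recognize $I_\Gamma(d)$ as the Stanley--Reisner ideal of a complex that is (a cone over) the connected graph consisting of the complete graph on $\{x_2,\ldots,x_{d-1}\}$ with the pendant edge $\{x_1,x_2\}$, deduce Cohen--Macaulayness from connectivity/shellability of graphs, and obtain the Hilbert series from the $f$-to-$h$ vector conversion with the free variable $x_d$ contributing the extra factor $\frac{1}{1-t}$. Your explicit Artinian-reduction argument for the regularity claim and your caveat about the degenerate case $d=4$ (where $K_{2,d-1}$ is empty and the regularity drops to one) are details the paper's proof leaves implicit, but the underlying approach is identical.
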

\begin{proof}
The ideal $I_\Gamma(d)$ is the Stanley-Reisner ideal of a one dimensional simplicial complex $\Gamma$ consisting of a complete graph on vertices $\{x_2, \ldots,x_{d-1}\}$, with a single additional edge $\overline{x_1x_2}$ attached. 
All connected graphs are shellable, so since 
shellable implies Cohen-Macaulay (see \cite{ms}), 
$S/I_\Gamma(d)$ is Cohen-Macaulay.
Since $I_\Gamma(d)$ contains no terms involving $x_d$, if $S'=\kk[x_1,\ldots,x_{d-1}]$, then
\[
S/I_\Gamma(d) \simeq S'/I_\Gamma(d) \otimes \kk[x_d]
\]
The Hilbert series of a Stanley-Reisner ring has numerator equal to 
the $h$-vector of the associated simplicial complex (see \cite{sch}), which in this 
case is a graph on $d-1$ vertices with ${d-2 \choose 2}+1$ edges. 
Converting $f(\Gamma) = (1,d-1,{d-2 \choose 2}+1)$ to $h(\Gamma)$ 
yields the Hilbert series of $S'/I_\Gamma(d)$. The Hilbert series of a 
graph has denominator $(1-t)^2$, and tensoring with $\kk[x_d]$ contributes a 
factor of $\frac{1}{1-t}$, yielding the result. 
\end{proof}

\begin{thm}\label{GBthm}
In graded lex order, $\In_{\prec}I(d) = I_\Gamma(d)$.
\end{thm}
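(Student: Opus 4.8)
The plan is to prove the two inclusions separately: I handle the containment $I_\Gamma(d) \subseteq \In_{\prec}I(d)$ by exhibiting explicit leading terms, and then upgrade to equality by a Hilbert function comparison combined with a saturation argument. For the first inclusion the quadratic generators $x_1x_3,\ldots,x_1x_{d-1}$ of $I_\Gamma(d)$ are already in hand: by Corollary~\ref{inQuadrics} these are exactly the leading terms of the Wachspress quadrics, which lie in $I(d)$ by definition. It therefore remains to realize every squarefree cubic $x_ax_bx_c$ with $2 \le a<b<c \le d-1$ as a leading term of an element of $I(d)$.

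First I would read these cubic leading terms directly off the Wachspress cubics. Given $2 \le a<b<c \le d-1$, consider $w_{a,b,c} = |\Lambda_a,\Lambda_b,\Lambda_c| \in I(d)$. Since $\Lambda_r = \frac{x_{r+1}}{\alpha_{r+1}}\bn_{r+1} - \frac{x_r}{\alpha_r}\bn_{r-1}$, every monomial of $w_{a,b,c}$ is a product with one factor from each of $\{x_a,x_{a+1}\}$, $\{x_b,x_{b+1}\}$, $\{x_c,x_{c+1}\}$, exactly as in Equation~\eqref{PCeqn}. In graded lex order the lex-largest such monomial is obtained by choosing the smallest index in each factor, namely $x_ax_bx_c$; this choice is moreover forced, since only $\Lambda_a$ can supply the variable $x_a$, after which $x_b$ must come from $\Lambda_b$ and $x_c$ from $\Lambda_c$. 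Its coefficient is thus a nonzero multiple of $|\bn_{a-1}\,\bn_{b-1}\,\bn_{c-1}|$, which is nonzero because no three of the lines $\V(\ell_i)$ are concurrent. Hence $\In_{\prec}(w_{a,b,c}) = x_ax_bx_c$, whether or not $(a,b,c)$ is a $T$-triple (consecutive indices only introduce lex-smaller monomials such as $x_b^2x_c$). This gives $K_{2,d-1} \subseteq \In_{\prec}I(d)$, and together with the quadrics establishes $I_\Gamma(d) \subseteq \In_{\prec}I(d)$.

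For the reverse inclusion I would argue at the level of Hilbert functions. Passing to the initial ideal is a flat degeneration, so $\HF(S/\In_{\prec}I(d)) = \HF(S/I(d))$. Since $I(d) \subseteq I_{W_d}$ (the quadrics span $(I_{W_d})_2$ and the cubics lie in $I_{W_d}$ by Theorem~\ref{cubicgens}) and $I_\Gamma(d) \subseteq \In_{\prec}I(d)$, these yield the sandwich $\HF(S/I_{W_d})_t \le \HF(S/I(d))_t = \HF(S/\In_{\prec}I(d))_t \le \HF(S/I_\Gamma(d))_t$ for every $t$. A direct computation shows that the Hilbert series of Theorem~\ref{GG} produces precisely the Hilbert polynomial $\HP(S/I_{W_d})$ of Equation~\eqref{HPisHP}, so the two outer terms agree for $t \gg 0$, forcing $\HF(S/\In_{\prec}I(d))_t = \HF(S/I_\Gamma(d))_t$ for $t \gg 0$. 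Combined with $I_\Gamma(d) \subseteq \In_{\prec}I(d)$, the two monomial ideals then agree in all large degrees, hence have the same saturation. Because $S/I_\Gamma(d)$ is Cohen-Macaulay of dimension three (Theorem~\ref{GG}) it has positive depth, so $\mathfrak{m} \notin \Ass(S/I_\Gamma(d))$ and $I_\Gamma(d)$ is already saturated; therefore $I_\Gamma(d) = (\In_{\prec}I(d))^{\mathrm{sat}} \supseteq \In_{\prec}I(d) \supseteq I_\Gamma(d)$, and equality follows.

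The main obstacle is not the containment, which reduces to inspecting the leading monomial of the determinant in Equation~\eqref{PCeqn}, but making the equality direction airtight. The delicate points are twofold: the sandwich only forces agreement of Hilbert functions in high degree, so one genuinely needs the Cohen-Macaulayness of $S/I_\Gamma(d)$ (equivalently $\mathfrak{m} \notin \Ass(S/I_\Gamma(d))$) to exclude a low-degree discrepancy via the saturation argument; and the sandwich itself rests on the prior, nontrivial input that every Wachspress cubic vanishes on $W_d$ (Theorem~\ref{cubicgens}), giving $I(d) \subseteq I_{W_d}$ and hence the lower bound on the Hilbert function.
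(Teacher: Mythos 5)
Your proposal is correct and follows essentially the same route as the paper: the containment $I_\Gamma(d) \subseteq \In_{\prec}I(d)$ via the leading terms of the Wachspress quadrics and cubics (you usefully spell out the determinant expansion showing $\In_{\prec}(w_{a,b,c}) = x_ax_bx_c$ with coefficient a nonzero multiple of $|\bn_{a-1}\,\bn_{b-1}\,\bn_{c-1}|$, which the paper only states as an observation), followed by the Hilbert polynomial sandwich between $S/I_{W_d}$ and $S/I_\Gamma(d)$ to get agreement in high degrees. Your closing step — equal saturations plus the fact that $I_\Gamma(d)$ is already saturated since $S/I_\Gamma(d)$ is Cohen--Macaulay of positive dimension — is just a repackaging of the paper's short-exact-sequence and associated-primes argument, as both hinge on the same fact that $\mathfrak{m} \notin \Ass(S/I_\Gamma(d))$.
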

\begin{proof}
First, note that 
\[
I_\Gamma(d) \subseteq \In_{\prec}I(d),
\]
which follows from Corollary~\ref{inQuadrics} and Theorem~\ref{cubicgens}, 
combined with the observation that in graded lex order, $\In(|\Lambda_i \Lambda_j \Lambda_k|) = x_ix_jx_k$ if $i<j<k$, as long as $k \ne d$. Since $I(d) \subseteq I_{W_d}$, there is a surjection
\[
S/I(d) \twoheadrightarrow S/I_{W_d},
\]
hence $HP(S/I(d),t) \ge HP(S/I_{W_d},t)$. Since 
\[
HP(S/I(d),t) = HP(S/\In_{\prec}I(d),t)
\]
and 
\[
I_\Gamma(d) \subseteq \In_{\prec}I(d)
\]
we have
\[
HP(S/I_\Gamma(d),t) \ge HP(S/\In_{\prec}I(d),t) = HP(S/I(d),t) \ge  HP(S/I_{W_d},t).
\]
The Hilbert polynomial $HP(S/I_{W_d},t)$ is given by Equation~\eqref{HPisHP}.
The  Hilbert series of $S/I_\Gamma(d)$ is given by Theorem~\ref{GG}, from
which we can extract the Hilbert polynomial:
\[
\HP(S/I_\Gamma(d),t) = {d-3 \choose 2}{t \choose 2} + (d-3){t+1 \choose 2} + {t+2 \choose 2},
\]
and a check shows this agrees with Equation~\eqref{HPisHP}.
Since $I_\Gamma(d) \subseteq \In_{\prec}I(d)$, equality of the Hilbert polynomials implies that in high degree (i.e. up to saturation) 
\[
I_\Gamma(d) = \In_{\prec}I(d) \mbox{ and } I(d) = I_{W_d}.
\]
Consider the short exact sequence
\[
0\longrightarrow \In_{\prec}I(d)/I_\Gamma(d) \longrightarrow S/I_\Gamma(d) \longrightarrow S/\In_{\prec}I(d) \longrightarrow 0.
\]
By Lemma 3.6 of \cite{eis1},
\begin{equation}
\label{AssP}
\Ass(\In_{\prec}I(d)/I_\Gamma(d)) \subseteq \Ass(S/I_\Gamma(d)).
\end{equation}
Since $HP(S/I_\Gamma(d),t) = HP(S/\In_{\prec}I(d),t)$, the module 
$\In_{\prec}I(d)/I_\Gamma(d)$ must vanish in high degree, so is 
supported at $\mathfrak{m}$, which is of codimension $d$. But 
$I_\Gamma(d)$ is a radical ideal supported in codimension $d-3$, 
so it follows from Equation~\eqref{AssP} that $\In_{\prec}I(d)/I_\Gamma(d)$ must vanish. 
\end{proof}
\begin{cor}\label{IisI}
The ideal $I(d)$ is the ideal of the image of 
\[
X_d \longrightarrow \mathbb{P}(H^0(D_{d-2})).
\]
In particular, $I(d) = I_{W_d}$, and $S/I(d)$ is arithmetically Cohen-Macaulay.
\end{cor}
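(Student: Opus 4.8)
The geometric input is already in place: in \S 2 we showed that $W_d$ is the image of $X_d$ under the morphism attached to $D_{d-2}$, so the ideal of that image is exactly $I_{W_d}$. Thus the real content of the corollary is the equality $I(d) = I_{W_d}$, after which arithmetic Cohen-Macaulayness is just the assertion that $S/I_{W_d}$ is a Cohen-Macaulay $S$-module. The plan is to leverage Theorem~\ref{GBthm} and Theorem~\ref{GG}. We already know $I(d) \subseteq I_{W_d}$ by construction, and the proof of Theorem~\ref{GBthm} shows that the two ideals agree up to saturation, with $I(d)^{\mathrm{sat}} = I_{W_d}$ because $I_{W_d}$ is prime and hence saturated. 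It therefore remains only to prove that $I(d)$ is itself saturated.

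To do this I would pass to the Gr\"obner degeneration. By Theorem~\ref{GBthm} we have $\In_{\prec}I(d) = I_\Gamma(d)$, and the passage from $I(d)$ to $\In_{\prec}I(d)$ is a flat degeneration preserving the Hilbert function. By Theorem~\ref{GG}, $S/I_\Gamma(d)$ is Cohen-Macaulay of Krull dimension three. Since projective dimension is upper semicontinuous along such a flat family, $\pdim(S/I(d)) \le \pdim(S/\In_{\prec}I(d)) = \pdim(S/I_\Gamma(d))$; by the Auslander-Buchsbaum formula this yields $\mathrm{depth}(S/I(d)) \ge \mathrm{depth}(S/I_\Gamma(d)) = 3$. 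Since $S/I(d)$ and $S/I_{W_d}$ share a Hilbert polynomial (Equation~\eqref{HPisHP}), we have $\dim(S/I(d)) = 3$, so $\mathrm{depth}(S/I(d)) = \dim(S/I(d)) = 3$ and $S/I(d)$ is Cohen-Macaulay.

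Finally I would convert the depth estimate into saturation. Because $\mathrm{depth}(S/I(d)) \ge 1$, the local cohomology module $H^0_{\mathfrak{m}}(S/I(d))$ vanishes; but this module is precisely $I(d)^{\mathrm{sat}}/I(d)$, so $I(d)$ is saturated. Combined with $I(d)^{\mathrm{sat}} = I_{W_d}$ this forces $I(d) = I_{W_d}$, which identifies $I(d)$ with the ideal of the image of $X_d \to \PP(H^0(D_{d-2}))$ and makes $S/I(d)$ arithmetically Cohen-Macaulay. Equivalently, one may phrase the last step through the short exact sequence $0 \to I_{W_d}/I(d) \to S/I(d) \to S/I_{W_d} \to 0$: the submodule $I_{W_d}/I(d)$ is supported only at $\mathfrak{m}$, whereas $S/I(d)$ has positive depth and hence no nonzero $\mathfrak{m}$-torsion, forcing $I_{W_d}/I(d) = 0$. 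The main obstacle is the middle step, namely justifying that Cohen-Macaulayness lifts from the initial ideal $I_\Gamma(d)$ to $I(d)$; this is exactly the semicontinuity of depth (equivalently of projective dimension) along the Gr\"obner flat family, and everything surrounding it is formal.
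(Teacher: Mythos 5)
Your proposal is correct and follows essentially the same route as the paper: both use $I(d)\subseteq I_{W_d}$ and equality up to saturation from Theorem~\ref{GBthm}, transfer Cohen-Macaulayness from $S/I_\Gamma(d)=S/\In_{\prec}I(d)$ to $S/I(d)$ by uppersemicontinuity along the Gr\"obner degeneration, and then kill the $\mathfrak{m}$-supported module $I_{W_d}/I(d)$ (equivalently $H^0_{\mathfrak{m}}(S/I(d))=I(d)^{\mathrm{sat}}/I(d)$) using positive depth, exactly as in the paper's short exact sequence argument. The only difference is that you spell out the semicontinuity step (Betti number bounds plus Auslander-Buchsbaum) that the paper compresses into a citation of \cite{her}, which is a welcome clarification rather than a different proof.
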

\begin{proof}
By the results of \S 2 and \S 3, $I(d) \subseteq I_{W_d}$, and the proof
of Theorem~\ref{GBthm} showed that they are equal up to saturation.
Hence, $I_{W_d}/I(d)$ is supported at $\mathfrak{m}$. Consider the short exact sequence
\[
0\longrightarrow I_{W_d}/I(d) \longrightarrow S/I(d) \longrightarrow S/(I_{W_d}) \longrightarrow 0.
\]
Since $S/I_\Gamma(d) = S/\In_{\prec}I(d)$ is arithmetically Cohen-Macaulay 
of codimension $d-3$, by uppersemicontinuity \cite{her} so is $S/I(d)$, 
so $I_{W_d}/I(d) =0$.
\end{proof}
\begin{cor}\label{CMreg}
The quotient $S/I_{W_d}$ has regularity two.
\end{cor}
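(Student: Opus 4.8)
The plan is to read off the regularity from two facts that are now available: the Cohen--Macaulay property of $S/I_{W_d}$ and its Hilbert series. By Corollary~\ref{IisI} the ring $S/I_{W_d} = S/I(d)$ is arithmetically Cohen--Macaulay. Since passing to an initial ideal preserves the Hilbert function, $S/I_{W_d}$ and $S/\In_{\prec}(I_{W_d})$ have the same Hilbert series; by Theorem~\ref{GBthm} and Corollary~\ref{IisI} we have $\In_{\prec}(I_{W_d}) = \In_{\prec}(I(d)) = I_\Gamma(d)$, so by Theorem~\ref{GG} this common Hilbert series is $\frac{1 + (d-3)t + {d-3 \choose 2}t^2}{(1-t)^3}$.

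The key input I would invoke is the standard fact that for a Cohen--Macaulay standard graded $\kk$-algebra $R$, the Castelnuovo--Mumford regularity equals the degree of the $h$-polynomial, that is, the degree of the numerator $h_R(t)$ when the Hilbert series is written as $h_R(t)/(1-t)^{\dim R}$ with $h_R(1) \ne 0$ (see \cite{eis1}). The clean way to see this is to reduce modulo a maximal regular sequence of general linear forms, which is possible after a harmless extension of $\kk$ that alters neither the regularity nor the Hilbert series; this yields an Artinian Cohen--Macaulay quotient whose socle lies in top degree $\deg h_R$, and such a reduction leaves the regularity unchanged. Applying this to $R = S/I_{W_d}$, where $\dim R = 3$ and $h_R(t) = 1 + (d-3)t + {d-3 \choose 2}t^2$ has degree exactly $2$ once $d > 4$ (so that ${d-3 \choose 2} \ne 0$), gives $\reg(S/I_{W_d}) = 2$.

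I expect the only genuine subtlety to be the invocation of ``regularity equals $\deg h_R$ for Cohen--Macaulay rings,'' together with the reduction to the Artinian case over a possibly finite field; the Hilbert series computation itself is immediate from Theorem~\ref{GG}, and one should flag that the degree-two claim requires $d > 4$, the case $d = 4$ being the quadric $\mathbb{P}^1 \times \mathbb{P}^1$ of regularity one. As an alternative route that sidesteps the full Artinian reduction, one can obtain the upper bound $\reg(S/I_{W_d}) \le \reg(S/\In_{\prec}(I_{W_d})) = \reg(S/I_\Gamma(d)) = 2$ directly, using that regularity does not increase when passing from an initial ideal back to the ideal, and then supply the matching lower bound $\reg(S/I_{W_d}) \ge 2$ from the presence of a minimal cubic generator (equivalently, from $\deg h_R = 2$), so that $\reg(S/I_{W_d}) = 2$.
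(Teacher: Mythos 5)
Your proposal is correct and essentially reproduces the paper's own proof: both use Cohen--Macaulayness (Corollary~\ref{IisI}) to reduce modulo a linear regular sequence of length three to an Artinian ring whose regularity equals its socle degree, which is read off as two from the Hilbert series of Theorem~\ref{GG} via Theorem~\ref{GBthm}. Your added caveat that $d=4$ must be excluded --- there ${d-3 \choose 2}=0$, the $h$-polynomial has degree one, and indeed the quadric $W_4$ has regularity one --- is a genuine refinement that the paper's proof silently overlooks.
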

\begin{proof}
Since $S/I(d)$ is Cohen-Macaulay, reducing modulo a linear regular sequence
of length three yields an Artinian ring with the same regularity, which is
equal to the socle degree \cite{eis}. By Theorem~\ref{GG} and Theorem~\ref{GBthm}, this is two, so the regularity of $S/I_{W_d}$ is two.
\end{proof}

\begin{thm}\label{BettiTable}
The nonzero graded betti numbers of the minimal free resolution of $S/I(d)$ are
given by $b_{12}=d-3$, and for $i\ge 3$ by  
\[
b_{i-2,i} = {d-3 \choose i} - (d-3){d-3 \choose i-1}+ {d-3 \choose 2} {d-3 \choose i-2}.
\] 
\end{thm}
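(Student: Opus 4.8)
The plan is to determine the shape of the betti table of $S/I(d)$ from soft constraints, and then read the individual numbers off the Hilbert series. Write the minimal free resolution as $F_\bullet$ with $F_i=\bigoplus_j S(-j)^{a_{i,j}}$, so the numbers to be computed are the $a_{i,j}$. Three inputs pin down most of the table at once. By Corollary~\ref{CMreg} the regularity of $S/I(d)$ is two, so $a_{i,j}=0$ whenever $j>i+2$. Because $I(d)$ is generated in degrees two and three, a minimal resolution raises the least generating degree by at least one at each step, so $a_{i,j}=0$ for $j\le i$ once $i\ge 1$; combined with the regularity bound this confines every nonzero $a_{i,j}$ with $i\ge1$ to the two rows $j=i+1$ and $j=i+2$. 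Finally Corollary~\ref{inQuadrics} gives $a_{1,2}=d-3$, the Wachspress quadrics. It remains to understand the middle row $j=i+1$ and the top row $j=i+2$.

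The heart of the argument is to show that the middle row is empty above the quadric generators, i.e. $a_{i,i+1}=0$ for all $i\ge 2$. The base case $a_{2,3}=0$ is precisely Corollary~\ref{noLsyz}: there are no linear first syzygies on the quadrics. I then propagate this up the strand using minimality. Suppose $a_{i,i+1}=0$ for some $i\ge 2$, and let $g$ be a minimal generator of $F_{i+1}$ of degree $i+2$. Its image under the differential lies in $\mathfrak{m}F_i$ and has degree $i+2$, so its component in each summand $S(-j)^{a_{i,j}}$ is given by forms of degree $(i+2)-j\ge 1$, forcing $j\le i+1$; but for $i\ge 1$ the only admissible values are $j=i+1$ and $j=i+2$, so $j=i+1$. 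Since $a_{i,i+1}=0$ by hypothesis, $g$ would map to zero, which is impossible in a minimal resolution. Hence $a_{i+1,i+2}=0$, and by induction the middle row vanishes for all $i\ge 2$. Conceptually this is the promised pruning: the initial ideal $I_\Gamma(d)=\In_{\prec}I(d)$ carries a full Koszul linear strand (its quadrics $x_1x_3,\dots,x_1x_{d-1}$ are $x_1$ times the regular sequence $x_3,\dots,x_{d-1}$), and Corollary~\ref{noLsyz} removes the first syzygies of that strand, which by the induction forces the whole strand to cancel when we pass from $\In_{\prec}I(d)$ to $I(d)$.

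With rows zero and one now known ($a_{0,0}=1$, $a_{1,2}=d-3$, and all other entries in those rows zero), the top row is forced numerically. By Theorems~\ref{GBthm} and~\ref{GG} the quotients $S/I(d)$ and $S/I_\Gamma(d)$ have the same Hilbert series, so the $K$-polynomial of $S/I(d)$ is
\[
\sum_{i,j}(-1)^i a_{i,j}\,t^j=(1-t)^d\,\HS(S/I(d),t)=\Big(1+(d-3)t+\binom{d-3}{2}t^2\Big)(1-t)^{d-3}.
\]
For each internal degree $m\ge 3$ the only nonzero summand on the left is $(-1)^{m-2}a_{m-2,m}$, because $a_{m,m}=0$ and $a_{m-1,m}=0$ fall in the already-cleared rows. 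Extracting the coefficient of $t^m$ on the right gives
\[
a_{m-2,m}=\binom{d-3}{m}-(d-3)\binom{d-3}{m-1}+\binom{d-3}{2}\binom{d-3}{m-2},
\]
which is the asserted formula (with $i=m$); the degrees $m=0,1,2$ reproduce $a_{0,0}=1$, the absence of linear syzygies, and $a_{1,2}=d-3$, so no information is lost. As a check, the formula vanishes for $m>d-1$, consistent with $\pdim(S/I(d))=d-3$.

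The main obstacle is exactly the middle step. Corollary~\ref{noLsyz} only directly yields $a_{2,3}=0$, while the theorem needs the entire higher linear strand $a_{i,i+1}$, $i\ge 2$, to vanish; equivalently, one must know that pruning the first linear syzygies off the resolution of $\In_{\prec}I(d)$ removes all of them. The regularity bound of Corollary~\ref{CMreg} is essential here: it is what prevents a degree-$(i+2)$ generator of $F_{i+1}$ from mapping into a degree-$(i+2)$ summand of $F_i$ and thereby escaping the induction, so that minimality alone propagates the vanishing all the way up the strand.
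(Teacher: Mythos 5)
Your proof is correct and takes essentially the same route as the paper: Corollary~\ref{CMreg} confines the betti table to two rows, Corollary~\ref{noLsyz} empties the linear strand, and the Hilbert series from Theorems~\ref{GG} and~\ref{GBthm} then forces the remaining row numerically. The only difference is one of completeness: you make explicit the minimality/Nakayama induction that propagates $a_{2,3}=0$ to $a_{i,i+1}=0$ for all $i\ge 2$, a step the paper's proof asserts without justification when it says the top row ``is empty, save for the quadratic generators.''
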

\begin{proof}
By Corollary~\ref{CMreg}, there are only two rows in the betti table 
of $S/I(d)$. By Corollary~\ref{noLsyz}, the top
row is empty, save for the quadratic generators at the first step.
Thus, the entire betti diagram may be obtained from the Hilbert
series, which is given in Theorem~\ref{GG}, and the result 
follows.
\end{proof}
\vskip .1in
\noindent We are at work on generalizing the results here to higher dimensions. 
\vskip .1in
\noindent{\bf Acknowledgments} Computations were performed using Macaulay2,
by Grayson and Stillman, available at: {\tt http://www.math.uiuc.edu/Macaulay2/}. A script is available at {\tt http://www.math.uiuc.edu/$\sim$schenck/Wpress.m2/}. 
Our collaboration began during a SIAM conference on applied algebraic geometry,
and we thank the organizers of that conference, Dan Bates and Frank Sottile. 
We thank Frank Sottile and Greg Smith for useful comments, and the
first author also thanks his thesis advisor Frank Sottile for introducing him to
Wachspress varieties and providing guidance during the dissertation process. 
\bibliographystyle{amsalpha}

\end{document}